\tikzstyle{every picture} = [scale=.45]
\tikzstyle{every node} = [draw, fill=white, circle, inner sep=0pt, minimum size=4pt]
\tikzstyle{d} = [very thick]
\tikzstyle{i} = [draw, fill=black, circle, inner sep=0pt, minimum size=4pt]
\tikzstyle{n} = [draw=none, rectangle, inner sep=2pt]
\let\cl\mathcal
\let\ct\mathsf
\let\le\leqslant
\let\m\mathbf
\let\nle\nleqslant
\let\subset\subseteq
\let\parmap\nrightarrow
\let\setminus\smallsetminus
\newcommand\category[1]{\expandafter\newcommand\csname#1\endcsname{\ct{#1}}}
\newcommand\nat{\xrightarrow{.}}
\newcommand\nein{\mathord{\sim}}
\newcommand\nbd[1]{\protect\nobreakdash#1\hspace{0pt}}
\newcommand\no{\mathord{-}}
\newcommand\op{\mathrm{op}}
\renewcommand\P{\mathcal{P}}
\newcommand\pair[1]{\langle#1\rangle}
\newcommand\SD{\mathrm{SD}}
\DeclareMathOperator\At{At}
\begin{document}

\title{Locally Integral Involutive PO-Semigroups}
\author{Jos\'e Gil-F\'erez\\
Chapman University, Orange CA 92866, USA\\
gilferez{@}chapman.edu
\and Peter Jipsen\\
Chapman University, Orange CA 92866, USA\\
jipsen{@}chapman.edu
\and Melissa Sugimoto\\
Chapman University, Orange CA 92866, USA\\
msugimoto{@}chapman.edu}
\date{\today}

\maketitle

\runninghead{J. Gil-F\'erez, P. Jipsen, M. Sugimoto}{Locally Integral IPO-Semigroups}

\vspace{-7ex}

\begin{abstract}
We show that every locally integral involutive partially ordered semigroup (ipo\nbd-semigroup) $\m A = (A,\le, \cdot, \nein,\no)$, and in particular every locally integral involutive semiring, decomposes in a unique way into a family $\{\m A_p : p\in A^+\}$ of integral ipo\nbd-monoids, which we call its \emph{integral components}. In the semiring case, the integral components are unital semirings. Moreover, we show that there is a family of monoid homomorphisms $\Phi = \{\varphi_{pq}\: \m A_p\to \m A_q : p\le q\}$, indexed over the positive cone $(A^+,\le)$, such that the structure of $\m A$ can be recovered as a glueing $\int_\Phi \m A_p$ of its integral components along $\Phi$. Reciprocally, we give necessary and sufficient conditions such that the P\l{}onka sum of any family of integral ipo\nbd-monoids $\{\m A_p : p\in D\}$, indexed over a join-semilattice $(D,\lor)$ along a family of monoid homomorphisms $\Phi$ is an ipo\nbd-semigroup.
\end{abstract}

\begin{keywords}
residuated lattices, involutive partially ordered semigroups, involutive semirings, P\l{}onka sums, Frobenius quantales.
\end{keywords}

\section{Introduction}

\emph{Idempotent semirings} are algebras of the form $(A,\lor,\cdot)$ where $(A,\lor)$ is a semilattice (with order $x\le y \iff x\lor y=y$), $(A,\cdot)$ is a semigroup, and the semigroup operation distributes over the join. They play an important role in mathematics, logic, and theoretical computer science, since they generalize distributive lattices and, if they have an identity element, they expand to Kleene algebras and residuated lattices. An \emph{involutive semiring} is an idempotent semiring with two antitone operations $\nein$ and $\no$ satisfying $\nein\no x=x=\no\nein x$ and
\[
x\cdot y\le z\iff y\cdot\nein z\le \nein x \iff \no z\cdot x\le \no y.
\]
These algebras are term-equivalent to identity-free involutive residuated lattices and, in the case that the lattice is complete, to Frobenius quantales (see~\cite{GaJiKoHi07} and~\cite{EGGHK18}). Furthermore, algebras of binary relations are involutive semirings under the operations of union, composition, and complement-converse. The structural characterization obtained in this paper is valid for more general partially ordered structures called \emph{involutive po-semigroups} (\emph{ipo\nbd-semigroups}) where the semilattice $(A,\lor)$ is replaced by a poset $(A,\le)$. In these structures, the product is residuated and its residuals $\backslash$ and $/$ are term-definable. Prominent examples of ipo\nbd-semigroups are (1-free reducts of) groups (where the poset is an antichain) and partially ordered groups (in both cases $\nein x=-x=x^{-1}$).

An ipo\nbd-semigroup has a \emph{global identity} if there is an element $1$ satisfying $x\cdot 1 = x = 1\cdot x$, which implies the inequality $y\le (x/x)y$. An ipo\nbd-semigroup is \emph{integral} when it possesses a global identity which is moreover the top element, that is, $x\le 1$. Integrality is an important property for residuated lattices, since it is equivalent to the proof-theoretical rule called weakening. Every integral ipo\nbd-semigroup is \emph{integrally closed} (see~\cite{GFLaMe20}), namely, $x\backslash x = 1 = x/x$, and satisfies $x\backslash 1 = 1$. In this work we investigate a much larger class of structures, which we call locally integral ipo\nbd-semigroups. A \emph{balanced} ipo\nbd-semigroup, (i.e., one satisfying $1_x:= x\backslash x = x/x$) is \emph{locally integral} if it satisfies $y\le 1_x y$ (and in particular, $1_x\cdot x = x$), $x\le 1_x$, and $x\backslash 1_x = 1_x$. For this reason, we call the elements of the form $1_x$ \emph{local identities}. An integral ipo\nbd-semigroup is a locally integral one satisfying $1_x = 1_y$.

The main result in this paper is that every locally integral ipo\nbd-semigroup $\m A$ can be decomposed in a unique way into a family of integral ipo\nbd-monoids $\{\m A_p : p\in A^+\}$, indexed on the set $A^+ = \{{p\in A} : \text{for every }x,\ x\le px\}$ of \emph{positive}\footnote{~In an ipo\nbd-semigroup with a global identity $1$, the positive elements coincide with the elements greater than 1.} elements of $\m A$, which we call its \emph{integral components}. Two locally integral ipo\nbd-semigroups can have the same integral components, but may differ in the way these components are \emph{glued} together. We find in the literature similar situations in which a number of structures are glued together to form a new one: for instance, in~\cite{GFJiMe20} it is described how chains can be attached to an odd Sugihara monoid in order to form a commutative idempotent residuated chain, and in~\cite{JiTuVa21} how Boolean algebras can be glued together to form commutative idempotent involutive residuated lattices.

In our present case, we associate to every locally integral ipo\nbd-semigroup $\m A$ the join-semilattice $\m A^+ = (A^+,\cdot)$ and a family of monoid homomorphisms $\Phi = \{\varphi_{pq}\: {\m A_p\to \m A_q} : p\le q \text{ in } A^+\}$ between its integral components such that the structure of $\m A$ can be completely recovered as an aggregate or \emph{glueing} $\int_\Phi \m A_p$ of these integral components along~$\Phi$ in two stages: first, the semigroup part of $\m A$ turns out to be the P\l{}onka sum of the family~$\Phi$, and the involutive negations can be defined componentwise. Then, we recover the order of $\m A$ using the product, the negations, and the local identities.

As an application of our results, we can combine certain semantics for fuzzy logics with semantics for relevance logic using, for example, the well-understood MV-algebras as building blocks of a glueing. We also exploit this decomposition in order to prove that several properties of locally integral ipo\nbd-semigroups are \emph{local}, in that a locally integral ipo\nbd-semigroup satisfies them if and only if all its integral components satisfy them. One of the most significant local properties established here is local finiteness.

Previous research into the structure of doubly-idempotent semirings can be found in~\cite{AlJi20,AJS21}. The structure of all finite commutative idempotent involutive residuated lattices is completely described in~\cite{JiTuVa21} in a step-by-step decomposition. In the current paper, this is significantly generalized to all locally integral ipo\nbd-semigroups, without any restrictions regarding finiteness, commutativity, or full idempotence. Similar results were originally proved for the special case of locally integral ipo\nbd-monoids \cite{GFJiLo23}. In the present form they are valid for locally integral Frobenius quantales (without a unit). We also characterize the ipo\nbd-semigroups that can be embedded into ipo\nbd-monoids. A further use of P\l{}onka sums can be found in~\cite{Je22}, where the structure of even and odd involutive commutative residuated chains is studied.

We set the terminology and notation in Section~2, and describe the fundamental properties of ipo\nbd-semigroups needed in the rest of the paper. In Section~3, we introduce the class of locally integral ipo\nbd-semigroups and show that every locally integral ipo\nbd-semigroup is the glueing of its integral components. Finally, in Section~4, we solve the reverse problem, that is, we provide necessary and sufficient conditions so that the glueing of a system of integral ipo\nbd-monoids is an ipo\nbd-semigroup. In the final section we consider the category of idempotent ipo\nbd-semigroups and describe its duality with a subcategory of semilattice directed systems of sets and partial maps.

\section{Involutive partially ordered semigroups and semirings}
\label{sec:ipo-semigroups}

An \emph{involutive partially ordered semigroup}, or \emph{ipo\nbd-semigroup} for short, is a structure of the form $\m A = (A,\le,\cdot,\nein,\no)$ such that $(A,\le)$ is a partially ordered set, $(A,\cdot)$ is a semigroup with two \emph{antitone} (i.e., order-reversing) unary operations $\nein$ and $\no$ satisfying
\emph{double negation} (dn)  $\nein\no x = x = \no\nein x$ and \emph{rotation}:
\[
x\cdot y \le z \iff y\cdot\nein z \le \nein x \iff \no z\cdot x\le \no y. \tag{rot}
\]
This is then a po-variety, in the sense of~\cite{Pi04}, namely, a class of partially ordered algebras that satisfy a set of equations and inequations. The unary operations $\nein$ and $\no$, which are allowed to coincide, are called \emph{involutive negations}. Indeed, we call $\m A$ \emph{cyclic} if it satisfies $\nein x = \no x$. An element $x$ of $\m A$ is \emph{central} if $x\cdot y = y \cdot x$ for any other $y\in A$, and $\m A$ is \emph{commutative} if all its elements are central. An element $x$ of $\m A$ is \emph{idempotent} if $x\cdot x = x$, and $\m A$ is \emph{idempotent} if all its elements are idempotent. From now on, if no confusion is likely, we will write $xy$ instead of $x\cdot y$.

We can readily check that, in the presence of antitonicity and double negation, rotation is equivalent to the following property of \emph{residuation}:
\[
xy\le z \iff x \le \no(y\cdot\nein z) \iff y \le \nein(\no z\cdot x). \tag{res}
\]
Thus, the multiplication of every ipo\nbd-semigroup is residuated in both arguments, with \emph{left} and \emph{right residuals} given by $z/y = \no(y\cdot\nein z)$ and $x\backslash z = \nein (\no z\cdot x)$, respectively. With this notation, (res) can be rewritten as follows:
\[
xy \le z \iff x \le z/y \iff y \le x\backslash z.
\]
An immediate consequence of this observation is that multiplication preserves arbitrary existing joins and is therefore order-preserving in both arguments. Also, an ipo\nbd-semigroup is commutative if and only if it satisfies $x\backslash y = y/x$.

\begin{lemma}\label{lem:other:properties:ipo-mon}\label{lem:part:swap:negations}
Every ipo\nbd-semigroup satisfies the following \emph{laws of contraposition}:
\[
y/x = \no y\backslash \no x\quad \text{and}\quad x\backslash y = \nein x/\nein y.
\]
\end{lemma}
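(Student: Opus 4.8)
The plan is to work entirely through the adjunction (res) rather than unwinding the explicit definitions $z/y = \no(y\cdot\nein z)$ and $x\backslash z = \nein(\no z\cdot x)$. Substituting negated arguments directly into those formulas produces awkward doubly-negated terms such as $\no\no x$, which do not collapse under double negation and obscure the computation. Instead I would characterize each residual by its universal property and reduce both identities to two of the three clauses of (rot). The preliminary observation I would record is that in a poset two elements coincide as soon as they have the same principal down-set: if $w \le a \iff w \le b$ holds for every $w$, then taking $w=a$ and then $w=b$ and invoking antisymmetry of $\le$ gives $a=b$. This is the only use of antisymmetry, and it licenses proving an equation by testing both sides against an arbitrary element $w$.

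For the first identity I would fix an arbitrary $w$ and assemble the chain
\[
w \le y/x \iff wx \le y \iff \no y\cdot w \le \no x \iff w \le \no y\backslash\no x,
\]
in which the two outer equivalences are instances of (res) and the middle one is the equivalence between the first and third clauses of (rot), read off under the substitution $(x,y,z):=(w,x,y)$. As $w$ was arbitrary, the down-set criterion yields $y/x = \no y\backslash\no x$. The second identity is entirely symmetric: fixing $w$, I would write
\[
w \le x\backslash y \iff xw \le y \iff w\cdot\nein y \le \nein x \iff w \le \nein x/\nein y,
\]
now taking the middle step from the equivalence between the first and second clauses of (rot) under $(x,y,z):=(x,w,y)$.

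I do not anticipate a genuine obstacle; the argument is bookkeeping that matches each residuation step to the correct clause of rotation. The only point demanding care is aligning the variable substitutions so that the negations land on the intended arguments, and this is precisely where the naive computational approach would bog down. Routing everything through (res) and (rot) keeps the negations implicit until the very last step and thereby sidesteps the doubly-negated terms altogether.
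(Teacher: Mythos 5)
Your proof is correct and follows essentially the same route as the paper: both establish $y/x = \no y\backslash\no x$ by testing an arbitrary element against each side via (res) and the appropriate clause of (rot). The only (inessential) difference is that the paper obtains the second identity from the first by double negation, whereas you rerun the symmetric chain of equivalences directly; both are valid one-line arguments.
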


\begin{proof}    
For any arbitrary $z$ we have the equivalences
\[
z \le y/x \iff zx \le y \iff \no y z \le \no x \iff z \le \no y\backslash \no x,
\]
by residuation and rotation, and therefore the first equation follows. The second equation is a consequence of the first one and double negation.
\end{proof}

Using the involutive negations and the multiplication, we can define a binary operation $+$ on $A$ in two different ways: $x+y = \nein(\no x\cdot\no y)$ and $x+y = \no(\nein x\cdot \nein y)$. The previous lemma guarantees that both definitions agree, since
\[
\nein(\no x\cdot \no y) = \no y\backslash x = y/\nein x = \no(\nein x\cdot\nein y).
\]

An element $p$ of an ipo\nbd-semigroup $\m A$ is \emph{positive} if for all $x\in A$, $x\le px$, or equivalently, for all $x\in A$, $x\le xp$. Indeed, suppose that for all $x\in A$, the element $p$ satisfies $x\le px$. In particular, $\nein(xp) \le p\cdot\nein(xp)$, whence we obtain that
\[
x\le (xp)/p = \no(p\cdot\nein(xp)) \le \no\nein(xp) = xp,
\]
by residuation, antitonicity, and double negation. The other implication is analogous. Notice that this also implies both that $p\backslash x \le x$ and $x/p \le x$. We denote by $A^+$ the set of all positive elements of $\m A$, which one can readily prove is closed upwards, that is, if $p\in A^+$ and $p\le q$, then $q\in A^+$.

An element 1 of an ipo\nbd-semigroup $\m A$ is a \emph{global identity} if $1\cdot x = x$, for all $x\in A$. In particular, a global identity is always positive, and therefore $x\le x\cdot 1$. At the same time, $1\cdot\nein x = \nein x$ implies that $1\le \nein x/\nein x = x\backslash x$ and hence $x\cdot 1 \le x$, by residuation and contraposition. Thus, global identities also satisfy $x\cdot 1 = x$, for all $x\in A$, and therefore an ipo\nbd-semigroup cannot have more than one global identity. In that case, the structure $\widehat{\m A} = (A,\le,\cdot, 1, \nein, \no)$ is an ipo\nbd-monoid as defined in~\cite{GFJiLo23} and the set $A^+$ of positive elements of $\m A$ coincides with the \emph{positive cone} $\{p\in A : 1\le p\}$ of $\widehat{\m A}$. Indeed, an \emph{ipo\nbd-monoid} is a structure $(A,\le, \cdot, 1,\nein, \no)$ consisting of a poset $(A,\le)$, a monoid $(A,\cdot, 1)$, and two unary operations $\nein$ and $\no$ satisfying the following condition:
\[
x\cdot\nein y\le 0 \iff x\le y \iff \no y\cdot x \le 0, \tag{ineg}
\]
where $0=\no 1$. Then, if $\m A$ is an ipo\nbd-semigroup with a global identity $1$, we have that $(A,\le)$ is a poset and $(A,\cdot, 1)$ is a monoid. Also, $1\cdot\no 1\le \no 1$ implies $\no 1 = \no 1\cdot 1 = \no1\cdot\nein\no 1 \le\nein 1$, by~(dn) and~(rot). Analogously, $\nein 1\le \no 1$, and therefore $\no 1 = \nein 1$. Hence, rotation and the fact that $1\cdot x = x = x\cdot 1$ imply~(ineg). On the other hand, the 1-free reduct of any ipo\nbd-monoid is an ipo\nbd-semigroup.

\begin{lemma}
If $(A,\le,\cdot, 1, \nein, \no)$ is an ipo\nbd-monoid, then $(A,\le,\cdot,\nein,\no)$ is an ipo\nbd-semigroup.    
\end{lemma}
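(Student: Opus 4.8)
The only content is to recover, from the single axiom~(ineg), the antitonicity of $\nein$ and $\no$ together with~(dn) and~(rot); that $(A,\le)$ is a poset and $(A,\cdot)$ a semigroup is simply inherited from the ipo\nbd-monoid. The plan is to extract two elementary inequalities directly from~(ineg), promote them to full double negation, and then read off antitonicity and rotation almost mechanically.

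First I would instantiate $x=y$ in the two equivalences of~(ineg). Since $y\le y$ always holds, this yields $y\cdot\nein y\le 0$ and $\no y\cdot y\le 0$ for every $y$. Substituting $y:=\no x$ into the first of these and $y:=\nein x$ into the second, and then reading the corresponding equivalence of~(ineg) in the reverse direction, produces the two ``halves'' $\nein\no x\le x$ and $\no\nein x\le x$. These are the workhorses of the argument, and the only delicate point is to upgrade them to equalities.

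The main step is therefore the reverse inequalities $x\le\nein\no x$ and $x\le\no\nein x$, and this is where I expect the only genuine obstacle, since it forces one to use that multiplication is order\nbd-preserving rather than a pure manipulation of~(ineg). For $x\le\no\nein x$, the left equivalence of~(ineg) reduces the goal to $x\cdot\nein\no\nein x\le 0$; applying the workhorse $\nein\no\nein x\le\nein x$ (the first inequality instantiated at $\nein x$), monotonicity of $\cdot$, and the basic inequality $x\cdot\nein x\le 0$ closes it. The inequality $x\le\nein\no x$ is symmetric, via the right equivalence of~(ineg), the workhorse $\no\nein\no x\le\no x$, and $\no x\cdot x\le 0$. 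Together with the two halves this yields~(dn): $\nein\no x=x=\no\nein x$.

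With~(dn) available the rest is immediate and needs only~(ineg) and associativity. For antitonicity, $a\le b$ gives $\no b\cdot a\le 0$ by~(ineg), and rewriting $a=\nein\no a$ turns this into $\no b\cdot\nein\no a\le 0$, i.e.\ $\no b\le\no a$; the case of $\nein$ is dual. For~(rot), starting from $xy\le z$ I would pass to $x\cdot(y\cdot\nein z)\le 0$ using~(ineg) and associativity, and then rewrite $x=\no\nein x$ to obtain $y\cdot\nein z\le\nein x$ again by~(ineg); the remaining equivalence with $\no z\cdot x\le\no y$ is obtained symmetrically after rewriting $y=\nein\no y$. Since each step is an equivalence, this establishes~(rot) and completes the proof that the reduct is an ipo\nbd-semigroup.
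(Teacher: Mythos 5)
Your overall plan coincides with the paper's: extract $x\cdot\nein x\le 0$ and $\no x\cdot x\le 0$ from~(ineg), derive the two halves $\nein\no x\le x$ and $\no\nein x\le x$, establish~(dn), and then obtain antitonicity and rotation by pure (ineg)/associativity manipulations (your last paragraph is correct and essentially identical to the paper's). The gap is exactly at the point you yourself flag as delicate: you invoke monotonicity of $\cdot$ to pass from $\nein\no\nein x\le\nein x$ and $x\cdot\nein x\le 0$ to $x\cdot\nein\no\nein x\le 0$ (and symmetrically for the other half). But an ipo\nbd-monoid is defined here as only a poset plus a monoid plus~(ineg); order-preservation of the multiplication is \emph{not} assumed, and in this paper it is obtained as a consequence of residuation, which in turn rests on~(dn) and rotation. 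Using monotonicity to prove~(dn) is therefore circular, and it does not appear to be derivable from~(ineg) alone beforehand: to convert an inequality $u\cdot a\le 0$ into an order statement via~(ineg), the factor $a$ (resp.\ $u$) must be literally of the form $\nein y$ (resp.\ $\no y$), which is exactly what~(dn) would guarantee.

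The paper avoids this by a purely syntactic detour. From $\no\nein x\le x$ it gets $\no\nein\no x\le\no x$, and from $\nein\no x\le x$ (applied twice, with transitivity) it gets $\nein\no\nein\no x\le x$, hence $\no x\cdot\nein\no\nein\no x\le 0$ by~(ineg); since this product is literally of the form $u\cdot\nein v$ with $v=\no\nein\no x$, another application of~(ineg) yields $\no x\le\no\nein\no x$, and so the \emph{equality} $\no\nein\no x=\no x$. Substituting this equality of elements into $\no x\cdot x\le 0$ gives $\no\nein\no x\cdot x\le 0$, i.e.\ $x\le\nein\no x$, with no appeal to monotonicity; the other half of~(dn) then follows similarly. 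Your argument can be repaired by replacing the monotonicity step with this triple-negation computation; as written, it is incomplete.
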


\begin{proof}
By definition, $(A,\le)$ is a poset and $(A,\cdot)$ is a semigroup. First of all, notice that (ineg) applied to $x\le x$ implies that $\no x\cdot x \le 0$ as well as $x\cdot\nein x\le 0$. In particular, $\no\nein x\cdot \nein x\le 0$ and $\no x\cdot \nein\no x\le 0$, which in turn imply that $\no\nein x\le x$ and $\nein\no x \le x$, respectively. From these inequations we obtain that $\no\nein\no x\le \no x$, from the first one, and that $\nein\no\nein\no x \le x$, from the second one, and then $\no x\cdot\nein\no\nein\no x \le 0$, and then $\no x \le \no\nein\no x$. That is, $\no\nein\no x = \no x$. In turn, this implies that $\no\nein\no x\cdot x = \no x\cdot x \le 0$, and therefore $x\le \nein\no x$. Thus, we have shown that $\nein\no x = x$. In particular $\nein\no\nein x = \nein x$, and hence $x\cdot\nein\no\nein x = x\cdot\nein x \le 0$, and then $x \le \no\nein x$. Thus, $\no\nein x = x$. This shows that $\nein$ and $\no$ satisfy double negation.

Now, using~(ineg), associativity, double negation, and~(ineg) again, we have:
\[
    xy \le z \iff (xy)\cdot\nein z \le 0
    \iff x(y\cdot\nein z) \le 0
    \iff \no\nein x(y\cdot\nein z) \le 0
    \iff y\cdot\nein z \le \nein x.
\]
And analogously, $xy\le z \iff \no z\cdot x\le \no y$. That is, the structure satisfies rotation. Finally, using rotation and the fact that $1$ is the identity of $\cdot$ we obtain that
\[
x \le y \iff x\cdot 1 \le y \iff 1\cdot\nein y \le \nein x \iff \nein y \le \nein x.
\]
And analogously, $x\le y \iff \no y \le \no x$. That is, the operations $\nein$ and $\no$ are antitone.
\end{proof}

We say that $\m A$ is \emph{integral} if it possesses a global identity $1$ which is also the top element of the order of $\m A$. The confluence of these two properties on $1$ imposes a number of restrictions on $\m A$. For instance, since $1\cdot x = x = x\cdot 1$, we obtain by residuation that $1 \le x/x$ and $1\le x\backslash x$, and since $1$ is the top element, we deduce that $x\backslash x = 1 = x/x$. Also, since $1\cdot x \le 1$ and $x\cdot 1\le 1$, we also have that $1 \le 1/x$ and $1 \le x\backslash 1$, and therefore $x\backslash 1 = 1 = 1/x$. The next lemma characterizes the integral ipo\nbd-semigroups as the ones satisfying two particular inequations. As a consequence, we obtain that the class of integral ipo\nbd-semigroups is a po-subvariety of the po-variety of ipo\nbd-semigroups, in the sense of~\cite{Pi04}. 

\begin{proposition}
An ipo\nbd-semigroup $\m A$ is integral if and only if it satisfies $x\le (x/x)x$ and $yx\le x$.
\end{proposition}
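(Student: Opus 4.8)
The plan is to treat the two implications separately; the forward direction is a short consequence of the observations made earlier in this section, while the reverse direction carries the real content.

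For the forward direction, assume $\m A$ is integral, so it has a global identity $1$ that is also the top element. The discussion preceding the statement already records that $x/x = 1$ under integrality, whence $(x/x)x = 1\cdot x = x$ and the first inequation holds (with equality). For the second inequation, from $y\le 1$ (as $1$ is the top) together with the fact that multiplication is order-preserving in both arguments, we obtain $yx\le 1\cdot x = x$.

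The reverse direction is where the work lies, and the key move is to read the hypothesis $yx\le x$ through residuation. By (res), $yx\le x$ is equivalent to $y\le x/x$; so assuming $yx\le x$ for all $y$ and all $x$ tells us that, for each fixed $x$, the element $x/x$ is an upper bound of the entire universe $A$. Since $x/x$ is itself an element of $A$, it must be the greatest element of $\m A$, and as the greatest element is unique, $x/x$ does not in fact depend on $x$; I would denote this common top element by $1$. It then remains only to check that $1$ is a global identity: instantiating $yx\le x$ at $y = 1 = x/x$ gives $(x/x)x\le x$, while the first inequation $x\le (x/x)x$ supplies the reverse inequality, so $1\cdot x = (x/x)x = x$ for every $x$. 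Thus $1$ satisfies the defining condition of a global identity, and I would invoke the remark made earlier (just after the definition of global identity) that any element with $1\cdot x = x$ for all $x$ is automatically a two-sided identity. Since $1$ is simultaneously the greatest element and a global identity, $\m A$ is integral by definition.

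The one genuinely delicate point is recognizing that the two hypotheses play complementary and mutually indispensable roles: $yx\le x$ collapses all the elements $x/x$ into a single top element $1$ via residuation, whereas $x\le(x/x)x$ is precisely what forces this top element to act as a left identity rather than merely a largest element. Neither inequation alone suffices, so the argument must use both; everything else is routine bookkeeping with the residuation apparatus already set up in this section.
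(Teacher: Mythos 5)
Your proof is correct and follows essentially the same route as the paper's: the forward direction reads off $x/x=1$ from integrality, and the reverse direction uses residuation to identify $x/x$ as the top element and then combines $x\le(x/x)x$ with $(x/x)x\le x$ to get the global identity. The only cosmetic difference is that you obtain $(x/x)x\le x$ by instantiating the hypothesis $yx\le x$ at $y=x/x$, whereas the paper cites the general residuation inequality $(x/x)x\le x$; both are immediate.
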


\begin{shortproof}
If $\m A$ is integral and $1$ is the global identity and top element of $\m A$, then for all $x\in A$, $1 = x/x$. In particular $x = 1\cdot x = (x/x)x$. Also, $yx\le 1\cdot x = x$. In the other direction, if $\m A$ satisfies $yx\le x$, by residuation we have that $y\le x/x$, that is, $x/x$ is the top element of $\m A$. Let's call this element $1$. Since by residuation we always have that $(x/x)x\le x$, we deduce that if in addition $\m A$ satisfies $x\le (x/x)x$, then we have that $1\cdot x = (x/x)x = x$, and therefore $\m A$ is integral.
\end{shortproof}

The following lemma about ipo\nbd-semigroups will be used numerous times in the following sections.

\begin{lemma}\label{lem:char:local:bounds}
For every ipo\nbd-semigroup $\m A$, the following conditions are equivalent:
\begin{enumerate}
    \item The identity $x\backslash x = x/x$ holds in $\m A$.
    \item The identity $\no x\cdot x = x\cdot\nein x$ holds in $\m A$,
\end{enumerate}
\end{lemma}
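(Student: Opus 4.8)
The plan is to reduce condition~(1) to condition~(2) by expressing both residuals $x\backslash x$ and $x/x$ as negations of products, and then to pass to the two identities using that each condition holds for \emph{all} elements together with the bijectivity of the negations.

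First I would rewrite each side of~(1). By the definition of the right residual, $x\backslash x = \nein(\no x\cdot x)$. For $x/x$ I would apply the laws of contraposition (Lemma~\ref{lem:part:swap:negations}) to get $x/x = \no x\backslash\no x$, and then expand by the same residual definition to obtain $x/x = \nein(\no\no x\cdot\no x)$. Hence condition~(1) at $x$ is the equation $\nein(\no x\cdot x) = \nein(\no\no x\cdot\no x)$. Since $\nein$ is injective --- it is a bijection with inverse $\no$ by~(dn) --- this is equivalent to $\no x\cdot x = \no\no x\cdot\no x$, and, as $\nein\no x = x$, the latter is exactly condition~(2) evaluated at $\no x$.

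It then remains to move from this pointwise equivalence to the equivalence of the identities. Here I would use that $\no$ is a bijection of $A$: as $x$ ranges over $A$ so does $\no x$, so ``(1) holds for all $x$'' is equivalent to ``(2) holds at $\no x$ for all $x$'', which is just ``(2) holds for all elements''. Symmetrically, evaluating~(1) at $\nein x$ and using $x\backslash x = \nein x/\nein x$ lands directly on~(2) at $x$, so either negation serves.

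I expect the main obstacle to be avoiding the seductive but doomed pointwise route: under~(1) set $e = x\backslash x = x/x$; since $\no(x\backslash x) = \no x\cdot x$ and $\nein(x/x) = x\cdot\nein x$, condition~(2) becomes $\no e = \nein e$. One cannot prove that $\nein$ and $\no$ agree on a single element of a non-cyclic ipo\nbd-semigroup from the hypothesis at that element alone, so this attempt stalls. The point of the argument above is that the hypothesis must be fed in at the \emph{negated} argument rather than at $x$ itself --- a move that costs nothing because the conditions are identities, and one that the contraposition rewriting makes automatic.
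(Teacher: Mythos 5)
Your proof is correct and uses essentially the same argument as the paper: both hinge on the contraposition law $x/x=\no x\backslash\no x$ (equivalently $x\backslash x=\nein x/\nein x$), double negation, and the key move of instantiating the hypothesis at the \emph{negated} element rather than at $x$ itself. The only cosmetic difference is that you package the two directions as a single pointwise biconditional ``(1) at $x$ iff (2) at $\no x$'' and then invoke bijectivity of $\no$, whereas the paper runs the two equational chains separately; the underlying computation is identical.
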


\begin{proof}
Suppose that the equation $x\backslash x = x/x$ holds in $\m A$, and in particular $\no x\backslash \no x = \no x/\no x$. Then, by double negation and contraposition,
\[
\no x\cdot x = \nein\no(\no x\cdot\nein\no x) = \nein (\no x/\no x) = \nein (\no x\backslash\no x) = \nein (x/x)
= \nein\no(x\cdot\nein x) = x\cdot\nein x.
\]
In order to prove the other implication, suppose that the equation $\no x\cdot x = x\cdot\nein x$ holds in $\m A$, and in particular $\no\nein x\cdot \nein x=\nein x\cdot\nein \nein x$. Then, by contraposition and double negation,
\[
x\backslash x = \nein x/\nein x = \no (\nein x\cdot\nein\nein x) = \no (\no\nein x\cdot\nein x) = \no(x\cdot \nein x) = x/x.\tag*{\QED}
\]\def\QED{}
\end{proof}

We end this section by underlining a special class of ipo\nbd-semigroups, namely those which have a lattice order. These can be then presented as algebraic structures $(A,\land,\lor,\cdot,\nein,\no)$ called \emph{i$\ell$-semigroups} or \emph{involutive semirings}\footnote{~This terminology is based on the observation that $(A,\lor,\cdot)$ is an idempotent semiring since the residuation property implies that $x(y\lor z)=xy\lor xz$ and $(x\lor y)z=xz\lor yz$, and $\land$ is term definable by the De~Morgan laws.} and \emph{integral i$\ell$-semigroups} or \emph{integral involutive semirings}, if they are integral as ipo\nbd-semigroups. Notice that, since the inequality $xy \le y$ can be expressed as $xy \lor y = y$ in the language of involutive semirings, the integral involutive semirings form a subvariety of the variety of involutive semirings.

Note that the variety of unital i$\ell$-semirings (without the assumption of integrality) coincides with the variety of involutive residuated lattices, and unital Frobenius quantales are the same as complete unital i$\ell$-semirings.

\section{Locally integral ipo-semigroups and involutive semirings}
\label{sec:locally:integral:ipo-semigroups}

An ipo\nbd-semigroup $\m A$ has \emph{local identities} if for all $x\in A$, $x\backslash x = x/x$, in which case we use $1_x$ to denote this element, and $1_x\cdot x = x$. Notice that, if $\m A$ has local identities, by Lemma~\ref{lem:char:local:bounds}, we also have that for all $x\in A$, $\no x\cdot x = x\cdot\nein x$ and we use $0_x$ to denote this element. If $\m A$ possesses a global identity $1$, then it is the smallest of the local identities: this is because $1\backslash 1 = 1 = 1/1$ and then $1_1 = 1$, and $1\cdot x = x$ implies that $1\le x/x = 1_x$, for all $x\in A$. Hence, if $\m A$ has a global identity, all local identities are positive. The following are other fundamental properties of local identities.

\begin{lemma}\label{lem:local:identities}
If $\m A$ has local identities, then $\m A$ satisfies the following equations:
\begin{enumerate}
    \item $1_x = \no 0_x = \nein 0_x$ and $0_x = \no 1_x = \nein 1_x$.
    \item\label{lem:part:bounds:and:negations} $1_x = 1_{\no x} = 1_{\nein x}$ and $0_x = 0_{\no x} = 0_{\nein x}$.
    \item $ x\backslash 0_x = \nein x$ and $x/1_x = x$.
    \item $1_x1_x = 1_x$ and $x\cdot 1_x = x$.
    \item $0_x/x = \no x$ and $1_x\backslash x = x$.
\end{enumerate}
\end{lemma}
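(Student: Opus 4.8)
The plan is to reduce every one of the five items to the basic definitions of the residuals, $z/y=\no(y\cdot\nein z)$ and $x\backslash z=\nein(\no z\cdot x)$, together with double negation, the contraposition laws of Lemma~\ref{lem:part:swap:negations}, and the fact that $0_x$ is a genuine two-sided value $\no x\cdot x=x\cdot\nein x$ guaranteed by Lemma~\ref{lem:char:local:bounds}. Items~(1) and~(2) are purely formal and require none of the identity behaviour of $1_x$. For~(1) I would unfold $1_x=x/x=\no(x\cdot\nein x)=\no 0_x$ and $1_x=x\backslash x=\nein(\no x\cdot x)=\nein 0_x$, and then apply $\no$ and $\nein$ to these equalities, using $\no\nein=\nein\no=\mathrm{id}$, to obtain $0_x=\no 1_x=\nein 1_x$. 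For~(2), contraposition gives $1_x=x/x=\no x\backslash\no x=1_{\no x}$ and $1_x=x\backslash x=\nein x/\nein x=1_{\nein x}$; the companion equalities for $0_x$ then follow from~(1), since $0_{\no x}=\no 1_{\no x}=\no 1_x=0_x$, and similarly at $\nein x$.

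Next I would pin down the identity behaviour of $1_x$, which is what items~(3)--(5) really depend on. The hypothesis supplies $1_x\cdot x=x$ for every $x$; applying this at $\nein x$ and $\no x$ and invoking~(2) yields $1_x\cdot\nein x=\nein x$ and $1_x\cdot\no x=\no x$. For the right-handed action I would use that each $1_x$ is positive, so that by the equivalence $y\le py\iff y\le yp$ established in Section~2 we also have $y\le y\,1_x$; combined with $x\cdot 1_x\le x$ (immediate from $1_x=x\backslash x$ and residuation) this gives the second equation of~(4), $x\cdot 1_x=x$, and likewise $\no x\cdot 1_x=\no x$. Idempotence $1_x 1_x=1_x$ in~(4) then follows by combining positivity ($1_x\le 1_x1_x$) with $1_x1_x\le 1_x$, the latter from $x(1_x1_x)\le x\,1_x\le x$ and residuation.

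With these identity facts in hand, items~(3) and~(5) become a matter of substituting into the residual definitions and simplifying by double negation. For~(3): $x\backslash 0_x=\nein(\no 0_x\cdot x)=\nein(1_x\cdot x)=\nein x$ using $\no 0_x=1_x$ from~(1), and $x/1_x=\no(1_x\cdot\nein x)=\no\nein x=x$ using $1_x\cdot\nein x=\nein x$. For~(5): $0_x/x=\no(x\cdot\nein 0_x)=\no(x\cdot 1_x)=\no x$ and $1_x\backslash x=\nein(\no x\cdot 1_x)=\nein\no x=x$, using $\nein 0_x=1_x$ and the right-handed identities. The main obstacle I anticipate is precisely this right-handed behaviour of $1_x$: the definition only hands us the left action $1_x\cdot x=x$, and a naive attempt to derive $x\cdot 1_x=x$ (or either half of~(5)) from it runs in a circle, because taking a single negation of a product equation only ever produces a residual equation and conversely, so the two equations of~(5) appear to imply one another through $\no x$ and $\nein x$ without ever being grounded. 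Breaking this loop is where the positivity of the local identities is indispensable, since it supplies the missing inequality $x\le x\cdot 1_x$ directly.
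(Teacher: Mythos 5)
Your items (1)--(3) are correct and essentially follow the paper's argument (your derivation of $x/1_x=x$ from $1_x\cdot\nein x=1_{\nein x}\cdot\nein x=\nein x$ is in fact a bit more direct than the paper's detour through $x\backslash 0_x$). The problem is item (4), on which (5) then depends. You obtain $x\le x\cdot 1_x$ and $1_x\le 1_x1_x$ from the assertion that ``each $1_x$ is positive,'' but positivity of the local identities is \emph{not} part of the hypothesis of this lemma. The lemma assumes only that $\m A$ has local identities, i.e.\ $x\backslash x=x/x=:1_x$ and $1_x\cdot x=x$; the condition $y\le 1_x y$ for \emph{all} $y$ is listed as a separate axiom in the definition of local integrality, which is introduced only after this lemma and must not be presupposed here. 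The hypothesis gives you $1_x$ acting as a left identity on $x$ itself (and, via item (2), on $\nein x$, $\no x$, $0_x$, $1_x$), not on arbitrary elements; and the Section~2 equivalence $\forall y\,(y\le py)\iff\forall y\,(y\le yp)$ cannot be applied pointwise, since its proof uses the left inequality at the element $\nein(yp)$ rather than at $y$. So your argument proves the lemma only under a strictly stronger hypothesis, and, as you yourself observe in your closing paragraph, without that extra input the two halves of (5) just chase each other in a circle --- which is exactly why the right-identity law is the delicate part of this lemma.

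The paper breaks the circle without positivity roughly as follows: first $1_x0_x=0_x1_x=0_x$ (using $1_x=\nein 0_x=\no 0_x$, Lemma~\ref{lem:char:local:bounds}, and $1_x\cdot x=x$), whence $1_{1_x}=\no(1_x\cdot\nein 1_x)=\no(1_x0_x)=\no 0_x=1_x$ and so $1_x1_x=1_{1_x}\cdot 1_x=1_x$ by the hypothesis applied to the element $1_x$; next one computes $\no(x1_x)=\bigl(\no x\cdot(x1_x)\bigr)/(x1_x)$ and multiplies back by $x1_x$ to get $0_{x1_x}=0_x$, hence $1_{x1_x}=1_x$; finally $x\le(x1_x)/1_x=(x1_x)/1_{x1_x}=x1_x=x(x\backslash x)\le x$, where the middle equality is your own item (3) applied to the element $x1_x$. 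To repair your proposal you must replace the appeal to positivity with an argument of this kind.
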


\eject

\vspace*{-6ex}

\begin{shortproof}
\vspace{-1ex}
\begin{enumerate}
    \item Notice that $1_x = x/x = -(x\cdot \nein x) = -0_x$ and $0_x = \nein\no 0_x = \nein 1_x$. The other two equalities are proven in the same way.
    
    \item We have that $0_{\nein x} = \no\nein x\cdot \nein x = x\cdot \nein x = 0_x$, and therefore $1_{\nein x} = \no 0_{\nein x} = \no 0_x = 1_x$. The proof that $0_{\no x} = 0_x$ and $1_{\no x} = 1_x$ is analogous.

    \item The first equality follows from $x\backslash 0_x = \nein(\no 0_x\cdot x) = \nein(1_x\cdot x) = \nein x$. As for the second, $x/1_x = -x\backslash -1_x = -x\backslash 0_x = \nein\no x = x$.

    \item First, notice that $0_x1_x = 0_x\cdot\nein 0_x = \no 0_x\cdot 0_x = 1_x0_x = 1_x\cdot x\cdot\nein x = x\cdot\nein x = 0_x$. Hence, $1_{1_x} = 1_x/1_x = \no(1_x\cdot\nein 1_x) = \no(1_x0_x) = \no 0_x = 1_x$. And therefore, $1_x1_x = 1_{1_x}\cdot 1_x = 1_x$, as we wanted to show. Now, $\no(x1_x) = \no(x1_x1_x) = \no(x1_x\cdot\nein 0_x) = 0_x/(x1_x) = (0_x1_x)/(x1_x) = (-x(x1_x))/(x1_x)$ and multiplying by $x1_x$ we obtain
    \[
    0_{x1_x} = -(x1_x)(x1_x) = \big((-x(x1_x))/(x1_x)\big)(x1_x) = -x(x1_x) = 0_x1_x = 0_x.
    \]
    Thus, $1_{x1_x} = \no 0_{x1_x} = \no 0_x = 1_x$, whence we deduce that
    \[
    x \le (x1_x)/1_x = (x1_x)/(1_{x1_x}) = x1_x = x(x\backslash x) \le x,
    \]
    and therefore $x1_x = x$.

    \item The first equality follows from $0_x/x = \no(x\cdot\nein 0_x) = \no (x\cdot 1_x) = \no x$ and for the second equality, $1_x\backslash x = \nein 1_x/\nein x = 0_x/\nein x = \no\nein x = x$.\QED
\end{enumerate}\def\QED{}
\end{shortproof}

\begin{remark}\label{rem:loc:ident:commutat:implies:cyclic}
Commutativity doesn't imply cyclicity for ipo\nbd-semigroups (see Figure~\ref{fig:commutative:noncyclic}). But, it follows from the previous lemma that it does for ipo\nbd-semigroups with local identities, since in any commutative ipo\nbd-semigroup the equality $y/x = x\backslash y$ holds and therefore $\no x = 0_x/x = x\backslash 0_x = \nein x$. Hence, the example of Figure~\ref{fig:commutative:noncyclic} doesn't have local identities. Indeed, $1_a\cdot a = \bot\neq a$.
\end{remark}

\begin{figure}
\centering\scriptsize
\begin{tikzpicture}[xscale=.8,yscale=.7]
\node at (4,1)[n]{$x\cdot y = \bot$};
\node at (8,4)[n]{%
\begin{tabular}{>{$}c<{$}|>{$}c<{$}>{$}c<{$}>{$}c<{$}>{$}c<{$}>{$}c<{$}}
        & \top & a & b & c & \bot\\\hline
\nein   & \bot & b & c & a & \top \\
\no     & \bot & c & a & b & \top 
\end{tabular}};
\draw 
(-6,1)node[label=below:$\bot$]{}--
(-4,3)node[label=right:$c$]{}--
(-6,5)node[label=above:$\top$]{}--
(-8,3)node[label=left:$a$]{}--(-6,1)--
(-6,3)node[label=right:$b$]{}--(-6,5);
\end{tikzpicture}
\caption{Commutative and noncyclic ipo\nbd-semigroup.}
\label{fig:commutative:noncyclic}
\end{figure}
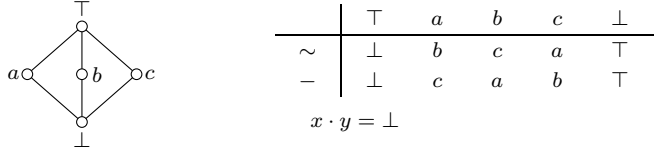

Inspired by the notion of an integral ipo\nbd-semigroup and its elementary properties, we define an ipo\nbd-semigroup $\m A$ to be \emph{locally integral} if it has local identities ($1_x = x\backslash x = x/x$ and $1_x\cdot x = x$) satisfying, moreover, the following conditions for all $x,y\in A$:
\begin{enumerate}
    \item the local identities are positive, that is, $y\le 1_xy$,
    \item the elements are \emph{locally bounded} by their identities, that is, $x\le 1_x$,
    \item and $x\backslash 1_x = 1_x$.
\end{enumerate}
As we have seen before, in an integral ipo\nbd-semigroup with global identity $1$ we have that $x\backslash x = 1 = x/x$, hence $1_x = 1$ and $1_x\cdot x = 1\cdot x = x$. Thus, all local identities coincide with 1, which is positive and also satisfies $x\le 1$ and $x\backslash 1 = 1$. That is, every integral ipo\nbd-semigroup is locally integral. The following proposition characterizes local integrality for ipo\nbd-semigroups and shows that if a locally integral ipo\nbd-semigroup $\m A$ has a global identity, then the corresponding $\widehat{\m A}$ is a locally integral ipo\nbd-monoid as defined in~\cite{GFJiLo23}, and the 1-free reducts of the locally integral ipo\nbd-monoids are locally integral ipo\nbd-semigroups.

\begin{proposition}\label{prop:char:locally:integral}
An ipo\nbd-semigroup $\m A$ is locally integral if and only if it satisfies the following conditions for all $x,y\in A$:
\begin{enumerate}
    \item $-x\cdot x = x\cdot\nein x$,
    \item $x/x$ is positive, that is, $y \le (x/x)y$,
    \item multiplication is \emph{square decreasing}, that is, $xx\le x$,
    \item $0_x$ is idempotent, that is, $0_x0_x = 0_x$.
\end{enumerate}
\end{proposition}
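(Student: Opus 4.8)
The plan is to match the four conditions (1)--(4) against the four clauses in the definition of local integrality, translating each clause into the language of products and negations. By Lemma~\ref{lem:char:local:bounds}, condition~(1) is precisely the assertion that $\m A$ has local identities $1_x=x\backslash x=x/x$, so I would begin by recording that (1) is interchangeable with the hypothesis of Lemma~\ref{lem:local:identities}, whose conclusions I would then be free to use (in particular $0_x=\no 1_x=\nein 1_x$, $1_x=\nein 0_x$, $1_{\no x}=1_{\nein x}=1_x$, $0_{0_x}=0_x$, and $0_x1_x=0_x$). Condition~(2) is literally the positivity clause $y\le 1_xy$; moreover, instantiating $y:=x$ gives $x\le(x/x)x=1_xx\le x$, so (1) and (2) together already supply the full ``has local identities'' clause, including $1_xx=x$, making Lemma~\ref{lem:local:identities} applicable. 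Condition~(3) is equivalent to the local-boundedness clause $x\le 1_x$, since $x\le x/x$ if and only if $xx\le x$ by residuation and $x/x=1_x$.

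This reduces the whole proposition to showing that, in the presence of (1)--(3), the remaining definitional clause $x\backslash 1_x=1_x$ is equivalent to condition~(4). The bridge I would use is the identity $x\backslash 1_x=\nein(\no 1_x\cdot x)=\nein(0_x x)$, obtained from $x\backslash z=\nein(\no z\cdot x)$ and $\no 1_x=0_x$. Since $1_x=\nein 0_x$ and $\nein$ is injective, this turns the clause $x\backslash 1_x=1_x$ into the equation $0_x x=0_x$. One inequality is immediate from square-decreasingness: $0_xx=\no x\cdot xx\le\no x\cdot x=0_x$.

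The heart of the argument, and the step I expect to be the main obstacle, is the reverse inequality $0_x\le 0_xx$ together with its link to~(4). Assuming (4), I would write $0_x=0_x0_x=0_x(\no x\cdot x)=(0_x\no x)x$ and then bound $0_x\no x\le 0_x1_x=0_x$, using that $\no x\le 1_{\no x}=1_x$ by local boundedness and that $0_x1_x=0_x$ by Lemma~\ref{lem:local:identities}; this yields $0_x=(0_x\no x)x\le 0_xx$, hence $0_xx=0_x$ and therefore $x\backslash 1_x=1_x$. For the converse, if $0_xx=0_x$ holds for all $x$, I would instantiate it at $0_x$ in place of $x$: since $0_{0_x}=0_x$, the resulting equation $0_{0_x}\cdot 0_x=0_{0_x}$ reads $0_x0_x=0_x$, which is~(4).

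With this equivalence in hand the proposition follows in both directions. If $\m A$ is locally integral, then (1)--(3) come directly from the definitional clauses as above, while (4) follows from $x\backslash 1_x=1_x$ via the equivalence. Conversely, given (1)--(4), conditions (1)--(3) reconstruct the ``local identities'', ``positive'', and ``locally bounded'' clauses, and the equivalence promotes (4) to the final clause $x\backslash 1_x=1_x$, so $\m A$ is locally integral.
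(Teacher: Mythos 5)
Your proof is correct and follows essentially the same route as the paper: conditions (1)--(3) are matched to the first three defining clauses exactly as in the paper's proof, and the last clause $x\backslash 1_x=1_x$ is reduced to the equation $0_x\cdot x=0_x$, which is then shown equivalent to idempotence of $0_x$ using the facts from Lemma~\ref{lem:local:identities}. The only cosmetic differences are that you get $0_x x\le 0_x$ directly from square-decreasingness and obtain (4) by instantiating $0_y\cdot y=0_y$ at $y=0_x$ (the identity $0_{0_x}=0_x$ you invoke is not one of the listed conclusions of Lemma~\ref{lem:local:identities}, but it follows in one line from $0_{0_x}=\no 0_x\cdot 0_x=1_x0_x=0_x$, which is established in that lemma's proof), whereas the paper uses $0_{\nein x}=0_x$ for the same purpose.
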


\begin{proof}
By virtue of Lemma~\ref{lem:char:local:bounds}, condition~(1) is equivalent to $x\backslash x = x/x$, and together with condition~(2), we have that $x \le (x/x) x \le x$, that is, $1_x\cdot x = x$, and hence $\m A$ has local identities. Condition~(2) corresponds to the positivity of the local identities, and being square decreasing is equivalent to $x\le 1_x$ by residuation. As for the last condition, if $x\backslash 1_x = 1_x$ holds in $\m A$, then $0_x = \no(x\backslash 1_x) = \no\nein(\no 1_x\cdot x) = 0_x\cdot x$. Therefore,
\[
0_x\cdot 0_x = 0_x \cdot x\cdot\nein x = 0_x\cdot\nein x = 0_{\nein x}\cdot\nein x = 0_{\nein x} = 0_x.
\]
For the converse, if $0_x$ is idempotent in $\m A$, then
\[
0_x = 0_x0_x = 0_x\cdot x\cdot\nein x \le 0_x\cdot x\cdot 1_{\nein x} = 0_x\cdot x\cdot 1_x = 0_x\cdot x \le 0_x\cdot 1_x = 0_x,
\]
since $\nein x\le 1_{\nein x} = 1_x$ and $x\le 1_x$. Hence $0_x\cdot x = 0_x$, whence $1_x = \nein 0_x = \nein(0_x\cdot x) = x\backslash 1_x$.
\end{proof}

\begin{example}
Consider the structure $\m A =(\{e,a\},=,\cdot,\nein,\no)$, with the commutative multiplication given by $e\cdot x = x$ and $a\cdot a = e$, and the involutive negations given by $\nein x = \no x = x$. We can readily check that this is a commutative ipo\nbd-semigroup and moreover $x\backslash y = y/x = x\cdot y$, whence we deduce that $1_x= x\cdot x = e$ and thus $1_x\cdot x = e\cdot x = x$, for all $x$. In particular, $\m A$ has local identities, both of them equal to $e$. Nonetheless, $\m A$ is not locally integral, because it is not square decreasing.
\end{example}

\begin{remark}
If a locally integral ipo\nbd-semigroup has a minimal (or maximal) element then it is bounded. In particular all finite ipo\nbd-semigroups are bounded. However, there exist unbounded idempotent locally integral ipo\nbd-monoids.    
\end{remark}

\begin{example}\label{ex:loc:integ:no:global:unit}
Consider the cyclic, commutative, and idempotent ipo\nbd-semigroup of Figure~\ref{fig:4-element:without:global:identity}, whose product is determined by $\bot\cdot x = \bot$ and $p\cdot q = p\cdot\top = q\cdot \top = \top$, and the involutive negations are the identity on $p$ and $q$ and transpose $\top$ and $\bot$. This ipo\nbd-semigroup has local identities $1_p =p$, $1_q=q$, and $1_\top=\top$, and by definition it satisfies all the properties of Proposition~\ref{prop:char:locally:integral}. Thus, it is locally integral. It doesn't contain a global identity, and therefore it is not integral. Notice also that $0_p \nle 1_q$ and $0_q\nle 1_p$.
\end{example}

\begin{figure}
\centering\scriptsize
\begin{tikzpicture}[xscale=.8,yscale=.7]
\draw 
(-6,1)node[label=below:$\bot$]{}--
(-4,3)node[label=right:{\,$q=1_q=0_q$}]{}--
(-6,5)node[label=above:$\top$]{}--
(-8,3)node[label=left:{$0_p=1_p=p$\,}]{}--cycle;
\end{tikzpicture}
\caption{Locally integral ipo\nbd-semigroup with three different local identities.}
\label{fig:4-element:without:global:identity}
\end{figure}
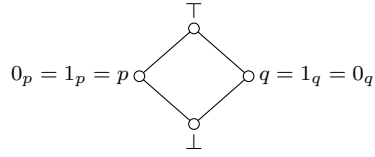

The main goal of this section is a decomposition theorem stating that every locally integral ipo\nbd-semigroup (involutive semiring, respectively) can be decomposed in a very particular way into integral involutive ipo\nbd-semigroups (involutive semirings, respectively). But, before addressing this decomposition, we need a better description of the positive elements of a locally integral ipo\nbd-semigroup~$\m A$. Recall that $A^+ = \{p\in A : \text{for all }x, x\le px\} = \{p\in A : \text{for all }x, x\le xp\}$ and that by definition $1_x\in A^+$, for all $x\in A$. We also define the set $A^\sim = \{\nein p : p\in A^+\}$, which is closed downwards, since $A^+$ is closed upwards and $\nein$ is antitone.

\begin{proposition}\label{prop:positive:are:central}
The positive elements of any locally integral ipo\nbd-semigroup are central.
\end{proposition}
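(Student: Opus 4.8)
The plan is to show that for a positive $p$ the operations ``multiply on the left by $p$'' and ``multiply on the right by $p$'' are literally the same function, by exhibiting both as closure operators with a common set of fixed points. First I would record that every positive element is idempotent: positivity gives $p\le pp$, while square-decreasingness (available by Proposition~\ref{prop:char:locally:integral}) gives $pp\le p$, so $pp=p$. Consequently both maps $z\mapsto pz$ and $z\mapsto zp$ are order-preserving, inflationary (by positivity, $z\le pz$ and $z\le zp$), and idempotent (by $pp=p$); that is, they are closure operators on $(A,\le)$. The whole proposition amounts to the assertion that these two closure operators coincide.

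The key step, and the part I expect to carry all the weight, is the claim that for \emph{every} $z\in A$ we have $pz=z\iff zp=z$, i.e.\ that the two closure operators have the same fixed points. This is where local integrality enters decisively through the existence of local identities. By residuation, $pz\le z\iff p\le z/z$ and $zp\le z\iff p\le z\backslash z$; since $\m A$ has local identities, $z/z=z\backslash z=1_z$, so both inequalities are equivalent to the single condition $p\le 1_z$. On the other hand positivity forces $z\le pz$ and $z\le zp$, so $pz\le z$ already means $pz=z$ and $zp\le z$ already means $zp=z$. Combining these gives $pz=z\iff p\le 1_z\iff zp=z$, so both closure operators have fixed-point set $\{z : p\le 1_z\}$.

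With this symmetric characterization of one-sided fixedness in hand, the conclusion is a short squeeze. Fix an arbitrary $y$. Using $pp=p$ and associativity, $yp$ is right-fixed, since $(yp)p=y(pp)=yp$; hence by the equivalence it is left-fixed, $p(yp)=yp$. Multiplying $y\le yp$ on the left by $p$ then yields $py\le p(yp)=yp$. Symmetrically $py$ is left-fixed, $p(py)=(pp)y=py$, hence right-fixed, $(py)p=py$, and multiplying $y\le py$ on the right by $p$ yields $yp\le(py)p=py$. Therefore $py=yp$, and $p$ is central.

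The main obstacle is precisely the equivalence $pz=z\iff zp=z$ of the middle paragraph: it is what converts a one-sided fixed point into a two-sided one, and it is the only place where the balancing identity $z\backslash z=z/z$ is genuinely needed. Once that symmetric fixed-point description is established, positivity (to make one-sided fixedness automatic from a one-sided inequality) together with idempotence $pp=p$ suffices to pin $py$ and $yp$ to each other; everything else is routine residuation bookkeeping.
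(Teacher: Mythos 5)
Your proof is correct and rests on the same key mechanism as the paper's: square-decreasingness plus the balance $z\backslash z = z/z$ lets you transfer one-sided absorption by $p$ (applied to the element $yp$, resp.\ $px$ in the paper) to the other side, after which positivity and monotonicity give the squeeze $py\le yp\le py$. The closure-operator/fixed-point packaging is a slightly more conceptual presentation of the identical computation, so this is essentially the paper's argument.
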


\begin{proof}
Suppose that $p$ is a positive element and let $x$ be an arbitrary element. By the square decreasing property, $pp\le p$ and therefore $ppx\le px$, which implies $p\le px/px=px\backslash px$ by residuation and local integrality. Therefore $pxp\le px$. Since $p$ is positive, we have $x\le px$, and multiplying by $p$, we obtain $xp\le pxp\le px$. The reverse inequality follows by symmetry.
\end{proof}

\begin{lemma}\label{lem:char:positives:and:zeros}
Let $\m A$ be a locally integral ipo\nbd-semigroup. For all $p$ and $a$ in $A$, we have that $p\in A^+$ if and only if $1_p = p$ and $a\in A^\sim$ if and only if $0_a = a$. As a consequence, $A^+ = \{1_x : x\in A\}$ and $A^\sim = \{0_x : x\in A\}$. Moreover, $0_x \le 1_x$ for every $x\in A$, that is, $\nein p \le p$, for every $p\in A^+$.
\end{lemma}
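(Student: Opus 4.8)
The plan is to establish the two biconditionals first and then read off the set equalities and the final inequality as immediate consequences. Throughout, the essential tools will be the identities collected in Lemma~\ref{lem:local:identities}, the defining conditions of local integrality (in particular local boundedness $x\le 1_x$), and the fact noted just before the statement that $1_x\in A^+$ for every $x$.

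For the characterization $p\in A^+ \iff 1_p = p$, the backward direction is immediate, since $1_p\in A^+$ gives $p = 1_p\in A^+$. For the forward direction, assume $p$ is positive. Local boundedness already yields $p\le 1_p$, so only the reverse inequality is needed. Here is the key move: since $p$ is positive, instantiating $x\le px$ at $x = 1_p$ gives $1_p \le p\cdot 1_p$, and by Lemma~\ref{lem:local:identities}(4) we have $p\cdot 1_p = p$; hence $1_p\le p$ and therefore $1_p = p$. This single instantiation is really the crux of the whole lemma.

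For $a\in A^\sim \iff 0_a = a$, I will pass through the negations. If $a = \nein p$ with $p\in A^+$, then $1_p = p$ by the first part, and Lemma~\ref{lem:local:identities}(1)--(2) give $0_a = 0_{\nein p} = 0_p = \nein 1_p = \nein p = a$. Conversely, if $0_a = a$, then Lemma~\ref{lem:local:identities}(1) yields $1_a = \no 0_a = \no a$, so $\no a = 1_a\in A^+$; double negation then exhibits $a = \nein\no a$ as $\nein p$ for the positive element $p = \no a$, whence $a\in A^\sim$.

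Finally, the set equalities follow directly: $\{1_x : x\in A\}\subseteq A^+$ because each $1_x$ is positive, while the reverse inclusion is exactly the forward direction of the first biconditional; dually $\{0_x : x\in A\} = A^\sim$, using $0_x = \nein 1_x$ (so each $0_x$ lies in $A^\sim$) together with the second biconditional. For $0_x\le 1_x$ I use that multiplication is order-preserving together with $x\le 1_x$ and $\nein x\le 1_{\nein x} = 1_x$ from Lemma~\ref{lem:local:identities}(2), giving $0_x = x\cdot\nein x \le 1_x 1_x = 1_x$; specializing to a positive $p$, where $p = 1_p$ and $\nein p = 0_p$, rewrites this as $\nein p\le p$. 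I expect no serious obstacle: the only step requiring genuine insight is the instantiation $x = 1_p$ in the forward direction of the first biconditional, and everything else is bookkeeping with the identities of Lemma~\ref{lem:local:identities}.
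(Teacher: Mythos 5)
Your proof is correct and follows essentially the same route as the paper: the crux in both is the instantiation $1_p \le p\cdot 1_p = p$ for positive $p$, and the second biconditional is handled by the same chain of identities from Lemma~\ref{lem:local:identities}. The only (harmless) divergence is the final inequality, where the paper computes $0_x\le 1_{0_x} = \no 0_x = 1_x$ while you use monotonicity of multiplication via $0_x = x\cdot\nein x\le 1_x1_x = 1_x$; both are valid one-line arguments.
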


\begin{proof}
By local integrality, $p \le 1_p$ is valid for all $p$ in $A$. If moreover $p\in A^+$, then $1_p \le p\cdot 1_p = p$ and hence, $1_p = p$. The reverse implication is trivial, since by local integrality $1_p\in A^+$. For the second part, suppose that $a\in A^\sim$ and let $p\in A^+$ so that $a = \nein p$. Then,
\[
a= \nein p = \nein 1_p = 0_p = 0_{\nein p} = 0_a.
\]
And for the reverse implication, notice that $0_a = \nein 1_a\in A^\sim$, since $1_a\in A^+$ by local integrality. Finally, for every $x\in A$ we have that $0_x \le 1_{0_x} = 0_x/0_x = -0_x = 1_x$.
\end{proof}

Given a locally integral ipo\nbd-semigroup $\m A$, the sets $A^+$ and $A^\sim$ are obviously partially ordered by the order of $\m A$. It turns out that $(A^+,\le)$ and $(A^\sim,\le)$ are semilattices dual to each other, and their properties are very consequential. For instance, if $(A^+,\le)$ has a minimum element $p$ then it is the global identity of $\m A$, because for every $a\in A$ we have that $p\le 1_a = a/a$, and therefore $pa\le a$; but since $p$ is positive, we also have that $a\le pa$, showing that $pa=a$. The next proposition describes these two posets in more detail.

\begin{proposition}\label{prop:A+:join:semilattice}
Let $\m A$ be a locally integral ipo\nbd-semigroup. Then $\m A^+ = (A^+,\cdot)$ is a join-semilattice whose order coincides with the order of $\m A$. Also, $\m A^\sim = (A^\sim, +)$ is a meet-semilattice, whose order coincides with the order of $\m A$, and is dually isomorphic to $\m A^+$. Moreover, if $x,y\le a\in A^\sim$, then $xy = x\land y = x+y$. As a consequence, if $X\subset A^+$ is closed under joins and meets, then $(X,\le)$ is a distributive lattice.
\end{proposition}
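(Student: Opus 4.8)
The plan is to treat the four assertions in order, establishing the two semilattice structures and their duality, then the identity $xy=x\land y=x+y$, and finally distributivity. For assertion~(1) I would first note that $A^+$ is closed under $\cdot$, since for $p,q\in A^+$ and any $x$ we have $x\le qx\le p(qx)=pqx$; on $A^+$ the product is associative, commutative (positive elements are central by Proposition~\ref{prop:positive:are:central}), and idempotent (for $p\in A^+$, $p=1_p$ by Lemma~\ref{lem:char:positives:and:zeros} and $1_p1_p=1_p$ by Lemma~\ref{lem:local:identities}), so $(A^+,\cdot)$ is a semilattice. To identify its order with that of $\m A$ and $\cdot$ with the join, I would prove $p\le q\iff pq=q$: if $p\le q$ then $q\le pq\le qq=q$ by positivity, monotonicity and idempotence, while if $pq=q$ then $p\le qp=pq=q$ by positivity and centrality. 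I would also record, for later use, that $pq$ is the join of $p,q$ in all of $\m A$, since any upper bound $w$ of $\{p,q\}$ satisfies $pq\le wq\le ww\le w$. For assertion~(2), $\nein$ restricts to an antitone bijection $A^+\to A^\sim$ with inverse $\no$, and it is a homomorphism onto $(A^\sim,+)$ because $\nein p+\nein q=\nein(\no\nein p\cdot\no\nein q)=\nein(pq)$; hence $A^\sim$ is closed under $+$, $(A^\sim,+)$ is a semilattice, and $\nein$ carries the join $\cdot$ to the meet $+$, giving the claimed dual isomorphism with order again that of $\m A$. Dualizing the recorded fact shows $u+v$ is the $\m A$-meet of $u,v$ for all $u,v\in A^\sim$.

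For assertion~(3), the first step is to observe that everything below $a$ is a zero element: if $z\le a=\nein 1_a$ then $1_a\le\no z$, so $\no z\in A^+$ by upward closure, and positivity of $\no z$ gives $z\le\no z\cdot z=0_z$, while $0_z\le z$ always holds (as $0_z=0_z\cdot z\le 1_z\cdot z=z$); hence $z=0_z\in A^\sim$. In particular $x,y\in A^\sim$, so by assertion~(2) the meet $x\land y$ exists and equals $x+y$. It remains to show $xy=x\land y$. For $xy\le x\land y$ I would use $xy\le x\cdot\nein x=0_x=x$ and $xy\le\no y\cdot y=0_y=y$, which rest on $y\le\nein x$ and $x\le\no y$; these follow by applying the antitone negations to $x,y\le a$ together with $a\le 1_a$. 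For the reverse inequality, $m:=x\land y=x+y$ lies in $A^\sim$ and is therefore idempotent, whence $m=mm\le xy$ because $m\le x,y$.

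For assertion~(4) I would dualize to $A^\sim$, where the product computes the meet. Given $X\subseteq A^+$ closed under joins and meets, set $Y=\nein X\subseteq A^\sim$; since $\nein$ is an order-reversing bijection, $Y$ is closed under the $\m A$-meet $+$ and under $\m A$-joins, these are its lattice operations, and $(X,\le)$ is distributive if and only if $(Y,\le)$ is, distributivity being self-dual. For $u,v,w\in Y$, let $b=u\lor v\lor w\in Y$, so $u$, $v$, $w$ and $v\lor w$ all lie below $b\in A^\sim$; using assertion~(3) (product $=$ meet below $b$) and the fact that multiplication preserves existing joins, I compute
\[
u\land(v\lor w)=u\,(v\lor w)=uv\lor uw=(u\land v)\lor(u\land w),
\]
which is the distributive law in $Y$, hence in $X$. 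I expect the main obstacles to be the sandwiching in assertion~(3) and the decision in assertion~(4) to work in $A^\sim$ rather than $A^+$: only in $A^\sim$ is the product the meet, so that distributivity of multiplication over joins becomes distributivity of meet over join, a manoeuvre unavailable in $A^+$, where the product is the join.
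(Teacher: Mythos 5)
Your proof is correct and follows essentially the same route as the paper's: $A^+$ is closed under products with $pq$ the join computed in all of $\m A$, the negation $\nein$ gives the dual isomorphism onto $(A^\sim,+)$, the identity $xy=x\land y$ below an element of $A^\sim$ is obtained by sandwiching $xy$ using the idempotence of the elements of $A^\sim$, and distributivity is transferred to $X^\sim$, where the product computes the meet and hence distributes over existing joins. The minor variations (proving $p\le q\iff pq=q$ explicitly, routing the downward closure of $A^\sim$ through $0_z$, and naming the common upper bound $b=u\lor v\lor w$) do not change the argument.
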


\begin{proof}
For the first part, it is immediate to check that $A^+$ is closed under products. Furthermore, for all $p,q\in A^+$, we have that that $p \le pq$ and $q \le pq$. Moreover, if $p \le r$ and $q \le r$, then $pq \le rr \le r$. This shows that $pq = p\lor q$ and therefore $\m A^+ = (A^+,\cdot)$ is a join semilattice whose induced order is the restriction of $\le$ to $A^+$. The second part is a consequence of the fact that the order-reversing bijection $\eta\: A^+\to A^\sim$ given by $\eta(p) = \nein p$ satisfies $\eta(p\cdot q) = \nein (p\cdot q) = \nein (\no\nein p\cdot\no\nein q) =  \nein p + \nein q = \eta(p) + \eta(q)$. Hence, $x+y = x\land y$ for all $x,y\in A^\sim$. Using the characterization of Lemma~\ref{lem:char:positives:and:zeros}, for all $p,q\in A^+$,
\[
0_p\land 0_q =0_p+0_q = \nein(\no 0_p\cdot \no 0_q) = \nein(1_p\cdot 1_q) = \nein (pq) = \nein 1_{pq} = 0_{pq}.
\]

Suppose now that $a\in A^\sim$ and $x,y\le a$. Then, there are $p,q,r\in A^+$ such that $x = \nein p$, $y=\nein q$, $a = \nein r$, and moreover $r\le p, q$. Notice that $x\le a =\nein r \le r \le q$ and hence, $xy\le qy = q\cdot\nein q = 0_q = \nein 1_q = \nein q = y$. Analogously, $xy\le x$ and therefore $xy\le x\land y = 0_p\land 0_q = 0_{pq} = 0_{pq}0_{pq} \le xy$, since $\no x = p\le p\lor q = pq = 1_{pq}$ and also $\no y\le 1_{pq}$. Hence, $xy = x\land y = x+y$.

Finally, if $X\subset A^+$ is closed under joins and meets, then so is $X^\sim = \{\nein p : p\in X\}$, and for all $x,y,z\in X^\sim$ we have that
\[
x\land(y\lor z ) = x\cdot(y\lor z) = (x\cdot y) \lor (x\cdot z) = (x\land y) \lor (x\land z).
\]
Thus, $(X^\sim,\le)$ is a distributive lattice, and therefore so is the dually-isomorphic lattice $(X,\le)$.
\end{proof}

In order to obtain the aforementioned decomposition theorem, we start by noticing that the equivalence relation $x \equiv  y$ if and only if $1_x = 1_y$ partitions every locally integral ipo\nbd-semigroup into its equivalence classes $A_x = \{y\in A : 1_x = 1_y\}$ and, obviously, $x\in A_x$. The next lemma offers a very useful description of $A_x$ in which we make use of the interval notation $[0_x,1_x] = \{y\in A : 0_x \le y \le 1_x\}$. Recall that by virtue of Lemma~\ref{lem:local:identities} and Proposition~\ref{prop:char:locally:integral}, both $1_x$ and $0_x$ are idempotent. This implies that every interval $[0_x,1_x]$ is closed under products. Indeed, if $y,z\in [0_x,1_x]$ then $0_x = 0_x0_x \le yz \le 1_x1_x = 1_x$, by monotonicity, and hence $yz\in [0_x,1_x]$.

\eject

\begin{lemma}\label{lem:char:Ax}
For any locally integral ipo\nbd-semigroup $\m A$ and all $x$ and $y$ in $A$:
\begin{enumerate}
    \item $x\in [0_x,1_x]$,
    \item $y\in [0_x, 1_x] \iff 1_y\le 1_x \iff [0_y,1_y]\subset [0_x, 1_x]$,
    \item $1_x\cdot y = y \iff 1_x\le 1_y$,
    \item\label{lem:part:char:Ax} $y\in A_x \iff y\in [0_x, 1_x]$ and $1_x\cdot y = y$.
\end{enumerate}
\end{lemma}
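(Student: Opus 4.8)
The plan is to prove the four items in the order (1), (3), (2), (4), since each later item feeds on the earlier ones. For item~(1), the upper bound $x\le 1_x$ is just local boundedness (one of the equivalent conditions in Proposition~\ref{prop:char:locally:integral}), while for the lower bound I would apply local boundedness to $\nein x$, obtaining $\nein x\le 1_{\nein x}=1_x$ by Lemma~\ref{lem:local:identities}, and then act by the antitone operation $\no$ to get $0_x=\no 1_x\le\no\nein x=x$. Item~(3) splits cleanly into its two directions via residuation: from $1_x\cdot y=y$ one reads off $1_x\le y/y=1_y$ directly, and conversely, if $1_x\le 1_y$ then positivity of $1_x$ gives $y\le 1_x y\le 1_y y=y$, forcing $1_x y=y$.

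Item~(2) is the heart of the lemma and the place I expect the real work. The key preliminary observation is that the inequality $1_y\le 1_x$ between positive elements can be transferred to its dual $0_x\le 0_y$ through the order-reversing isomorphism $\eta$ of Proposition~\ref{prop:A+:join:semilattice}, using $\nein 1_y=0_y$ from Lemma~\ref{lem:local:identities}. I would then close the three conditions into a cycle. The implication $[0_y,1_y]\subset[0_x,1_x]\Rightarrow y\in[0_x,1_x]$ is immediate from item~(1), since $y\in[0_y,1_y]$. The implication $1_y\le 1_x\Rightarrow[0_y,1_y]\subset[0_x,1_x]$ is a chaining argument: rephrasing $1_y\le 1_x$ as $0_x\le 0_y$, any $z$ with $0_y\le z\le 1_y$ satisfies $0_x\le 0_y\le z\le 1_y\le 1_x$. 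The crucial and least routine step is $y\in[0_x,1_x]\Rightarrow 1_y\le 1_x$, for which I would argue as follows: from $y\le 1_x$, applying the antitone $\no$ gives $\no y\ge\no 1_x=0_x$, and by hypothesis also $y\ge 0_x$; multiplying these and invoking idempotence of $0_x$ yields $0_y=\no y\cdot y\ge 0_x\cdot 0_x=0_x$, i.e. $0_x\le 0_y$, which dualizes back to $1_y\le 1_x$.

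I would flag precisely this last step as the main obstacle, because the naive guess that $z\mapsto 1_z$ is order-preserving is false: in the diamond of Figure~\ref{fig:4-element:without:global:identity} one has $\bot\le p$ yet $1_\bot=\top\nle p=1_p$, so $y\le 1_x$ alone cannot suffice. The proof must genuinely use both bounds of the interval together with the idempotence of the bottom elements $0_x$, and the trick of passing to the $0$-side via $\eta$ is what makes both hypotheses enter symmetrically.

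Finally, item~(4) is a synthesis of the preceding items. If $y\in A_x$, i.e.\ $1_x=1_y$, then also $0_x=0_y$, so $y\in[0_y,1_y]=[0_x,1_x]$ by item~(1), and $1_x\le 1_y$ gives $1_x\cdot y=y$ by item~(3). Conversely, $y\in[0_x,1_x]$ delivers $1_y\le 1_x$ by item~(2) while $1_x\cdot y=y$ delivers $1_x\le 1_y$ by item~(3), so $1_x=1_y$ and hence $y\in A_x$. No new ideas are needed here beyond bookkeeping of the equivalences already established.
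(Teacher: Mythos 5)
Your proposal is correct and follows essentially the same route as the paper's proof: item (1) via local boundedness of $\nein x$ and antitonicity, the key step of item (2) via $0_x = 0_x\cdot 0_x \le 0_y$ using idempotence of $0_x$ and monotonicity of multiplication (the paper writes $0_y = y\cdot\nein y$ where you write $\no y\cdot y$, which is the same element), item (3) via residuation/positivity, and item (4) as a synthesis of (2) and (3). Your observation that $z\mapsto 1_z$ fails to be monotone, so that both interval bounds are genuinely needed, is a correct and worthwhile remark, but the argument itself is the paper's.
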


\begin{proof}\vspace{-\baselineskip}
\begin{enumerate}
    \item  By antitonicity, $\nein x\le 1_{\nein x}$ implies that $0_x = 0_{\nein x} = \no 1_{\nein x} \le \no\nein x = x$. Hence, $x\in [0_x, 1_x]$.
    
    \item For the left-to-right implication, notice that $0_x\le y\le 1_x$ implies that $0_x\le \nein y\le  1_x$, by antitonicity, and then $0_x = 0_x\cdot 0_x \le y\cdot \nein y = 0_y$. By antitonicity again, we obtain that $1_y\le 1_x$ and hence $[0_y,1_y]\subset [0_x,1_x]$. The reverse implication is a consequence of part~(1).
    
    \item Since $1_x\in A^+$, we have that $y \le 1_x\cdot y$. Hence, $1_x\cdot y = y$ is equivalent to $1_x\cdot y \le y$ which is equivalent to $1_x \le y/y = 1_y$, by residuation.
    
    \item If $y\in A_x$ then $1_y = 1_x$, and thus $y\in [0_y,1_y] = [0_x, 1_x]$, by part~(1). Moreover, $1_x\cdot y = 1_y\cdot y = y$. For the reverse implication, notice that if $y\in [0_x,1_x]$ and $1_x\cdot y = y$, then $1_y \le 1_x$, by part~(2), and $1_x \le 1_y$, by part~(3).\QED
\end{enumerate}\def\QED{}
\end{proof}

Next, we will use the description of $A_x$ of the previous lemma in order to show that the sets $A_x$ are closed under several operations of $\m A$.

\begin{lemma}\label{lem:Ax:closed:negations:mult:meets:joins}
Let $\m A$ be a locally integral ipo\nbd-semigroup. For every $x$ in $A$:
\begin{enumerate}
    \item $A_x$ is closed under the involutive negations,
    \item $A_x$ is closed under multiplication,
    \item $A_x$ is closed under all existing nonempty joins and nonempty meets.
\end{enumerate}
\end{lemma}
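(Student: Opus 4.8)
The plan is to establish the three closure properties one at a time, in each case reducing membership in $A_x$ to the two conditions supplied by Lemma~\ref{lem:char:Ax}(\ref{lem:part:char:Ax}): that $y \in A_x$ exactly when $y \in [0_x, 1_x]$ and $1_x \cdot y = y$. I will also lean on parts~(2) and~(3) of that lemma, which translate these two conditions into the inequalities $1_y \le 1_x$ and $1_x \le 1_y$, respectively.

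Closure under negations is the quickest: if $y \in A_x$, so that $1_y = 1_x$, then Lemma~\ref{lem:local:identities}(\ref{lem:part:bounds:and:negations}) gives $1_{\nein y} = 1_y = 1_x$ and $1_{\no y} = 1_y = 1_x$, whence $\nein y, \no y \in A_x$. For closure under multiplication, take $y, z \in A_x$. Since $y, z \in [0_x, 1_x]$ and every such interval is closed under products (as observed just before Lemma~\ref{lem:char:Ax}), we get $yz \in [0_x, 1_x]$; and associativity together with $1_x \cdot y = y$ gives $1_x \cdot (yz) = (1_x \cdot y)\,z = yz$. Both conditions of Lemma~\ref{lem:char:Ax}(\ref{lem:part:char:Ax}) hold, so $yz \in A_x$.

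The join-and-meet case is where a little care is needed. Let $\{y_i\}_{i\in I}$ be a nonempty family in $A_x$. For an existing join $s = \bigvee_i y_i$, nonemptiness forces $0_x \le s$ (each $y_i$ lies above $0_x$), while $s \le 1_x$ since $1_x$ is an upper bound of the family; thus $s \in [0_x,1_x]$ and $1_s \le 1_x$ by Lemma~\ref{lem:char:Ax}(2). The reverse inequality I would obtain from the fact, recorded in Section~\ref{sec:ipo-semigroups}, that multiplication preserves existing joins: $1_x \cdot s = \bigvee_i (1_x \cdot y_i) = \bigvee_i y_i = s$, so $1_x \le 1_s$ by Lemma~\ref{lem:char:Ax}(3), giving $1_s = 1_x$ and $s \in A_x$. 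For an existing meet $m = \bigwedge_i y_i$ the containment $m \in [0_x,1_x]$ follows symmetrically, yielding $1_m \le 1_x$; but here multiplication cannot be expected to commute with the meet, so I would instead argue by the greatest-lower-bound property: from $m \le y_i$ and monotonicity of the product, $1_x \cdot m \le 1_x \cdot y_i = y_i$ for every $i$, so $1_x \cdot m$ is a lower bound of the family and hence $1_x \cdot m \le m$; positivity of $1_x$ gives the opposite inequality for free, so $1_x \cdot m = m$ and $1_x \le 1_m$ by Lemma~\ref{lem:char:Ax}(3). This asymmetry between joins and meets is the only real obstacle: joins ride on the distributivity of multiplication over existing joins, whereas meets must be handled purely order-theoretically via the defining universal property of the meet.
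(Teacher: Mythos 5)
Your proof is correct, and parts~(1) and~(2), as well as the join half of part~(3), coincide with the paper's argument: negations via Lemma~\ref{lem:local:identities}(\ref{lem:part:bounds:and:negations}), products via closure of the interval $[0_x,1_x]$ and associativity, and joins via the distribution of multiplication over existing joins, all funneled through the characterization in Lemma~\ref{lem:char:Ax}(\ref{lem:part:char:Ax}). The one place you diverge is the meet case. The paper dispatches it in a single line by De~Morgan: since $A_x$ is closed under the negations and $\bigwedge Y = \no\bigvee_{y\in Y}\nein y$, closure under meets reduces to closure under joins (the antitone bijections $\nein,\no$ guarantee that the relevant join exists whenever the meet does). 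You instead argue directly from the universal property of the meet: $1_x\cdot m\le 1_x\cdot y_i=y_i$ by monotonicity, so $1_x\cdot m$ is a lower bound and hence $1_x\cdot m\le m$, while positivity of $1_x$ gives the converse. Both arguments are sound; yours avoids invoking closure under negations and the De~Morgan identity, at the cost of an extra order-theoretic step, and it makes explicit the genuine asymmetry (multiplication preserves joins but not meets) that the paper's reduction quietly circumvents.
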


\begin{proof}\vspace{-\baselineskip}
\begin{enumerate}
    \item If $y\in A_x$ then $1_{\nein y} = 1_y = 1_x$, and hence $\nein y\in A_x$.
    
    \item If $y,z\in A_x$ then $y,z \in [0_x,1_x]$ and $1_x\cdot y=y$ and $1_x\cdot z = z$. Hence, $yz\in [0_x,1_x]$ and $1_x\cdot (yz) = (1_x\cdot y)z = yz$. Therefore, $yz\in A_x$, by Lemma~\ref{lem:char:Ax}.

    \item Suppose that $\emptyset \neq Y\subset A_x$ and the join $\bigvee Y$ exists in $A$. Since for every $y$ in $Y$, $y\in A_x \subset [0_x,1_x]$, we obtain that also $\bigvee Y \in [0_x,1_x]$. And since multiplication distributes with respect to all existing joins, we have that $1_x\cdot \bigvee Y = \bigvee_{y\in Y} 1_x\cdot y = \bigvee_{y\in Y} y = \bigvee Y$. Thus, by Lemma~\ref{lem:char:Ax}, $\bigvee Y \in A_x$. The closure under all existing nonempty meets can be obtained from the fact that $A_x$ is also closed under negations and $\bigwedge Y = \no\bigvee_{y\in Y} \nein y$.\QED
\end{enumerate}\def\QED{}
\end{proof}

Our next goal is to find a canonical representative for each equivalence class~$A_x$. 

\begin{lemma}\label{lem:char:bounds:integral:components}
Let $\m A$ be a locally integral ipo\nbd-semigroup. For every $x$ in $A$, $1_x$ is the only positive element of $A_x$ and $0_x$ is the only element of $A_x\cap A^\sim$.
\end{lemma}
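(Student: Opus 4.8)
The plan is to exhibit the two claimed canonical representatives explicitly and then invoke the fixed-point characterizations of positive and negative elements from Lemma~\ref{lem:char:positives:and:zeros} to force uniqueness. Before proving uniqueness, I would first check that $1_x$ and $0_x$ actually belong to the class $A_x = \{y : 1_y = 1_x\}$ and to $A^+$, $A^\sim$ respectively. For $1_x$, membership in $A_x$ is the identity $1_{1_x} = 1_x$ already established inside the proof of Lemma~\ref{lem:local:identities}(4), while $1_x\in A^+$ is one of the defining conditions of local integrality. For $0_x$, the computation $1_{0_x} = 0_x/0_x = \no 0_x = 1_x$ recorded at the end of the proof of Lemma~\ref{lem:char:positives:and:zeros} shows $0_x\in A_x$, and $0_x\in A^\sim$ holds because $A^\sim = \{0_x : x\in A\}$. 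Thus both proposed witnesses genuinely sit in the relevant sets.

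For uniqueness of the positive element, I would take any $p\in A_x$ with $p\in A^+$. By Lemma~\ref{lem:char:positives:and:zeros}, positivity gives $1_p = p$, while $p\in A_x$ gives $1_p = 1_x$; chaining these yields $p = 1_x$. Hence $1_x$ is the only positive member of $A_x$.

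For uniqueness of the element of $A_x\cap A^\sim$, I would take any $a\in A_x\cap A^\sim$. From $a\in A^\sim$ the same lemma gives $0_a = a$, and from $a\in A_x$ we get $1_a = 1_x$; applying the identity $0_y = \nein 1_y$ of Lemma~\ref{lem:local:identities}(1) then gives $0_a = \nein 1_a = \nein 1_x = 0_x$. Therefore $a = 0_a = 0_x$, so $0_x$ is the only element of $A_x\cap A^\sim$.

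I do not expect any genuinely hard step: the whole argument is a matter of chaining the two equivalences $p\in A^+\iff 1_p = p$ and $a\in A^\sim\iff 0_a = a$ with the definition of $A_x$. The only point requiring a little attention is verifying that the representatives $1_x$ and $0_x$ lie in their own class $A_x$, which I would handle by citing the idempotence-type facts $1_{1_x}=1_x$ and $1_{0_x}=1_x$ from the earlier lemmas rather than re-deriving them.
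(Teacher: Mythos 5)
Your proof is correct and follows essentially the same route as the paper: membership of $1_x$ and $0_x$ in $A_x$ is checked via the identities $1_{1_x}=1_x$ and $1_{0_x}=1_x$ (the paper instead invokes Lemma~\ref{lem:char:Ax} and closure of $A_x$ under negations, but these rest on the same computations), and uniqueness in both cases is obtained exactly as in the paper by chaining the fixed-point characterizations $p\in A^+\iff 1_p=p$ and $a\in A^\sim\iff 0_a=a$ from Lemma~\ref{lem:char:positives:and:zeros} with the definition of $A_x$.
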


\begin{shortproof}
Since $1_x\in [0_x,1_x]$ and also $1_x\cdot 1_x = 1_x$, we have that $1_x \in A_x$, by Lemma~\ref{lem:char:Ax}. Now, for any positive $p\in A_x$, we have that $p = 1_p = 1_x$, by Lemma~\ref{lem:char:positives:and:zeros}. The second part follows from Lemma~\ref{lem:char:positives:and:zeros}, antitonicity, and the fact that $A_x$ is closed under the involutive negations.
\end{shortproof}

\begin{remark}\label{rem:Ap:partition:of:A}
Notice that the previous lemma tells us that for every $x$ in $A$, there is only one positive element $p$ such that $A_x = A_p$. This means that the family $\{A_x : x\in A\}$ is actually indexed by $A^+$ and that for all $p,q\in A^+$, we have $A_p = A_q$ if and only if $p=q$. 
\end{remark}

We can now show that the relation and operations of a locally integral ipo\nbd-semigroup furnish each equivalence class $A_p$ with the structure of an integral ipo\nbd-monoid $\m A_p$. We call every $\m A_p$ an \emph{integral component} of $\m A$.

\begin{proposition}\label{prop:integral:components}
If $\m A$ is a locally integral ipo\nbd-semigroup, then for every $p\in A^+$, the structure $\m A_p = (A_p, \le, \cdot, 1_p, \nein, \no)$, where the relation and the operations are the restrictions to $A_p$ of the corresponding relation and operations of $\m A$, is an integral ipo\nbd-monoid. If in addition $\m A$ is a semiring, cyclic, or commutative, then every $\m A_p$ is also a semiring, cyclic, or commutative, respectively.
\end{proposition}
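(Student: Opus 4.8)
The plan is to verify directly that $\m A_p$ meets the definition of an integral ipo\nbd-monoid, exploiting that almost all of the work has already been done. Lemma~\ref{lem:Ax:closed:negations:mult:meets:joins} tells us that $A_p$ is closed under $\cdot$, $\nein$, and $\no$, and Lemma~\ref{lem:char:positives:and:zeros} gives $1_p = p$, so in particular $1_p\in A_p$. I would first establish that $(A_p,\le,\cdot,\nein,\no)$ is an ipo\nbd-semigroup, then identify $1_p$ as a global identity that is simultaneously the top element, and finally appeal to the correspondence between ipo\nbd-semigroups with a global identity and ipo\nbd-monoids discussed in Section~\ref{sec:ipo-semigroups} to conclude that $\m A_p$ is integral.

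The crucial observation is that the ipo\nbd-semigroup axioms are all universally quantified (in)equalities or biconditionals among terms built from the primitive operations $\le$, $\cdot$, $\nein$, $\no$: the poset axioms, associativity of $\cdot$, antitonicity of the negations, double negation $\nein\no x = x = \no\nein x$, and rotation $xy \le z \iff y\cdot\nein z \le \nein x \iff \no z\cdot x \le \no y$. Since $A_p$ is closed under $\cdot$, $\nein$, $\no$ and inherits the order of $\m A$, every instance of these conditions with parameters in $A_p$ is literally an instance of the same condition in $\m A$, and therefore holds. Thus $\m A_p$ is an ipo\nbd-semigroup. This is precisely where closure matters: the residuals (and, in the semiring case, the lattice operations) are not primitive, but they are term-definable from $\cdot$, $\nein$, $\no$, so their values on $A_p$ again land in $A_p$ and agree with those of $\m A$.

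Next I would check that $1_p$ is a two-sided identity on $A_p$: for $y\in A_p$ we have $1_y = 1_p$ by definition of $A_p$, and Lemma~\ref{lem:local:identities} gives $1_y\cdot y = y$ and $y\cdot 1_y = y$, so $1_p y = y = y 1_p$. Moreover $A_p \subseteq [0_p,1_p]$ by Lemma~\ref{lem:char:Ax}, hence $y\le 1_p$ for every $y\in A_p$, so $1_p$ is the top element of $(A_p,\le)$. An ipo\nbd-semigroup with a global identity that is also the top element is integral and expands uniquely to an ipo\nbd-monoid, so $\m A_p = (A_p,\le,\cdot,1_p,\nein,\no)$ is an integral ipo\nbd-monoid.

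For the additional properties, cyclicity ($\nein x = \no x$) and commutativity ($xy = yx$) are universally quantified identities in the primitives, so they transfer to $A_p$ by the same restriction argument. For the semiring case, $\m A$ being an i$\ell$\nbd-semigroup means $(A,\le)$ is a lattice; for $x,y\in A_p$ the binary join $x\lor y$ and meet $x\land y$ exist in $\m A$ and, by Lemma~\ref{lem:Ax:closed:negations:mult:meets:joins}, lie in $A_p$, where they remain the join and meet for the inherited order. Hence $(A_p,\le)$ is a lattice and $\m A_p$ is an involutive semiring, which, being integral, is unital. The only genuinely delicate point is the implicit one noted above---that the derived operations of $\m A_p$ coincide with the restrictions of those of $\m A$---but since the defining conditions of an (integral) ipo\nbd-monoid are phrased entirely in the primitive operations, under which $A_p$ is closed, there is no real obstacle; the substance of the argument, namely the closure of $A_p$, has already been discharged in the preceding lemmas.
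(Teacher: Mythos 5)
Your proposal is correct and follows essentially the same route as the paper: closure of $A_p$ under the primitive operations (Lemma~\ref{lem:Ax:closed:negations:mult:meets:joins}) plus $1_p=p\in A_p$ (Lemma~\ref{lem:char:positives:and:zeros}) make $\m A_p$ a well-defined ipo\nbd-semigroup, then $1_p\cdot x = 1_x\cdot x = x$ and $A_p\subset[0_p,1_p]$ (Lemma~\ref{lem:char:Ax}) give integrality, and cyclicity, commutativity, and the lattice operations are inherited by restriction. Your added remarks on why the universally quantified axioms restrict and on the two-sidedness of the identity only make explicit what the paper leaves implicit.
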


\begin{proof}
By Lemma~\ref{lem:Ax:closed:negations:mult:meets:joins}, every $A_p$ is closed under multiplication and involutive negations, and by Lemma~\ref{lem:char:positives:and:zeros}, $1_p = p \in A_p$. Therefore, the structure $\m A_p$ is well defined and the reduct $(A_p, \le, \cdot, \nein, \no)$ is an ipo\nbd-semigroup. Moreover, $1_p\cdot x = 1_x\cdot x = x$ for all $x\in A_p$ and by Lemma~\ref{lem:char:Ax}, $A_p \subset [0_p,1_p]$, and consequently $1_p$ is the top element of $(A,\le)$. Hence, $(A_p, \le, \cdot, \nein, \no)$ is integral and $1_p$ is its global identity, rendering $\m A_p$ an integral ipo\nbd-monoid. Obviously, cyclicity and commutativity are hereditary properties. Finally, the proof for the locally integral involutive semirings follows from the fact that $A_p$ is also closed under all existing binary joins and meets, by Lemma~\ref{lem:Ax:closed:negations:mult:meets:joins}.
\end{proof}

\begin{figure}
\centering
\begin{tikzpicture}[scale=1.25, every label/.append style={font=\scriptsize}]
\node at (0,3)[n]{\scriptsize$\m A_p$};
\node at (-.6,.9)[n]{\scriptsize$A^\sim$};
\node at (-.7,5.1)[n]{\scriptsize$A^+$};
\node at (1.3,5.5)[n]{\scriptsize$\m A$};

\draw (0,0)..controls(-3,2)and(-3,4)..(0,6)..controls(3,4)and(3,2)..(0,0)
(-2,4)--(-1.4,4.5)--(-.8,4.2)--(0,4.7)--(.8,4.2)--(1.4,4.5)--(2,4)
(-2,2)--(-1.4,1.5)--(-.8,1.8)--(0,1.3)--(.8,1.8)--(1.4,1.5)--(2,2)
(0,1) node(0p)[label=-45:$0_p$]
{}..controls(-1.33,2.33)and(-1.33,3.66)..(0,5) node(1p)[label=45:$1_p$]{}..controls(1.33,3.66)and(1.33,2.33)..(0,1);
\end{tikzpicture}
\qquad\qquad\qquad
\begin{tikzpicture}[scale=1.25, every label/.append style={font=\scriptsize}]
\node at (0,3)[n]{\scriptsize$\m A_p$};
\node at (-.6,.9)[n]{\scriptsize$A^\sim$};
\node at (-.7,5.1)[n]{\scriptsize$A^+$};
\node at (-1.8,3)[n]{\scriptsize$\m A_1$};
\node at (1.3,5.5)[n]{\scriptsize$\m A$};

\draw (0,0)..controls(-3,2)and(-3,4)..(0,6)..controls(3,4)and(3,2)..(0,0)
(-2,4) node(1)[label=left:$1$]{}..controls(-1,5)and(0.75,5.1)..(1.15,5.1)
(-2,2) node(0)[label=left:$0$]{}..controls(-1,1)and(0.75,0.9)..(1.15,0.9)
(1)..controls(-1.25,3.5)and(-1.25,2.5)..(0)
(0,1) node(0p)[label=-45:$0_p$]{}..controls(-1.33,2.33)and(-1.33,3.66)..(0,5) node(1p)[label=45:$1_p$]{}..controls(1.33,3.66)and(1.33,2.33)..(0,1);
\end{tikzpicture}
\caption{Representation of the structure of a locally integral ipo\nbd-semigroup and ipo-monoid.}
\label{fig:schema:structure:ipomonoid}
\end{figure}
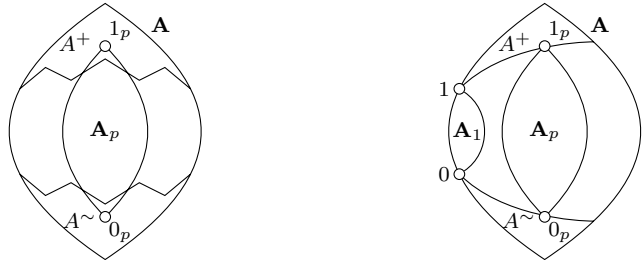

What we have proven so far is that every locally ipo\nbd-semigroup $\m A$ can be broken into a disjoint union of a family of ipo\nbd-monoids $\{\m A_p : p\in A^+\}$. But, obviously, $\m A$ is not the mere ``union'' of these ipo\nbd-monoids: the elements from different components can be multiplied and compared with each other. This is the content of the following two lemmas.

\begin{lemma}\label{lem:prod:in:Aprod}
Given a locally integral ipo\nbd-semigroup $\m A$, positive elements $p$ and $q$, and elements $x\in A_p$ and $y\in A_q$, the product $xy$ is in $A_{pq}$.
\end{lemma}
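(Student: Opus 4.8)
The plan is to prove membership in $A_{pq}$ by verifying the two conditions of the characterization in Lemma~\ref{lem:char:Ax}(\ref{lem:part:char:Ax}), applied to the positive element $pq$: namely that $xy\in[0_{pq},1_{pq}]$ and that $1_{pq}\cdot(xy) = xy$. Before starting I would record the basic data attached to the hypotheses. Since $p,q\in A^+$, Lemma~\ref{lem:char:positives:and:zeros} gives $1_p=p$, $1_q=q$, and likewise $1_{pq}=pq$ because $pq=p\lor q$ is positive by Proposition~\ref{prop:A+:join:semilattice}. From $x\in A_p$ and $y\in A_q$ we then have $1_x=p$ and $1_y=q$, so $px = 1_x\cdot x = x$ and $qy = 1_y\cdot y = y$; moreover, by Lemma~\ref{lem:char:Ax}(1) and the description $A_p\subset[0_p,1_p]$, we get $0_p\le x\le p$ and $0_q\le y\le q$.

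The second condition is the easy one, and I would dispatch it first using centrality. By Proposition~\ref{prop:positive:are:central} the positive elements $p$ and $q$ commute with everything, so
\[
1_{pq}\cdot(xy) = (pq)(xy) = p(xq)y = (px)(qy) = xy,
\]
where the reordering uses that $q$ is central and the final simplification uses $px=x$ and $qy=y$. The upper bound $xy\le 1_{pq}$ is equally direct: from $x\le p$ and $y\le q$ and the order-preservation of multiplication we obtain $xy\le pq = 1_{pq}$.

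The only delicate point will be the lower bound $0_{pq}\le xy$; this is the step I expect to be the main obstacle, since products are square-decreasing and one cannot simply multiply the inequalities $0_p\le x$ and $0_q\le y$ against $0_{pq}$ in a naive way. The key observation is that $0_{pq}$ is idempotent (Proposition~\ref{prop:char:locally:integral}(4)) and lies below both $0_p$ and $0_q$: indeed $0_{pq}=\nein(pq)$ and $0_p=\nein p$, and antitonicity turns $p\le pq$ into $0_{pq}\le 0_p$, and similarly $0_{pq}\le 0_q$. Combining these facts with $0_p\le x$ and $0_q\le y$ and the monotonicity of multiplication yields
\[
0_{pq} = 0_{pq}\cdot 0_{pq} \le 0_p\cdot 0_q \le x\cdot y,
\]
which is exactly the missing bound.

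Putting the pieces together, $xy\in[0_{pq},1_{pq}]$ and $1_{pq}\cdot(xy)=xy$, so Lemma~\ref{lem:char:Ax}(\ref{lem:part:char:Ax}) gives $xy\in A_{pq}$, completing the argument. I would present it in the order above — data, the identity action via centrality, the upper bound, then the idempotency trick for the lower bound — so that the one genuinely nonroutine move, the use of idempotency of $0_{pq}$, stands out as the crux.
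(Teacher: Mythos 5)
Your proof is correct and follows essentially the same route as the paper: both verify the two conditions of Lemma~\ref{lem:char:Ax}(\ref{lem:part:char:Ax}), using centrality of positive elements for $1_{pq}\cdot(xy)=xy$ and the idempotency of $0_{pq}$ together with monotonicity of multiplication for the interval bounds. The paper merely packages your ``idempotency trick'' as the previously noted fact that each interval $[0_{pq},1_{pq}]$ is closed under products and contains $A_p\cup A_q$.
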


\begin{proof}
The inequalities $1_p = p \le pq = 1_{pq}$ and $1_q \le 1_{pq}$ imply that $A_p\cup A_q \subset [0_p,1_p]\cup [0_q,1_q] \subset [0_{pq}, 1_{pq}]$, and therefore $x,y\in [0_{pq}, 1_{pq}]$, whence we deduce that $xy\in  [0_{pq}, 1_{pq}]$. Moreover, $1_{pq}\cdot(xy) = pqxy = pxqy = 1_x\cdot x\cdot 1_y\cdot y=xy$, by Proposition~\ref{prop:positive:are:central}. Hence, by Lemma~\ref{lem:char:Ax}, $xy\in A_{pq}$.
\end{proof}

\begin{lemma}\label{lem:A:is:balanced}
Let $\m A$ be a locally integral ipo\nbd-semigroup and $p$ and $q$ positive elements. Then for all $x \in A_p$ and $y \in A_q$,
\[
x\le y \iff x \cdot \nein y = 0_{pq} \iff \no y \cdot x = 0_{pq}.
\]
\end{lemma}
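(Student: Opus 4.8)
The plan is to prove the two biconditionals in parallel, exploiting the fact that both $x\cdot\nein y$ and $\no y\cdot x$ are already pinned to the single component $A_{pq}$, whose bottom element is $0_{pq}$. First I would note that, since $A_p$ and $A_q$ are closed under the involutive negations (Lemma~\ref{lem:Ax:closed:negations:mult:meets:joins}), we have $\nein y\in A_q$ and $\no y\in A_q$, so by Lemma~\ref{lem:prod:in:Aprod} both $x\cdot\nein y$ and $\no y\cdot x$ lie in $A_{pq}$. By Lemma~\ref{lem:char:Ax} every element of $A_{pq}$ sits in the interval $[0_{pq},1_{pq}]$, hence $0_{pq}\le x\cdot\nein y$ and $0_{pq}\le\no y\cdot x$ hold automatically. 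Consequently each target equality is equivalent to the \emph{inequality} $x\cdot\nein y\le 0_{pq}$, respectively $\no y\cdot x\le 0_{pq}$, and it suffices to characterise these.

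Next I would apply rotation to eliminate the negation. Using $\no 0_{pq}=1_{pq}=\nein 0_{pq}$ (Lemma~\ref{lem:local:identities}) together with double negation, rotation turns $x\cdot\nein y\le 0_{pq}$ into $1_{pq}\cdot x\le y$, and $\no y\cdot x\le 0_{pq}$ into $x\cdot 1_{pq}\le y$. The key simplification is then that $1_{pq}\cdot x$ collapses to $qx$: since $x\in A_p$ we have $1_x=p$ and therefore $p\cdot x=1_x\cdot x=x$, while positivity of the factors makes them central (Proposition~\ref{prop:positive:are:central}), so $1_{pq}\cdot x=(pq)x=(qp)x=q(px)=qx$; symmetrically $x\cdot 1_{pq}=(xp)q=xq$. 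Thus the two inequalities reduce to $qx\le y$, respectively $xq\le y$.

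Finally I would show $qx\le y\iff x\le y$ (and likewise for $xq$). For the forward direction, positivity of $q$ gives $x\le qx\le y$. For the converse, monotonicity of multiplication gives $qx\le qy$, and because $y\in A_q$ we have $1_y=q$ and hence $qy=1_y\cdot y=y$, so $qx\le y$; the argument for $xq$ uses $yq=y\cdot 1_y=y$ instead. The main obstacle I anticipate is resisting the temptation to absorb all of $1_{pq}$ into $x$: in general $p\le pq$ may be strict, so $1_{pq}\cdot x\ne x$, and only the $p$-factor is absorbed on the $x$ side while the surviving $q$-factor must be absorbed on the $y$ side. Keeping track of which local identity acts trivially on which element, namely $p$ on $x$ and $q$ on $y$, is the delicate bookkeeping that makes the reduction to $x\le y$ go through.
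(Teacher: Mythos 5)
Your proposal is correct and follows essentially the same route as the paper: both reduce the equalities to inequalities via $x\cdot\nein y,\ \no y\cdot x\in A_{pq}\subseteq[0_{pq},1_{pq}]$, then use rotation together with centrality of positive elements and absorption by local identities. The only cosmetic difference is that you finish by showing $qx\le y\iff x\le y$ directly (via $x\le qx$ and $qy=y$), whereas the paper runs a chain of equivalences back through $0_p$; the content is the same.
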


\begin{proof}
By Lemmas~\ref{lem:Ax:closed:negations:mult:meets:joins}, \ref{lem:prod:in:Aprod}, and~\ref{lem:char:Ax}, both $x \cdot \nein y$ and $\no y \cdot x$ are in $A_{pq} \subseteq [0_{pq},1_{pq}]$. Hence $0_{pq} \le x \cdot \nein y$ and $0_{pq} \le \no y \cdot x$. It therefore suffices to prove that $x\le y \iff x \cdot \nein y \le 0_{pq} \iff \no y \cdot x \le 0_{pq}$. For the first equivalence, using~(rot) and Proposition~\ref{prop:positive:are:central} we obtain that
\begin{align*}
x\le y &\iff 1_p x\le y \iff x\cdot\nein y \le 0_p \iff x 1_q\cdot\nein y \le 0_p \\
&\iff 1_p x 1_q \le y \iff 1_p1_qx \le y \iff 1_{pq}x\le y \iff x\cdot\nein y \le 0_{pq}.
\end{align*}
Also, $x \cdot \nein y \le 0_{pq} \iff 1_{pq} \cdot x \le y \iff x \cdot 1_{pq} \le y \iff \no y \cdot x \le 0_{pq}$.
\end{proof}

All these results point toward the idea that locally integral ipo\nbd-semigroups are built up from integral ipo\nbd-monoids, or at least their semigroup reducts are, by means of a P\l{}onka sum. This construction was first introduced and studied in~\cite{Plonka67,Plonka68a,Plonka68b}; for more recent expositions see~\cite{PR92} and~\cite{BPP22}. 
Let $(I, \lor)$ be a join-semilattice. A family of homomorphisms $\{\varphi_{ij}\: \m A_i\to \m A_j : i\le j \in I\}$ between algebras of the same type is said to be \emph{compatible} if for every $i\in I$, $\varphi_{ii}$ is the identity on $\m A_i$, and that if $i \le j \le k$ then $\varphi_{jk}\circ\varphi_{ij} = \varphi_{ik}$. Given such a compatible family of homomorphisms, its \emph{P\l{}onka sum} is an algebra $\m S$ defined on the disjoint union of their universes $S = \biguplus_{i\in I} A_i$. For every nonconstant $n$-ary operation symbol $\sigma$ and elements $a_1\in A_{i_1}$, \dots, $a_n\in A_{i_n}$, $\sigma^{\m S}(a_1,\dots,a_n) = \sigma^{\m A_j}(\varphi_{i_1j}(a_1),\dots,\varphi_{i_nj}(a_n))$, where $j = i_1\lor \dots\lor i_n$. One can readily prove that the P\l{}onka sum of a compatible family of homomorphisms is well-defined and it satisfies all regular equations that hold in all the algebras of the family which do not involve constants. Recall that a \emph{regular} equation is an equation in which the variables that appear on the left-hand side are the same as the variables that appear on the right-hand side.

Given a locally integral ipo\nbd-semigroup $\m A$, consider the map $\varphi_{pq}\: A_p \to A_q$ given by $\varphi_{pq}(x) = qx$, for every pair of positive elements $p \le q$. We will show that $\Phi = \{\varphi_{pq} : p\le q\}$ is a compatible family of monoid homomorphisms between the family of algebras $\{\m A_p : p\in A^+\}$, and that the semigroup reduct of $\m A$ can be reconstructed as the P\l{}onka sum of $\Phi$.\footnote{~The general theory (\cite{BPP22}) tells us that P\l{}onka sums can be described using partition functions. We note that, in this particular setting, the partition function is defined by $a\odot b = 1_b\cdot a$.}

\begin{lemma}
Let $\m A$ be a locally integral ipo\nbd-semigroup and $p \le q$ two positive elements. Then $\varphi_{pq}\: \m A_p \to \m A_q$ is a well defined monoid homomorphism. Moreover, it respects arbitrary nonempty existing joins and is therefore monotone.
\end{lemma}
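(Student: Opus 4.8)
The plan is to establish the four claims in turn: that $\varphi_{pq}$ takes values in $A_q$, that it preserves the identity, that it preserves products, and finally that it respects nonempty existing joins, from which monotonicity drops out. Each step is meant to be read off the structural lemmas already proved about the classes $A_p$.

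First I would check well-definedness. Fix $x\in A_p$. Since $q\in A^+$, Lemma~\ref{lem:char:positives:and:zeros} gives $q = 1_q \in A_q$, so $qx$ is a product of an element of $A_q$ by an element of $A_p$ and therefore lies in $A_{qp}$ by Lemma~\ref{lem:prod:in:Aprod}. Because $p\le q$ in the join-semilattice $\m A^+$ (Proposition~\ref{prop:A+:join:semilattice}), we have $qp = p\lor q = q$, and hence $\varphi_{pq}(x)=qx\in A_q$. The same identity $qp=q$, together with $1_p=p$, immediately gives preservation of the unit: $\varphi_{pq}(1_p) = q\cdot p = q = 1_q$.

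The substantive step is multiplicativity, and this is where I expect the only real content to lie. Given $x,y\in A_p$, I would use that $q$ is central (Proposition~\ref{prop:positive:are:central}) and idempotent (indeed $qq = 1_q1_q = 1_q = q$ by Lemma~\ref{lem:local:identities}(4)) to compute
\[
\varphi_{pq}(x)\,\varphi_{pq}(y) = (qx)(qy) = q(xq)y = q(qx)y = (qq)(xy) = q(xy) = \varphi_{pq}(xy),
\]
so that $\varphi_{pq}$ is a monoid homomorphism from $\m A_p$ to $\m A_q$. The centrality of positive elements is exactly what makes the two middle equalities go through; without it the two composite images would not collapse, so this is the crux of the argument.

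For the join clause, suppose $\emptyset\neq X\subseteq A_p$ and $\bigvee X$ exists in $\m A$; by Lemma~\ref{lem:Ax:closed:negations:mult:meets:joins}(3) this join already lies in $A_p$. Since multiplication preserves all existing joins, the join of the images exists and
\[
\varphi_{pq}\Big(\bigvee X\Big) = q\cdot\bigvee X = \bigvee_{x\in X} qx = \bigvee_{x\in X}\varphi_{pq}(x),
\]
where, again by Lemma~\ref{lem:Ax:closed:negations:mult:meets:joins}(3) applied in $A_q$, this join may equally be taken inside $\m A_q$. Monotonicity is then a formal consequence: if $x\le y$ in $A_p$ then $x\lor y = y$, and applying the displayed identity to $X=\{x,y\}$ yields $\varphi_{pq}(x)\lor\varphi_{pq}(y) = \varphi_{pq}(y)$, i.e.\ $\varphi_{pq}(x)\le\varphi_{pq}(y)$.
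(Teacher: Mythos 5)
Your proof is correct and follows essentially the same route as the paper's: well-definedness via $qx\in A_{qp}$ and $qp=q$, unit preservation from $q1_p=qp=q$, multiplicativity from the centrality and idempotence of the positive element $q$, and join preservation from the fact that multiplication distributes over existing joins. The only (harmless) additions are your explicit derivation of monotonicity from binary join preservation and the remark that the joins stay inside $A_p$ and $A_q$, both of which the paper leaves implicit.
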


\begin{proof}
For all positive elements $p$ and $q$, and $x\in A_p$, we have that $qx \in A_{qp}$, by Lemma~\ref{lem:prod:in:Aprod}. Moreover, by Proposition~\ref{prop:A+:join:semilattice}, the inequality $p \le q$ implies that $qp = q$. Hence, the map $\varphi_{pq}\: A_p \to A_q$ is well defined. Furthermore, $\varphi_{pq}(1_p) = q1_p = qp = q = 1_q$ and for all $x,y\in A_p$,
\[
\varphi_{pq}(x\cdot y) = qxy = qqxy = qxqy = \varphi_{pq}(x)\cdot\varphi_{pq}(y),
\]
since $q$ is positive and therefore idempotent and central, by Proposition~\ref{prop:positive:are:central}. This shows that $\varphi_{pq}$ is a monoid homomorphism. Finally, if $\emptyset \neq Y\subset A_p$ is such that $\bigvee Y$ exists, then $\varphi_{pq}\big(\bigvee Y\big) = q\cdot \bigvee Y = \bigvee_{y\in Y} qy = \bigvee_{y\in Y} \varphi_{pq}(y)$.
\end{proof}

\begin{proposition}
Let $\m A$ be a locally integral ipo\nbd-semigroup. Then, its associated family $\Phi = \{\varphi_{pq}\:$ $\m A_p\to \m A_q : p\le q\}$ is a compatible family of monoid homomorphisms indexed by the order of the join semilattice $\m A^+$. 
\end{proposition}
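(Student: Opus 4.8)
The plan is to observe that the substantive content---that each $\varphi_{pq}$ is a well-defined monoid homomorphism for $p\le q$---has already been secured by the preceding lemma. Hence what remains is merely to verify the two defining conditions of a compatible family stated earlier in this section: that $\varphi_{pp}$ is the identity on $\m A_p$, and that $\varphi_{qr}\circ\varphi_{pq} = \varphi_{pr}$ whenever $p\le q\le r$ in $A^+$. Both will reduce to associativity of multiplication together with the semilattice identity $rq = r$ valid for $q\le r$ in $\m A^+$.

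First I would check the reflexivity condition. For $x\in A_p$ we have $1_x = 1_p$ by the definition of the class $A_p$, and $p = 1_p$ by Lemma~\ref{lem:char:positives:and:zeros} since $p$ is positive; hence $\varphi_{pp}(x) = px = 1_px = 1_xx = x$ by local integrality, so $\varphi_{pp}$ is the identity on $A_p$. Next, for positive elements $p\le q\le r$ and $x\in A_p$, I would compute the composite directly: $(\varphi_{qr}\circ\varphi_{pq})(x) = \varphi_{qr}(qx) = r(qx) = (rq)x$ by associativity. Since $q\le r$ in the join-semilattice $\m A^+$, Proposition~\ref{prop:A+:join:semilattice} gives $rq = q\lor r = r$, and therefore $(rq)x = rx = \varphi_{pr}(x)$. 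This establishes $\varphi_{qr}\circ\varphi_{pq} = \varphi_{pr}$.

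There is no genuine obstacle here: both verifications are essentially one-line computations once the previous lemma is in hand, and the homomorphism property need not be revisited. The only point worth stressing is that the collapse $r(qx) = rx$ rests entirely on identifying the product on $A^+$ with the join of the semilattice order, which is exactly the content of Proposition~\ref{prop:A+:join:semilattice}; without that identification the composite $r(qx)$ would not visibly simplify to $\varphi_{pr}(x)$. Thus the difficulty of the statement lies wholly in the structural results already proved, and the compatibility claim itself is a direct consequence of them.
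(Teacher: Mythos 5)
Your proposal is correct and follows essentially the same route as the paper's proof: both verifications reduce to $\varphi_{pp}(x)=px=1_px=x$ for $x\in A_p$ and to $\varphi_{qr}(\varphi_{pq}(x))=rqx=rx=\varphi_{pr}(x)$ via $rq=r$ from Proposition~\ref{prop:A+:join:semilattice}. The extra detail you supply (citing Lemma~\ref{lem:char:positives:and:zeros} for $p=1_p$ and local integrality for $1_xx=x$) just makes explicit what the paper leaves implicit.
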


\begin{proof}
For every positive element $p$ and $x\in A_p$, we have $\varphi_{pp}(x) = px = 1_p x = x$, since $x\in A_p$. That is, $\varphi_{pp}$ is the identity homomorphism on $\m A_p$. And if $p \le q \le r$ are positive elements, then $\varphi_{qr}(\varphi_{pq}(x)) = rqx = rx = \varphi_{pr}(x)$, since $rq = r$ by Proposition~\ref{prop:A+:join:semilattice}, because $q \le r$.
\end{proof}

As we will show next, the semigroup reduct of a locally integral ipo\nbd-semigroup is the P\l{}onka sum of (the semigroup reducts of) the family above. Moreover, although this is not the case for the rest of the structure, we can still recover it from its integral components by virtue of Lemma~\ref{lem:A:is:balanced}. Also, we saw in Proposition~\ref{prop:integral:components} that some properties of $\m A$ are inherited by each of its integral components. Sometimes the converse is also true. We call a property of ipo\nbd-semigroups \emph{local} whenever an ipo\nbd-semigroup has it if and only if all its integral components have it.

\begin{theorem}\label{thm:decomposition:ipo-monoids}
Let $\m A$ be a locally integral ipo\nbd-semigroup and $\Phi$ its associated family of monoid homomorphisms defined above. Then, its P\l{}onka sum $\big(\biguplus_{p\in A^+} A_p, \cdot^{\m S}\big)$ is the semigroup reduct of $\m A$, where $x\cdot^{\m S} y=\varphi_{pr}(x)\cdot^{\m A_r}\varphi_{qr}(y)$ for $x\in A_p, y\in A_q$ and $r=pq$. Moreover, if we define $\nein^{\m S} x = \nein^{\m A_p} x$ and $\no^{\m S} x = \no^{\m A_p} x$, for every $x\in A_p$ with $p$ positive, and
\[
x \le^{\m S} y \iff x\cdot^{\m S} \nein^{\m S} y = 0_{pq}, \quad \text{for all $x\in A_p$ and $y\in A_q$},
\]
then $\m S = \big(\biguplus A_p, \le^{\m S}, \cdot^{\m S}, \nein^{\m S}, \no^{\m S} \big)$ is $\m A$. Furthermore, cyclicity and commutativity are local properties.
\end{theorem}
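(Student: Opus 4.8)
The plan is to verify three things in turn: that the P\l{}onka sum construction reproduces the semigroup reduct of $\m A$, that the componentwise negations reproduce the negations of $\m A$, and that the order defined from products and zeros reproduces the order of $\m A$. The first two are essentially bookkeeping on top of results already established, while the third is where the genuine work lies—though Lemma~\ref{lem:A:is:balanced} has been set up precisely to handle it.

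First I would confirm the multiplication. For $x\in A_p$ and $y\in A_q$, set $r=pq$. By the definition of the P\l{}onka sum, $x\cdot^{\m S} y = \varphi_{pr}(x)\cdot^{\m A_r}\varphi_{qr}(y) = (rx)(ry)$, computed in $\m A$. Using that $r=pq=1_{pq}$ is positive, hence idempotent and central (Proposition~\ref{prop:positive:are:central}), together with $1_p x = x$ and $1_q y = y$, I would simplify $(rx)(ry) = r\,x\,r\,y = r\,r\,x\,y = r\,x\,y = pq\,x\,y = 1_p x\,1_q y = xy$, where the product on the right is taken in $\m A$. This shows $\cdot^{\m S}$ agrees with $\cdot^{\m A}$ on $\biguplus A_p = A$, using Remark~\ref{rem:Ap:partition:of:A} to see the disjoint union is literally $A$. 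Next, the negations are defined componentwise, and since $\m A_p$ is a \emph{substructure} of $\m A$ for the negations (Lemma~\ref{lem:Ax:closed:negations:mult:meets:joins}(1) gives closure, and the operations of $\m A_p$ are by definition the restrictions from $\m A$), we immediately get $\nein^{\m S} x = \nein x$ and $\no^{\m S} x = \no x$ for every $x\in A$.

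The main obstacle is recovering the order. Here the defined relation is $x\le^{\m S} y \iff x\cdot^{\m S}\nein^{\m S} y = 0_{pq}$ for $x\in A_p$, $y\in A_q$. By the first two steps, $x\cdot^{\m S}\nein^{\m S} y = x\cdot\nein y$ computed in $\m A$, so the condition reads $x\cdot\nein y = 0_{pq}$. But this is exactly the middle clause of Lemma~\ref{lem:A:is:balanced}, which states $x\le y \iff x\cdot\nein y = 0_{pq}$. Hence $x\le^{\m S} y$ if and only if $x\le y$ in $\m A$, so the two orders coincide and $\m S = \m A$ as structures. The only subtlety to flag is that $\le^{\m S}$ as defined must be checked to be well-posed (independent of any choices), but since $p$ and $q$ are determined by $x$ and $y$ via $x\in A_p$, $y\in A_q$ and the indexing is by $A^+$ with $A_p=A_q$ forcing $p=q$ (Remark~\ref{rem:Ap:partition:of:A}), there is nothing to choose.

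Finally, for locality of cyclicity and commutativity: Proposition~\ref{prop:integral:components} already gives the forward direction (each $\m A_p$ inherits the property). For the converse, I would argue that an arbitrary product $xy$ with $x\in A_p$, $y\in A_q$ lands in $A_{pq}$ (Lemma~\ref{lem:prod:in:Aprod}), and via the P\l{}onka description $xy = (1_{pq}x)(1_{pq}y)$ is computed inside the single component $\m A_{pq}$; thus if every component is commutative then $xy=yx$ holds for all $x,y\in A$, and likewise cyclicity $\nein x=\no x$ holds on each component and hence on all of $A$ since $A=\biguplus A_p$. I expect the order-recovery step to be the crux, but it reduces cleanly to Lemma~\ref{lem:A:is:balanced}; the remaining verifications are routine reductions using centrality and idempotence of positive elements.
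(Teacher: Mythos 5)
Your proposal is correct and follows essentially the same route as the paper's proof: the multiplication is simplified using centrality and idempotence of $r=pq$, the negations are handled via the componentwise restrictions, and the order is recovered exactly by Lemma~\ref{lem:A:is:balanced}. The only cosmetic difference is that for locality of commutativity you compute directly inside $\m A_{pq}$ where the paper invokes the general preservation of regular equations by P\l{}onka sums; the content is the same.
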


\begin{proof}
By Remark~\ref{rem:Ap:partition:of:A}, the set $\{A_p : p\in A^+\}$ is a partition of $A$, and therefore $\biguplus A_p = A$. Given two elements $x\in A_p$ and $y\in A_q$, for arbitrary positive elements $p$ and $q$, and $r = pq$, we have that 
\[
x\cdot^{\m S} y = \varphi_{pr}(x)\cdot^{\m A_r} \varphi_{qr}(x) = rx\cdot ry = rrxy = rxy = 1_r\cdot(xy) = xy,
\]
since $r$ is positive, and therefore central and idempotent, and $xy \in A_r$ by Lemma~\ref{lem:prod:in:Aprod}. The involutive negations of every integral component $\m A_p$ are the restrictions of the corresponding operations of $\m A$, by Proposition~\ref{prop:integral:components}, and thus $\nein^{\m S} x = \nein^{\m A_p} x = \nein x$ and $\no^{\m S} x = \no^{\m A_p} x = \no x$.

Notice also that, by virtue of Lemma~\ref{lem:A:is:balanced}, we have that for $x\in A_p$ and $y\in A_q$, with $p$ and $q$ positive,
\[
x\le y \iff x\cdot\nein y = 0_{pq} \iff x\cdot^\m S\nein^\m S y = 0_{pq} \iff x\le^\m S y.   
\]
Finally, $\m A$ is commutative if and only if all its integral components are commutative, since commutativity is expressible by the regular equation $xy = yx$. The same is true for cyclicity because the involutive negations $\nein$ and $\no$ are unary operations defined componentwise.
\end{proof}

The previous theorem covers the structural decomposition results in~\cite{JiTuVa21} (see Theorem~\ref{thm:loc:integ:idempotent} and~Corollary~\ref{cor:idempotent:commutative} below). In that paper it is also shown that the variety of commutative idempotent involutive residuated lattices fails to be locally finite. Without the lattice operations, however, we have the following result. 

\begin{corollary}
Local finiteness is a local property of ipo\nbd-semigroups.
\end{corollary}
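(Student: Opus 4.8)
The plan is to establish that local finiteness is local by proving the two implications separately: if $\m A$ is locally finite then so is every integral component $\m A_p$, and conversely, where by local finiteness of an ipo\nbd-semigroup (resp.\ ipo\nbd-monoid) I mean that each of its finitely generated subalgebras is finite.

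For the forward direction I would take a finite set $Y\subseteq A_p$ and consider the sub-ipo\nbd-semigroup $\m C=\langle Y\rangle$ it generates inside $\m A$, which is finite by hypothesis. Since the negations fix $A_p$ (Lemma~\ref{lem:Ax:closed:negations:mult:meets:joins}) and a product of two elements of $A_p$ stays in $A_p$ (Lemma~\ref{lem:prod:in:Aprod}, as $pp=p$ in $\m A^+$), the set $\m C\cap A_p$ is closed under the operations of $\m A_p$; moreover it contains $1_p$, because for any $y\in Y$ one has $1_p=1_y=\nein(\no y\cdot y)\in\m C$. Hence the sub-ipo\nbd-monoid of $\m A_p$ generated by $Y$ is contained in the finite set $\m C\cap A_p$ (the empty case merely giving the two-element algebra $\{0_p,1_p\}$), so $\m A_p$ is locally finite.

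For the reverse direction, let $X\subseteq A$ be finite and $\m B=\langle X\rangle$ the sub-ipo\nbd-semigroup it generates. First I would localize the problem: the local identities $1_x$ ($x\in X$) generate a finite join-subsemilattice $Q$ of $\m A^+$, since a finitely generated semilattice is finite (Proposition~\ref{prop:A+:join:semilattice}). By Lemmas~\ref{lem:prod:in:Aprod} and~\ref{lem:Ax:closed:negations:mult:meets:joins}, a product sends the component index to the join of the two indices while the negations fix it, so every element of $\m B$ lies in a component $A_q$ with $q\in Q$; thus $\m B=\biguplus_{q\in Q}\m B_q$ with $\m B_q:=\m B\cap A_q$ and $Q$ finite. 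As in the forward direction, each nonempty $\m B_q$ is a sub-ipo\nbd-monoid of the locally finite $\m A_q$ (it contains $1_q=q$, recovered as $1_u$ for any $u\in\m B_q$). It therefore suffices to prove every $\m B_q$ is finitely generated, and the key is the identity
\[
\m B_q=\Big\langle\,(X\cap A_q)\cup\textstyle\bigcup_{r<q,\;r\in Q}\varphi_{rq}(\m B_r)\,\Big\rangle_{\m A_q},
\]
shown by induction on the closure of $X$ under $\cdot,\nein,\no$: the negation and same-level product steps stay inside $\m A_q$, while in the crucial case of a product $w_1w_2\in A_q$ with $w_i\in\m B_{r_i}$ and $r_1\lor r_2=q$ one rewrites $w_1w_2=\varphi_{r_1q}(w_1)\cdot\varphi_{r_2q}(w_2)$ using that $\Phi$ is a compatible family of monoid homomorphisms, displaying any factor from a strictly lower component as an element of $\varphi_{r_iq}(\m B_{r_i})$. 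Granting this identity, a bottom-up induction over the finite poset $Q$ finishes the argument: the generating set on the right is a finite union of finite sets once each $\m B_r$ ($r<q$) is finite, and local finiteness of $\m A_q$ then forces $\m B_q$ to be finite, whence $\m B=\biguplus_{q\in Q}\m B_q$ is finite.

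The step I expect to demand the most care is precisely this localization, because the Płonka maps $\varphi_{pq}$ are only monoid homomorphisms and do \emph{not} commute with the involutive negations -- as one already sees in Figure~\ref{fig:4-element:without:global:identity}, where $\varphi_{p\top}(\nein p)=\top\neq\bot=\nein\,\varphi_{p\top}(p)$. Consequently one cannot generate $\m B_q$ simply from the images of $X$; the correct generating set must instead feed in, level by level, the already computed lower components $\varphi_{rq}(\m B_r)$, which is exactly what forces the induction over $Q$ to proceed bottom-up rather than as a single structural induction on terms.
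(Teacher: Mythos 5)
Your proof is correct and takes essentially the same route as the paper's: both decompose the generated subalgebra along the finite join-closure $Q=J$ of the local identities of the generators, transfer lower components into higher ones via the maps $\varphi_{rq}(y)=qy$, and then invoke local finiteness of each integral component. The only real difference is bookkeeping --- the paper peels off one minimal element $p$ of $J$ at a time and re-generates from $X'=(X\setminus X_p)\cup\{ry : y\in Y_p,\ p<r\in J\}$, while you run a bottom-up induction over $Q$ with an explicit generation identity for each trace $\m B\cap A_q$; your remark that the $\varphi_{pq}$ fail to commute with the negations, forcing the level-by-level feeding of already-closed lower components, is precisely the point of the paper's construction of $X'$ from the closed set $Y_p$ rather than from $X_p$.
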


\begin{proof}
Suppose that the integral components of $\m A$ are locally finite and let $X\subset A$ be a finite set and $J=\{1_x : x\in X\}$. Without loss of generality, we can assume that $J$ is closed under binary joins (i.e., products), and that $J\subset X$. We will prove the proposition by induction on the cardinality of~$J$. Let $p$ be a minimal element in $J$ and $Y_p$ the closure of $X_p = X\cap A_p$ under products and involutive negations. Since $\m A_p$ is locally finite, $Y_p$ is also finite. Consider the finite set $X'=(X\setminus X_p)\cup\{ry : y\in Y_p,\ p < r \in J\}$ and notice that $J' = \{1_x : x\in X'\} = J\setminus\{p\}$, which is closed under binary joins, and $J'\subset X'$. By the inductive hypothesis, the subalgebra $\m B$ generated by $X'$ is finite. And since $J'$ is closed under binary joins, $B\subset \bigcup_{q\in J'} A_q$. Now, for any $y\in Y_p$ and $x\in B$, $yx =(ry)x\in B$ and $xy = x(ry)\in B$, where $r = p\cdot 1_x \in J\setminus\{p\}$. Since both $Y_p$ and $B$ are closed under products and involutive negations, the universe of the subalgebra generated by $X$ is $Y_p\cup B$, which is finite. The reciprocal is obvious.
\end{proof}

\begin{table}
\centering\small\tabcolsep3pt
\begin{tabular}{l|cccccccccc}
Number of elements $=$   &1&2&3&4&5& 6& 7& 8& 9\\\hline
ipo\nbd-semigroups   &1&4&10&48&160&933&4303\\
ipo\nbd-monoids      &1&3&5&20&39&179&500&2525\\\hline
locally integral ipo\nbd-semigroups &1&1&2&6&12&39&90&306\\
locally integral ipo\nbd-monoids &1&1&2&5&9&28&57&194&448\\
integral ipo\nbd-monoids &1&1&1&3&3&13&17&84&145
\end{tabular}
\caption{Number of ipo\nbd-semigroups up to isomorphism. Integral ipo\nbd-monoids are building blocks for locally integral ipo\nbd-semigroups.}
\label{tab:iposemigroups:size-n}
\end{table}

\section{Glueing Constructions}
\label{sec:glueing:constructions}

The last theorem of the previous section shows how every ipo\nbd-semigroup is an aggregate of its integral components. Our next question is, what are the conditions that a family of integral ipo\nbd-monoids and a compatible family of monoid homomorphisms between them should satisfy so that the construction of Theorem~\ref{thm:decomposition:ipo-monoids} is a (locally integral) ipo\nbd-semigroup?

To make this question precise, suppose that $\m D = (D,\lor)$ is a join-semilattice, $\cl A = \{\m A_p : p\in D\}$ is family of integral ipo\nbd-monoids, and $\Phi = \{\varphi_{pq}\: \m A_p\to \m A_q : p \le^{\m D} q\}$ is a compatible family of monoid homomorphisms. We call $(\m D, \cl A, \Phi)$ a \emph{semilattice directed system of integral ipo\nbd-monoids}.\footnote{~From a categorical point of view, a semilattice directed system of integral ipo\nbd-monoids can be understood as a functor $\Phi\:\ct D\to \IIPOMn$ from a skeletal and thin category $\ct D$ with binary coproducts to the category $\IIPOMn$ of integral ipo\nbd-monoids and monoid homomorphisms.\label{foot:categorical:semilat:direct:system}} Letting $\m A_p = (A_p, \le_p, \cdot_p, 1_p, \nein_p, \no_p)$, for all $p$ in $D$, we define the structure
\[
{\textstyle\int_\Phi \m A_p} = \big(\textstyle\biguplus\nolimits_D A_p, \le, \cdot, \nein, \no\,\big),
\]
where $\big(\biguplus_D A_p, \cdot \big)$ is the P\l{}onka sum of the family $\Phi$, and therefore a semigroup, and for all $p,q\in D$, $a\in A_p$, and $b\in A_q$, we define $\nein a = \nein_p a$ and $\no a = \no_p a$, and
\[
a\le b \iff a\cdot \nein b = 0_{p\lor q}.
\]
We call this structure $\int_\Phi \m A_p$ the \emph{glueing of $\cl A$ along the family $\Phi$}. With these definitions, one can see that given a locally integral ipo\nbd-semigroup $\m A$, its semilattice of positive elements $\m A^+$, its family $\cl A$ of integral components, and its associated family $\Phi=\{\varphi_{pq}\: \m A_p\to \m A_q\}$  of monoid homomorphisms determined by $\varphi_{pq}(x) = qx$, we have that $(\m A^+, \cl A, \Phi)$ is a semilattice directed system of integral ipo\nbd-monoids. We can restate Theorem~\ref{thm:decomposition:ipo-monoids} as saying that every locally integral ipo\nbd-semigroup $\m A$ is the glueing $\int_\Phi \m A_p$ of its integral components along the family of homomorphisms $\Phi$.\footnote{~That is to say, $\m A$ is completely described by the functor $\Phi^\m A\:\ct A^+\to\IIPOMn$ such that $\Phi^\m A(p)=\m A_p$ and $\Phi^\m A(p\le q) = \varphi_{pq}$ via the glueing construction. It is worth noticing that the decomposition of $\m A$ also induces a functor $F^\m A\:\ct A^+\to \ct{Poset}$, such that $F^\m A(p) = (A,\le)$ and $F^\m A(p\le q) = \varphi_{pq}$, since these maps are monotone. Hence, the Grothendieck construction of $F^\m A$ is a poset $\int F^\m A = (A,\le^*)$ determined as follows: for every $a\in A_p$ and $b\in A_q$, $a\le^* b$ if and only if $p\le q$ and $\varphi_{pq}(a)\le b$. We can easily see that ${\le^*} \subset {\le}$, but in general these are two distinct partial orders.}

\begin{proposition}\label{prop:glueing:is:an:aggregate}
If $\int_\Phi \m A_p$ is the glueing of a semilattice directed system of integral ipo\nbd-monoids $(\m D, \cl A, \Phi)$, then the restrictions of $\le$, $\cdot$, $\nein$, and $\no$ to $A_p$ are $\le_p$, $\cdot_p$, $\nein_p$, and $\no_p$, respectively. Moreover, for all $p \le^{\m D} q$ and $a\in A_p$, we have that $a\le \varphi_{pq}(a) = 1_q\cdot a$.
\end{proposition}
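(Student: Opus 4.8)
The plan is to verify each claim by unwinding the definitions of the glueing construction against the Płonka sum semantics, exploiting the fact that in any integral ipo\nbd-monoid $\m A_q$ the top element is $1_q$ and $0_q = \no_q 1_q$ is the bottom. First I would handle the multiplication and the negations, which are essentially immediate. For $a,b \in A_p$, the Płonka product sends us to the join $p\lor p = p$ (since $\m D$ is a join-semilattice and $p\le^{\m D} p$), so by the definition of the Płonka sum $a\cdot b = \varphi_{pp}(a)\cdot_p \varphi_{pp}(b)$; compatibility gives $\varphi_{pp} = \mathrm{id}_{A_p}$, so $a\cdot b = a\cdot_p b$. The negations are defined componentwise as $\nein a = \nein_p a$ and $\no a = \no_p a$ for $a\in A_p$, so there is nothing to check there beyond citing the definition.

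The order restriction is the first point requiring care. For $a,b\in A_p$ I would compute, from the definition of $\le$, that $a\le b \iff a\cdot\nein b = 0_{p\lor p} = 0_p$. Since $a, \nein b\in A_p$ and multiplication restricted to $A_p$ is $\cdot_p$, this says $a\cdot_p \nein_p b = 0_p$. Now I would invoke the ipo\nbd-monoid axiom (ineg) inside $\m A_p$: because $\m A_p$ is integral with top $1_p$ and $0_p = \no_p 1_p$, the condition $a\cdot_p\nein_p b\le_p 0_p$ is equivalent to $a\le_p b$, and since $0_p$ is the bottom element of $\m A_p$ the inequality $a\cdot_p\nein_p b\le_p 0_p$ is the same as the equality $a\cdot_p\nein_p b = 0_p$. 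Chaining these equivalences yields $a\le b\iff a\le_p b$, as desired. The one subtlety I expect here is being explicit that $0_p$ really is the least element of $\m A_p$ so that the equation $a\cdot_p\nein_p b = 0_p$ coincides with the inequality appearing in (ineg); this follows from integrality, since $0_p = \no_p 1_p$ and $1_p$ is the top, so $0_p$ is the bottom by antitonicity of $\no_p$.

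For the final claim, that $a\le\varphi_{pq}(a) = 1_q\cdot a$ when $p\le^{\m D} q$ and $a\in A_p$, I would first identify $\varphi_{pq}(a)$ with $1_q\cdot a$. Here the natural route is: using the Płonka product, $1_q\cdot a$ lands in the component indexed by $q\lor p = q$ (as $p\le^{\m D} q$), and equals $\varphi_{qq}(1_q)\cdot_q\varphi_{pq}(a) = 1_q\cdot_q\varphi_{pq}(a) = \varphi_{pq}(a)$, since $1_q$ is the identity of $\m A_q$. Then to establish $a\le\varphi_{pq}(a)$ I would apply the definition of $\le$: setting $b = \varphi_{pq}(a)\in A_q$, I must show $a\cdot\nein b = 0_{p\lor q} = 0_q$. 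The main obstacle, and the only genuinely nontrivial step of the proposition, is this last verification: it requires understanding how the Płonka product interacts with $\varphi_{pq}$ and $0_q$, and showing the product $a\cdot\nein_q b$ collapses to $0_q$. I expect this to follow by pushing $a$ forward via $\varphi_{pq}$ (so that $a\cdot\nein b = \varphi_{pq}(a)\cdot_q\varphi_{qq}(\nein_q b) = b\cdot_q\nein_q b$ in $\m A_q$) and then using that in the integral ipo\nbd-monoid $\m A_q$ one has $b\cdot_q\nein_q b = 0_q$ by (ineg) applied to $b\le_q b$. This is where the homomorphism property of $\varphi_{pq}$ and the component-wise definition of the negations are doing the real work, and it is the step I would write out most carefully.
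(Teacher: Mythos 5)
Your proposal is correct and follows essentially the same route as the paper's proof: unwind the P\l{}onka product using $\varphi_{pp}=\mathrm{id}_{A_p}$ for the restrictions, and for the last claim compute $a\cdot\nein\varphi_{pq}(a)=\varphi_{pq}(a)\cdot_q\nein_q\varphi_{pq}(a)=0_q$ and $1_q\cdot a=\varphi_{qq}(1_q)\cdot_q\varphi_{pq}(a)=\varphi_{pq}(a)$. Your extra remark that $0_p$ is the bottom of $\m A_p$ (so the equality $a\cdot_p\nein_p b=0_p$ coincides with the inequality in (ineg)) is a detail the paper leaves implicit, but it is the right justification.
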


\begin{proof}
The fact that $\nein$ and $\no$ restricted to $A_p$ are $\nein_p$ and $\no_p$ is immediate, by the definitions. Now, if $a,b \in A_p$, then $p\lor p = p$, and by the definition of $\le$ we have that
${a \le b} \iff {a\cdot \nein b = 0_p}
\iff {\varphi_{pp}(a)\cdot_p\varphi_{pp}(\nein b) = 0_p}
\iff {a\cdot_p\nein b = 0_p}
\iff {a \le_p b}$,
since $\varphi_{pp}$ is the identity on $\m A_p$. For the same reason,
$a\cdot b = \varphi_{pp}(a)\cdot_p\varphi_{pp}(b) = a\cdot_p b$. Finally, if $p \le^{\m D} q$ and $a\in A_p$, then $a\cdot\nein\varphi_{pq}(a) = \varphi_{pq}(a)\cdot_q\nein\varphi_{pq}(a) = 0_q$. Hence $a \le \varphi_{pq}(a) = 1_q \cdot_q \varphi_{pq}(a) = \varphi_{qq}(1_q) \cdot_q \varphi_{pq}(a) = 1_q\cdot a$.
\end{proof}

\begin{remark}\label{rem:leG:reflexive}
An immediate consequence of this result is that $\le$ is a reflexive relation, since for every $p\in D$ and $a\in \m A_p$, we have that $a \le a$ if and only if $a \le_p a$, which we know is true. This result also implies that $\nein$ and $\no$ satisfy~(dn), since $\nein\no a = \nein_p\no_p a = a = \no_p\nein_p a = \no\nein a$. 
\end{remark}

The last proposition can be interpreted as saying that the glueing $\int_\Phi \m A_p$ is indeed an ``aggregate'' of the integral ipo\nbd-monoids $\m A_p$, although not necessarily an ipo\nbd-semigroup itself, since the relation $\le$ could fail to be antisymmetric and transitive. For instance, in the example of Figure~\ref{fig:counterexample:glueins:are:ipomonoids}, $0_r \le 0_p \le 1_p \le 1_q$, but $0_r \nle 1_q$. The natural question is then: given a semilattice directed system $(\m D, \cl A, \Phi)$ of integral ipo\nbd-monoids, what are the conditions that $\Phi$ must satisfy in order to ensure that $\int_\Phi \m A_p$ is an ipo\nbd-semigroup?

\begin{figure}
\centering\scriptsize
\begin{tikzpicture}[xscale=.8,yscale=.7]
\node at (-2,2)[n]{$\m D$};
\node at (9,3)[n]{$\m A_r$};
\node at (3,3)[n]{$\m A_p$};
\node at (6,3)[n]{$\m A_q$};
\node at (5,6.5)[n]{$A^+\cong D$};
\node at (5,-0.6)[n]{$A^\sim\cong D^\partial$};
\draw 
(9,6.5)--(3,5)(9,6.5)--(6,5)(9,-0.5)--(6,1)(9,-0.5)--(3,1)
(-1,3)node[label=right:$q$]{}--
(-2,4)node[label=above:$r$]{}--
(-3,3)node[label=left:$p$]{}
(9,-0.5)node[label=right:$0_r$]{}..controls(7.5,2)and(7.5,4)..
(9,6.5)node[label=right:$1_r$]{}..controls(10.5,4)and(10.5,2)..(9,-0.5)
(3,1)node[label=below:$0_p$]{}..controls(1.66,2.33)and(1.66,3.66)..(3,5)node[label=above:$1_p$]{}..controls(4.33,3.66)and(4.33,2.33)..(3,1)
(6,1)node[label=right:{\,\,\,$0_q$}]{}..controls(4.66,2.33)and(4.66,3.66)..(6,5)node[label=0:{\,\,\,\,\,$1_q$}]{}..controls(7.33,3.66)and(7.33,2.33)..(6,1);
\end{tikzpicture}
\qquad\qquad\qquad
\begin{tikzpicture}[xscale=.8,yscale=.7]
\node at (-4,3)[n]{$\m D$};
\node at (9,3)[n]{$\m A_r$};
\node at (3,3)[n]{$\m A_p$};
\node at (6,3)[n]{$\m A_q$};
\node at (.5,3)[n]{$\m A_1$};
\node at (5,6.5)[n]{$A^+\cong D$};
\node at (5,-0.6)[n]{$A^\sim\cong D^\partial$};
\draw 
(9,6.5)--(3,5)(9,6.5)--(6,5)(9,-0.5)--(6,1)(9,-0.5)--(3,1)(6,1)--(.5,2)--(3,1)(6,5)--(.5,4)--(3,5)
(-4,2)node[label=below:$1$]{}--
(-3,3)node[label=right:$q$]{}--
(-4,4)node[label=above:$r$]{}--
(-5,3)node[label=left:$p$]{}--(-4,2)
(9,-0.5)node[label=right:$0_r$]{}..controls(7.5,2)and(7.5,4)..
(9,6.5)node[label=right:$1_r$]{}..controls(10.5,4)and(10.5,2)..(9,-0.5)
(3,1)node[label=below:$0_p$]{}..controls(1.66,2.33)and(1.66,3.66)..(3,5)node[label=above:$1_p$]{}..controls(4.33,3.66)and(4.33,2.33)..(3,1)
(6,1)node[label=right:{\,\,\,$0_q$}]{}..controls(4.66,2.33)and(4.66,3.66)..(6,5)node[label=0:{\,\,\,\,\,$1_q$}]{}..controls(7.33,3.66)and(7.33,2.33)..(6,1)(.5,2)node[label=below:$0_1$]{}..controls(-.17,2.66)and(-.17,3.33)..(.5,4)node[label=above:$1_1$]{}..controls(1.16,3.33)and(1.16,2.66)..(.5,2);
\end{tikzpicture}
\caption{Structure of a locally integral ipo\nbd-semigroup and a locally integral ipo\nbd-monoid.}
\label{fig:structure:ipo-monoid}
\end{figure}
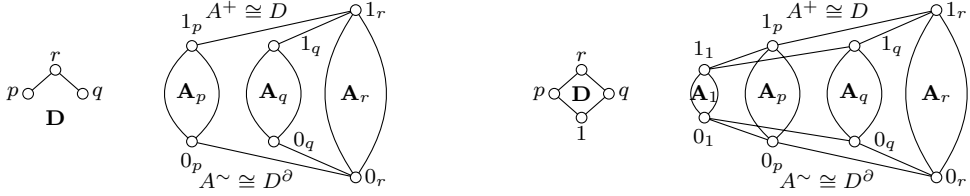

\begin{figure}
\centering\scriptsize
\begin{tikzpicture}[scale=2]
\node(p) at (-3.5,.5)[label=below:$p$]{};
\node(q) at (-4,1.5)[label=left:$q$]{};
\node(r) at (-3,1.5)[label=right:$r$]{};
\node(s) at (-3.5,2.5)[label=above:$s$]{};
\node(D)[n] at (-4.5,.5){\small$D$};

\draw (p)--(q)--(s)--(r)--(p);

\node(0p) at (0,0)[label=-30:$0_p$]{};
\node(0q) at (-1,1)[label=left:$0_q$]{};
\node(0r) at (1,1)[label=right:$0_r$]{};
\node(0s) at (0,2)[label=below:$0_s$]{};
\node(1p) at (0,1)[label=above:$1_p$]{};
\node(1q) at (-1,2)[label=35:$1_q$]{};
\node(1r) at (1,2)[label=145:$1_r$]{};
\node(1s) at (0,3)[label=above:$1_s$]{};
\node(P)[n] at (-1,0){\small$\Phi$};

\draw (0p)--(1p) (0q)--(1q) (0r)--(1r) (0s)--(1s);
\draw [-stealth, shorten >= 3pt, shorten <= 3pt] (1p) to [out=150,in=-30] (1q);
\draw [-stealth, shorten >= 3pt, shorten <= 3pt] (0p) to [out=150,in=-30] node[draw=none, fill=white, below]{\scriptsize$\varphi_{_{pq}}$} (1q);
\draw [-stealth, shorten >= 3pt, shorten <= 3pt] (1p) to [out=30,in=-150] (1r);
\draw [-stealth, shorten >= 3pt, shorten <= 3pt] (0p) to [out=30,in=-150] node[draw=none, fill=white, below]{\scriptsize$\varphi_{_{pr}}$} (1r);
\draw [-stealth, shorten >= 3pt, shorten <= 3pt] (1r) to [out=100,in=0] (1s);
\draw [-stealth, shorten >= 3pt, shorten <= 3pt] (0r) to [out=40,in=0] node[draw=none, fill=none, right]{\scriptsize\,\,$\varphi_{_{rs}}$} (1s);
\draw [-stealth, shorten >= 3pt, shorten <= 3pt] (1q) to [out=80,in=180] (1s);
\draw [-stealth, shorten >= 3pt, shorten <= 3pt] (0q) to [out=140,in=180] node[draw=none, fill=none, left]{\scriptsize$\varphi_{_{qs}}$\,} (1s);

\node(0p') at (3,1)[label=below:$0_p$]{};
\node(0q') at (4,0.5)[label=below:$0_q$]{};
\node(0r') at (5,0.5)[label=30:$0_r$]{};
\node(0s') at (6,0)[label=right:$0_s$]{};
\node(1p') at (3,2)[label=above:$1_p$]{};
\node(1q') at (4,2.5)[label=above:$1_q$]{};
\node(1r') at (5,2.5)[label=-30:$1_r$]{};
\node(1s') at (6,3)[label=right:$1_s$]{};

\node(G)[n] at (7,.5){\small$\int_\Phi \m A_p$};

\draw (0p')--(0q')--(0s')--(0r')--(0p')--(1p')--(1q')--(1s')--(1r')--(1p')
    (0q')--(1q') (0r')--(1r') (0s')--(1s');
\draw [dotted] (0r')--(1q') (0q')--(1r');

\node(nle)[n,fill=white] at (4.5,1.5){$\scriptstyle\nle$};
\end{tikzpicture}
\caption{A glueing of integral ipo\nbd-monoids that is not an ipo\nbd-monoid.}
\label{fig:counterexample:glueins:are:ipomonoids}
\end{figure}
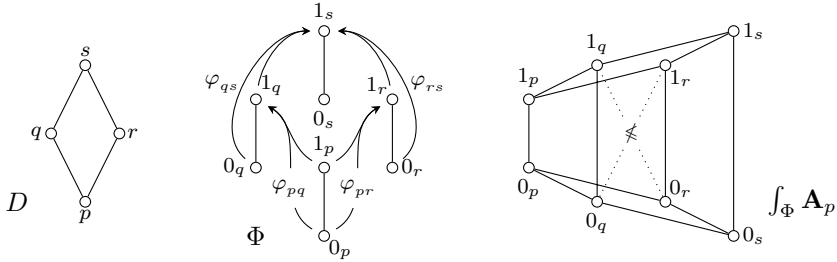

To answer this, our first step will be to examine the sets $G^+ = \{a \in \biguplus A_p : x \le a\cdot x, \text{for all } x\}$ and $G^\sim = \{\nein a : a \in G^+\}$ associated to any given glueing $\m G = \int_\Phi\m A_p$. As a corollary to Proposition~\ref{prop:glueing:is:an:aggregate}, we will show that the elements of $G^+$ are the elements of the form $1_p$, and the elements of $G^\sim$ are the ones of the form $0_p$, for some $p\in D$.

\begin{corollary}\label{cor:positive:and:zeros:in:G}
If $\m G=\int_\Phi \m A_p$ is the glueing of a semilattice directed system of integral ipo\nbd-monoids, then for all $p \in D$ and $a\in A_p$, we have that
\[
 a \in G^+ \iff a = 1_p \qquad\text{and}\qquad a \in G^\sim \iff a = 0_p.
\]
\end{corollary}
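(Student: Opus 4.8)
The plan is to prove the two equivalences side by side, deriving the $G^\sim$ statement from the $G^+$ statement, and to rely throughout on Proposition~\ref{prop:glueing:is:an:aggregate}, which supplies two facts: each of $\le$, $\cdot$, $\nein$, $\no$ restricts correctly to every $A_p$, and $a \le \varphi_{pq}(a) = 1_q\cdot a$ whenever $p\le^{\m D} q$.

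For the forward inclusion of $G^+$ I would first verify that every $1_p$ is positive in $\m G$. Given an arbitrary $x\in A_q$ and setting $r = p\lor q$, the P\l{}onka product unfolds as $1_p\cdot x = \varphi_{pr}(1_p)\cdot_r\varphi_{qr}(x)$; since each $\varphi_{pr}$ is a monoid homomorphism we have $\varphi_{pr}(1_p) = 1_r$, so the product collapses to $\varphi_{qr}(x)$. Because $q\le r$, Proposition~\ref{prop:glueing:is:an:aggregate} gives $x\le\varphi_{qr}(x) = 1_p\cdot x$, so $1_p\in G^+$. For the converse I take $a\in A_p\cap G^+$ and specialize the defining condition to $x = 1_p$: as $a$ and $1_p$ lie in the same component, their P\l{}onka product restricts to $a\cdot_p 1_p = a$, hence $1_p\le a$. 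Restricting this relation to $A_p$ by Proposition~\ref{prop:glueing:is:an:aggregate} yields $1_p\le_p a$, and together with $a\le_p 1_p$ (as $1_p$ is the top element of the integral monoid $\m A_p$) and antisymmetry of $\le_p$ we conclude $a = 1_p$.

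For $G^\sim$ I would use that $0_p = \nein 1_p$ in every integral component (Lemma~\ref{lem:local:identities}) and that $\nein$ restricts to $\nein_p$. The forward direction is then immediate: $1_p\in G^+$ gives $0_p = \nein 1_p\in G^\sim$. Conversely, if $a\in A_p$ and $a = \nein b$ for some $b\in G^+$, say $b\in A_q$, the $G^+$ characterization just proved forces $b = 1_q$, so $a = \nein 1_q = 0_q\in A_q$. Since $\biguplus\nolimits_D A_p$ is a disjoint union and $a$ already lies in $A_p$, this identifies $p = q$ and hence $a = 0_p$.

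The only point that needs care is that at this stage of the development the glueing relation $\le$ is not yet known to be a genuine partial order; it is merely reflexive, by Remark~\ref{rem:leG:reflexive}. The converse for $G^+$ must therefore invoke antisymmetry only inside each component $\m A_p$, never globally, which is legitimate precisely because of the restriction statement in Proposition~\ref{prop:glueing:is:an:aggregate}. The disjointness argument closing the $G^\sim$ case is the other spot to handle cleanly: one must first recognize $\nein 1_q$ as the element $0_q$ of the component $A_q$, and only then let disjointness of the union pin down $q = p$.
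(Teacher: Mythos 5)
Your proof is correct and follows essentially the same route as the paper: specializing the positivity condition to $x=1_p$ and using that $\le$ and $\cdot$ restrict to the integral component $\m A_p$ for one direction, unfolding the P\l{}onka product $1_p\cdot x=\varphi_{qr}(x)$ together with $x\le\varphi_{qr}(x)$ for the other, and deriving the $G^\sim$ statement by applying $\nein$. Your explicit cautions about using antisymmetry only inside a component and about disjointness of the union are sound and merely make explicit what the paper leaves implicit.
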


\begin{proof}
If $a \in G^+$ then, in particular, $1_p \le a \cdot 1_p = a \cdot_p 1_p = a \le 1_p$, by Proposition~\ref{prop:glueing:is:an:aggregate}, and therefore $a = 1_p$. Conversely, suppose that $p,q\in D$, $x \in A_q$, and $r=p\lor q$. Also by Proposition~\ref{prop:glueing:is:an:aggregate}, we have that $x\le 1_r\cdot x = \varphi_{pr}(1_p)\cdot_r\varphi_{qr}(x) = 1_p\cdot x$, and hence $1_p \in G^+$. For the second part, note that $a \in G^\sim \iff a = \nein 1_p = 0_p$ by the first part.
\end{proof}

Reflecting on Proposition~\ref{prop:A+:join:semilattice}, we would like to show that the relation $\le$ endows $G^+$ with the structure of a join-semilattice isomorphic to $\m D$, and $G^\sim$ with the structure of a meet-semilattice dually isomorphic to~$\m D$. In general, this will not be true. For this to hold, it will be necessary to assume an extra property of $\Phi$. We will prove first that this property is valid for the family of monoid homomorphisms associated to a locally integral ipo\nbd-semigroup.

\begin{lemma}\label{lem:A:liipomon:condition:on:zeros}
Let $\m A$ be a locally integral ipo\nbd-semigroup and $p\le q$ positive elements of $\m A$. Then, $\varphi_{pq}(0_p) = 0_q$ if and only if $p = q$.
\end{lemma}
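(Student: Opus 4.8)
The plan is to prove both implications, with the reverse direction being essentially trivial and the forward direction carrying the content. First I would dispose of the easy direction: if $p = q$, then $\varphi_{pp}$ is the identity homomorphism on $\m A_p$, so $\varphi_{pp}(0_p) = 0_p = 0_q$, which is immediate.

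For the forward direction, I would assume $\varphi_{pq}(0_p) = 0_q$ and aim to conclude $p = q$; since we already know $p \le q$ in the semilattice $\m A^+$, it suffices to show $q \le p$, which by Proposition~\ref{prop:A+:join:semilattice} is equivalent to $pq = p$, or since $p \le q$ gives $pq = q$, equivalent to showing $q = p$ directly. The key computation is to unfold the hypothesis using the definition $\varphi_{pq}(x) = qx$. So the assumption reads $q \cdot 0_p = 0_q$. I would then translate $0_p = p \cdot \nein p$ (recall $0_p = \no p \cdot p = p \cdot \nein p$ since $p = 1_p$ is positive and hence central by Proposition~\ref{prop:positive:are:central}, using Lemma~\ref{lem:char:local:bounds}) and exploit that positive elements are central and idempotent. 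Concretely, $q \cdot 0_p = q \cdot p \cdot \nein p = (qp) \cdot \nein p = q \cdot \nein p$, using $qp = q$ because $p \le q$.

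The heart of the argument is then to extract information from $q \cdot \nein p = 0_q$. I expect the main obstacle to be relating this product back to an equation purely among positive elements so that I can invoke the semilattice structure. The natural move is to apply an involutive negation: from $q \cdot \nein p = 0_q = \nein q$ (using $0_q = \nein 1_q = \nein q$ since $q$ is positive, by Lemma~\ref{lem:char:positives:and:zeros} and Lemma~\ref{lem:local:identities}), I would use residuation or contraposition to pass to the positive cone. For instance, $q \cdot \nein p = \nein q$ rewrites via rotation as $\nein\nein q \cdot q \le \cdots$, or more cleanly I would compute $\no(q \cdot \nein p) = \no\nein q = q$, and expand the left side as a residual: $\no(q \cdot \nein p) = \nein p / q$ by the definition of the right residual, giving $\nein p / q = q$. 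Alternatively, and perhaps more transparently, I would observe that $q \cdot \nein p = \nein p$ must hold (since $\nein q = 0_q$ and one can check $q \cdot \nein p \le \nein p$ as $\nein p \in A^\sim$ is bounded below by $0_q$ only when forced), then apply $\no$ to both sides of a suitably arranged identity to land on $pq = p$ in $A^+$, forcing $q \le p$ and hence $p = q$.

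A cleaner route, which I would pursue if the residual manipulation gets delicate, is to use the interval description: by Lemma~\ref{lem:char:Ax}, the hypothesis $\varphi_{pq}(0_p) = 0_q$ says the image of the bottom element $0_p$ of $\m A_p$ lands on the bottom element $0_q$ of $\m A_q$; since $\varphi_{pq}$ is a monoid homomorphism respecting joins and $0_q$ is the least element of $A_q$, I would argue that $\varphi_{pq}$ collapses the entire interval $[0_p, 1_p]$ only when the two components coincide. The decisive step is to combine $\varphi_{pq}(0_p) = 0_q$ with $\varphi_{pq}(1_p) = 1_q$ (established in the homomorphism lemma) and monotonicity to squeeze $0_q = 0_p$, whence $1_q = \nein 0_q = \nein 0_p = 1_p$ by Lemma~\ref{lem:local:identities}, and therefore $p = 1_p = 1_q = q$ by Lemma~\ref{lem:char:positives:and:zeros}. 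The main obstacle is justifying $\varphi_{pq}(0_p) = q \cdot 0_p = 0_p$, i.e.\ that $q$ acts trivially on $0_p$; I expect this to follow from $q \cdot 0_p \le 1_q \cdot 0_p$ combined with the centrality and idempotence of positive elements, pinning $q \cdot 0_p$ to lie in $A_{pq} = A_q$ and forcing the equality $0_q = 0_p$ that does all the work.
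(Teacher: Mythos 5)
The right-to-left direction is fine, and you have correctly located the target of the forward direction (forcing $0_p=0_q$, equivalently $q\le p$), but none of the routes you sketch actually closes the argument as written. In the residual route, the identity you invoke is wrong: by the definition $z/y=\no(y\cdot\nein z)$ one has $\no(q\cdot\nein p)=p/q$, not $\nein p/q$. (Had you written $p/q=q$, the proof would close: $q\le p/q$ gives $qq\le p$, hence $q\le p$ by idempotence of $q$, hence $p=q$; so this route is salvageable, but the step as stated does not follow.) The subsequent claim that $q\cdot\nein p\le\nein p$ ``must hold'' is false in general --- it is equivalent, by residuation, to $q\le\nein p/\nein p=1_{\nein p}=1_p=p$, which is precisely the conclusion you are trying to prove; positivity of $q$ gives the \emph{opposite} inequality $\nein p\le q\cdot\nein p$. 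Likewise, in your ``cleaner route'' the step you flag as the main obstacle, namely justifying $q\cdot 0_p=0_p$, is not an obstacle to be overcome but the conclusion in disguise: if $p\neq q$ then $q\cdot 0_p\in A_q$ while $0_p\in A_p$, and these classes are disjoint. The inequality $q\cdot 0_p\le 1_q\cdot 0_p$ that you offer is vacuous since $q=1_q$.

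The missing ingredient is the pair of one-line inequalities that the paper uses: since $q$ is positive, $0_p\le q\cdot 0_p=\varphi_{pq}(0_p)$, and since $p\le q$ and $\nein$ is antitone (with $0_p=\nein p$, $0_q=\nein q$), $0_q\le 0_p$. The paper runs this contrapositively: if $p<q$ then $0_q<0_p\le q\cdot 0_p$, so $\varphi_{pq}(0_p)\neq 0_q$. Equivalently, in your direct formulation, the hypothesis $q\cdot 0_p=0_q$ squeezes $0_p\le 0_q\le 0_p$, whence $0_p=0_q$ and $p=\no 0_p=\no 0_q=q$, exactly the finish you describe via Lemmas~\ref{lem:local:identities} and~\ref{lem:char:positives:and:zeros}. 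So the gap is concrete but small: you never invoke positivity of $q$ in the form $0_p\le q\cdot 0_p$ nor antitonicity in the form $0_q\le 0_p$, and every substitute you propose for these two facts is either incorrect or circular.
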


\begin{proof}
The implication from right to left is obvious, since $p = q$ implies that $\varphi_{pq}$ is the identity map. As for the other implication, just notice that $p < q$ implies that $0_q < 0_p \le q\cdot 0_p = \varphi_{pq}(0_p)$, and therefore $\varphi_{pq}(0_p) \neq 0_q$.
\end{proof}

This suggests the following condition for $\Phi$, which we call \emph{zero avoidance}:
\begin{equation}\label{eq:condition:on:zeros}
\text{for all } p \le^{\m D} q,\quad \varphi_{pq}(0_p) = 0_q \iff p = q. \tag{za}    
\end{equation}

\begin{lemma}\label{lem:za:implies:iso:D:positives}
If $\int_\Phi \m A_p$ is the glueing of a semilattice directed system of integral ipo\nbd-monoids $(\m D, \cl A, \Phi)$ and $\Phi$ satisfies~\eqref{eq:condition:on:zeros}, then for all $p$ and $q$ in $D$,
$1_p \le 1_q \iff p \le^{\m D} q$ and
$0_p \le 0_q \iff q \le^{\m D} p$.
\end{lemma}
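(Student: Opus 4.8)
The plan is to prove the two biconditionals by exploiting the monoid structure of the $\m A_p$, the definition of $\le$ in the glueing, and the zero-avoidance condition~\eqref{eq:condition:on:zeros}. I would handle $1_p \le 1_q \iff p \le^{\m D} q$ first, and then derive the statement about zeros by a De~Morgan-style dualization, or by an entirely parallel argument using the negations.

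For the forward direction of the first biconditional, suppose $1_p \le 1_q$. Writing $r = p\lor q$, the definition of $\le$ in the glueing gives $1_p \cdot \nein 1_q = 0_r$. Now $\nein 1_q = 0_q$ and, computing the P\l{}onka product, $1_p \cdot 0_q = \varphi_{pr}(1_p)\cdot_r \varphi_{qr}(0_q) = 1_r \cdot_r \varphi_{qr}(0_q) = \varphi_{qr}(0_q)$, since $\varphi_{pr}(1_p) = 1_r$ (homomorphisms preserve the unit) and $1_r$ is the identity of $\m A_r$. Thus the hypothesis becomes $\varphi_{qr}(0_q) = 0_r$. Since $q \le^{\m D} r$, zero avoidance forces $q = r = p \lor q$, i.e. $p \le^{\m D} q$. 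The converse is immediate from Proposition~\ref{prop:glueing:is:an:aggregate}: if $p \le^{\m D} q$ then $1_p \le \varphi_{pq}(1_p) = 1_q$.

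For the second biconditional, I expect the cleanest route is contraposition through the negations. By definition $0_p = \nein 1_p$ and $0_q = \nein 1_q$, and I would show that in the glueing $\nein$ is order-reversing, so that $0_p \le 0_q$ is equivalent to $1_q \le 1_p$; applying the first biconditional then yields $q \le^{\m D} p$. The antitonicity of $\nein$ on these special elements can be verified directly: using the contraposition laws and the definition of $\le$, one checks that $a \le b \iff \no b \cdot a = 0_{p\lor q}$ is symmetric enough to give $\nein b \le \nein a$ when $a = 1_p$, $b = 1_q$. Alternatively, I would run the completely parallel computation: assuming $0_p \le 0_q$ with $r = p\lor q$, compute $0_p \cdot \nein 0_q = 0_p \cdot 1_q = \varphi_{pr}(0_p)\cdot_r \varphi_{qr}(1_q) = \varphi_{pr}(0_p)$, so the relation becomes $\varphi_{pr}(0_p) = 0_r$, and zero avoidance (now applied to $p \le^{\m D} r$) gives $p = r$, i.e. $q \le^{\m D} p$.

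The main obstacle, and the step deserving the most care, is the product computation in the P\l{}onka sum — specifically confirming that $\varphi_{pr}(1_p) = 1_r$ and that the mixed products collapse correctly against the identity of $\m A_r$. Everything hinges on the homomorphisms preserving units and on reading off the index $r = p\lor q$ at which the product is evaluated; once these are nailed down, zero avoidance does the real work of turning an equation about zeros into the order comparison in $\m D$. I would also take care that for the zeros direction I apply~\eqref{eq:condition:on:zeros} to the correct pair (the $0_p$ term is the one transported, so it is $p \le^{\m D} r$ that is relevant), which is where the asymmetry producing $q \le^{\m D} p$ rather than $p \le^{\m D} q$ enters.
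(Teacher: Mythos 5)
Your proposal is correct and follows essentially the same route as the paper: the first biconditional is established by exactly the same P\l{}onka-product computation reducing $1_p\le 1_q$ to $\varphi_{qr}(0_q)=0_r$ and invoking zero avoidance, and your ``parallel computation'' for the zeros (reducing $0_p\le 0_q$ to $\varphi_{pr}(0_p)=0_r$) is the fully rigorous version of the paper's shorter dualization via $\nein$. No gaps.
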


\begin{proof}
For all $p,q\in D$, with $r = p\lor q$, we have the following equivalences:
\begin{align*}
1_p \le 1_q &\iff \varphi_{pr}(1_p)\cdot_r\varphi_{qr}(0_q) = 0_r 
    \iff 1_r\cdot_r\varphi_{qr}(0_q) = 0_r
    \iff \varphi_{qr}(0_q) = 0_r \\
    &\iff q = r = p\lor q
    \iff p \le^{\m D} q.
\end{align*}
As for the second equivalence, $0_p \le 0_q \iff \nein 1_p \le \nein 1_q \iff 1_q\le 1_p \iff q\le^\m D p$.
\end{proof}

Next, recall that by virtue of Lemma~\ref{lem:A:is:balanced}, in any locally integral ipo\nbd-semigroup $\m A$, for all positive elements $p$ and $q$ and $x\in A_p$ and $y\in A_q$, we have that $x\le y \iff x\cdot\nein y = 0_{pq} \iff \no y\cdot x = 0_{pq}$. Therefore, for a glueing $\int_\Phi \m A_p$ to be a locally integral ipo\nbd-semigroup, the family $\Phi$ has to satisfy the corresponding condition:
\begin{equation}\label{eq:cond:equivalent:to:Phi:satisfies:bal}
    \text{for all } p,q\in D, a\in A_p, b\in A_q,\quad
    a\cdot\nein b = 0_{p\lor q} \iff \no b\cdot a = 0_{p\lor q}.\tag{*}
\end{equation}

As we will see in the next lemma, this is equivalent to the requirement that all the morphisms in $\Phi$ satisfy the following \emph{balance} property:
\begin{equation}\label{eq:Phi:satisfies:bal}
\text{for every } a\in A_p,\quad 
\nein\varphi_{pq}(\no a) = \no\varphi_{pq}(\nein a).\tag{bal}
\end{equation}

\begin{lemma}
If $\int_\Phi \m A_p$ is the glueing of a semilattice directed system of integral ipo\nbd-monoids $(\m D, \cl A, \Phi)$, then $\int_\Phi\m A_p$ satisfies~\eqref{eq:cond:equivalent:to:Phi:satisfies:bal} if and only if every morphism in $\Phi$ satisfies~\eqref{eq:Phi:satisfies:bal}.
\end{lemma}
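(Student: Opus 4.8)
The plan is to unfold both conditions inside the component $\m A_r$, where $r = p\lor q$, and reduce them to the integrality condition (ineg) holding in that component. First I would record the elementary fact that in each integral component $\m A_r$ the global identity $1_r$ is the top element, so that $0_r = \nein_r 1_r = \no_r 1_r$ is the bottom element (by antitonicity of the negations and surjectivity of $\nein_r$); consequently, for $c\in A_r$, the equality $c = 0_r$ is equivalent to the inequality $c\le_r 0_r$.

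Next I would rewrite the two products occurring in~\eqref{eq:cond:equivalent:to:Phi:satisfies:bal} using the definitions of the P\l{}onka sum and of the negations of the glueing: for $a\in A_p$ and $b\in A_q$ with $r = p\lor q$ we have $a\cdot\nein b = \varphi_{pr}(a)\cdot_r\varphi_{qr}(\nein_q b)$ and $\no b\cdot a = \varphi_{qr}(\no_q b)\cdot_r\varphi_{pr}(a)$. Combining the reduction to $\le_r 0_r$ with (ineg) in $\m A_r$---reading $\varphi_{qr}(\nein_q b)$ as $\nein_r\bigl(\no_r\varphi_{qr}(\nein_q b)\bigr)$ and $\varphi_{qr}(\no_q b)$ as $\no_r\bigl(\nein_r\varphi_{qr}(\no_q b)\bigr)$ via double negation---I would transform~\eqref{eq:cond:equivalent:to:Phi:satisfies:bal} into the equivalent assertion that, for all such $p,q,a,b$,
\[
\varphi_{pr}(a) \le_r \no_r\varphi_{qr}(\nein_q b) \iff \varphi_{pr}(a)\le_r\nein_r\varphi_{qr}(\no_q b).
\]

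With this normal form in hand both implications become short. For the direction \eqref{eq:Phi:satisfies:bal}$\Rightarrow$\eqref{eq:cond:equivalent:to:Phi:satisfies:bal}, the balance property applied to the morphism $\varphi_{qr}$ states precisely that $\no_r\varphi_{qr}(\nein_q b) = \nein_r\varphi_{qr}(\no_q b)$, so the two sides of the displayed biconditional are literally the same inequality. For the converse, I would fix $q\le^{\m D} r$ and $b\in A_q$ and specialize the normal form to $p = r$: since $q\le r$ gives $p\lor q = r$ and $\varphi_{rr}$ is the identity on $\m A_r$, the equivalence now reads, for every $a\in A_r$,
\[
a \le_r \no_r\varphi_{qr}(\nein_q b) \iff a \le_r \nein_r\varphi_{qr}(\no_q b),
\]
now quantified over the whole of $A_r$. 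Taking $a$ to be each of the two right-hand elements in turn and invoking reflexivity and antisymmetry of $\le_r$ yields their equality, which is exactly~\eqref{eq:Phi:satisfies:bal} for $\varphi_{qr}$; the identity morphisms satisfy~\eqref{eq:Phi:satisfies:bal} trivially by double negation, so all morphisms in $\Phi$ are covered.

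The only delicate point is the reduction step: one must apply (ineg) to the correct factor of each product and keep careful track of which negation undoes $\varphi_{qr}(\nein_q b)$ versus $\varphi_{qr}(\no_q b)$. The idea that makes the converse go through is the specialization $p = r$, which collapses $\varphi_{pr}$ to the identity and thereby lets the auxiliary element $a$ range over the entire component $A_r$---exactly the quantification antisymmetry needs in order to upgrade a biconditional between inequalities to an honest equality.
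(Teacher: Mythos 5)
Your proposal is correct and follows essentially the same route as the paper: both directions reduce $c = 0_r$ to $c \le_r 0_r$ (since $0_r$ is the bottom of the integral component), unfold the P\l{}onka products, apply (ineg) in $\m A_r$ to obtain the biconditional $\varphi_{pr}(a) \le_r \no\varphi_{qr}(\nein b) \iff \varphi_{pr}(a)\le_r\nein\varphi_{qr}(\no b)$, and for the converse specialize the first index to the larger component so that one $\varphi$ becomes the identity and the auxiliary element ranges over all of $A_r$, upgrading the biconditional to an equality by antisymmetry. The only difference is presentational: you isolate the normal form once, while the paper writes out the two chains of equivalences directly.
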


\begin{proof}
Suppose that $\Phi$ satisfies~\eqref{eq:Phi:satisfies:bal}, and let $p,q\in D$, $a\in A_p$ and $b\in A_q$, and $r = p\lor q$.
\begin{align*}
a\cdot\nein b = 0_r &\iff \varphi_{pr}(a)\cdot_r\varphi_{qr}(\nein b) = 0_r
\iff \varphi_{pr}(a)\le_r\no\varphi_{qr}(\nein b)\\
&\iff \varphi_{pr}(a)\le_r\nein\varphi_{qr}(\no b)
\iff \varphi_{qr}(\no b)\cdot_r\varphi_{pr}(a) = 0_r
\iff \no b\cdot a =0_r.
\end{align*}
For the other direction, suppose that $\int_\Phi\m A_p$ satisfies~\eqref{eq:cond:equivalent:to:Phi:satisfies:bal} and let $p\le q$ in $D$ and $a\in A_p$. The property~\eqref{eq:Phi:satisfies:bal} follows from the fact that for every $x\in A_q$, we have that
\begin{align*}
x\le_q\no\varphi_{pq}(\nein a) &\iff x\cdot_q\varphi_{pq}(\nein a) = 0_q
\iff \varphi_{qq}(x)\cdot_q\varphi_{pq}(\nein a) = 0_q\\
&\iff x\cdot\nein a = 0_q
\iff \no a\cdot x = 0_q
\iff \varphi_{pq}(\no a)\cdot_q\varphi_{qq}(x) = 0_q\\
&\iff \varphi_{pq}(\no a)\cdot_q x = 0_q
\iff x\le_q \nein\varphi_{pq}(\no a)\tag*{\QED}
\end{align*}\def\QED{}
\end{proof}

We say that $\Phi$ is \emph{balanced} if all the morphisms in $\Phi$ are balanced. One can readily check that the commutativity of $\int_\Phi \m A_p$ implies that $\Phi$ is balanced. We will prove next that when $\Phi$ is balanced, the operations $\nein$ and $\no$ are involutive with respect to the relation~$\le$ and the glueing satisfies rotation.

\begin{lemma}\label{lem:glueing:involution}
If $\int_\Phi \m A_p$ is the glueing of a semilattice directed system of integral ipo\nbd-monoids $(\m D, \cl A, \Phi)$ such that $\Phi$ is balanced, then for all $p,q\in D$, $a\in A_p$, and $b\in A_q$,
\[
a \le b \iff \no b \le \no a \iff \nein b \le \nein a.
\]
Moreover, for all $p,q,r\in D$, $a\in A_p$, $b\in A_q$, and $c\in A_r$,
\[
a\cdot b\le c \iff b\cdot\nein c \le \nein a \iff \no c\cdot a\le \no b.
\]
\end{lemma}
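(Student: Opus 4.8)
The plan is to exploit the fact that, because $\Phi$ is balanced, the preceding lemma guarantees that $\int_\Phi\m A_p$ satisfies~\eqref{eq:cond:equivalent:to:Phi:satisfies:bal}. Combined with the defining clause $a\le b\iff a\cdot\nein b=0_{p\lor q}$ of the glueing, this furnishes \emph{two} interchangeable descriptions of the relation:
\[
a\le b \iff a\cdot\nein b = 0_{p\lor q} \iff \no b\cdot a = 0_{p\lor q},\qquad a\in A_p,\ b\in A_q.
\]
I shall also use freely that $\nein$ and $\no$ satisfy double negation on $\int_\Phi\m A_p$ (Remark~\ref{rem:leG:reflexive}) and that, by Proposition~\ref{prop:glueing:is:an:aggregate}, on each $A_p$ the glueing operations restrict to those of $\m A_p$.

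For the first chain of equivalences I would argue purely from the two forms above and double negation, the index bookkeeping being trivial since $p\lor q=q\lor p$. Reading $a\le b$ in the second form and inserting $a=\nein\no a$ gives $\no b\cdot a=\no b\cdot\nein(\no a)$, which is the first form of $\no b\le\no a$; hence $a\le b\iff\no b\le\no a$. Symmetrically, reading $a\le b$ in the first form and writing $a=\no\nein a$ turns $a\cdot\nein b$ into $\no(\nein a)\cdot\nein b$, the second form of $\nein b\le\nein a$; hence $a\le b\iff\nein b\le\nein a$. This settles the antitonicity of the negations.

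For rotation I would push everything into the single component $\m A_t$ with $t=p\lor q\lor r$. By the P\l{}onka-sum formula and compatibility of $\Phi$, writing $\alpha=\varphi_{pt}(a)$ and $\beta=\varphi_{qt}(b)$, every product of $a$, $b$ and a $c$\nbd-term lands in $A_t$ with the expected image, e.g.\ $(a\cdot b)\cdot\nein c=\alpha\cdot_t\beta\cdot_t\varphi_{rt}(\nein c)$. Using the second form of $\le$ together with $\no\nein a=a$, the inequality $b\cdot\nein c\le\nein a$ unwinds to $a\cdot b\cdot\nein c=0_t$, which by associativity and the first form is exactly $a\cdot b\le c$; this gives $a\cdot b\le c\iff b\cdot\nein c\le\nein a$ with no appeal to balance. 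The remaining equivalence is where the balance hypothesis does the real work: I would set $\gamma:=\no_t\varphi_{rt}(\nein c)$ and observe that~\eqref{eq:Phi:satisfies:bal} for $\varphi_{rt}$ says precisely $\gamma=\nein_t\varphi_{rt}(\no c)$, whence double negation in $\m A_t$ yields $\nein_t\gamma=\varphi_{rt}(\nein c)$ and $\no_t\gamma=\varphi_{rt}(\no c)$. Since $\m A_t$ is an integral ipo\nbd-monoid, $0_t$ is its bottom and its order satisfies $x\le_t y\iff x\cdot_t\nein_t y=0_t\iff\no_t y\cdot_t x=0_t$; thus both $a\cdot b\le c$ (which reads $\alpha\cdot_t\beta\cdot_t\nein_t\gamma=0_t$) and $\no c\cdot a\le\no b$ (which, via $\nein\no b=b$ and the P\l{}onka formula, reads $\no_t\gamma\cdot_t\alpha\cdot_t\beta=0_t$) become the single inequality $\alpha\cdot_t\beta\le_t\gamma$ inside $\m A_t$.

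I expect the only genuine obstacle to be this last equivalence: the homomorphisms in $\Phi$ need not commute with the negations, so $\varphi_{rt}(\nein c)$ and $\varphi_{rt}(\no c)$ are a priori unrelated to $\nein_t\varphi_{rt}(c)$ and $\no_t\varphi_{rt}(c)$, and one cannot simply invoke the rotation of $\m A_t$ on $\varphi_{rt}(c)$. Balance is exactly the statement that $\varphi_{rt}(\nein c)$ and $\varphi_{rt}(\no c)$ are the two involutive negations of one and the same element $\gamma\in A_t$, which is what licenses the use of rotation within the integral component; everything else reduces to associativity of the P\l{}onka sum, the two forms of $\le$, and double negation.
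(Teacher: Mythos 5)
Your proof is correct and follows essentially the same route as the paper's: both arguments rest on the fact that balance, via the preceding lemma, makes the two descriptions $a\cdot\nein b=0_{p\lor q}$ and $\no b\cdot a=0_{p\lor q}$ of $\le$ interchangeable, after which everything reduces to double negation and associativity (your reduction of the last equivalence to $\alpha\cdot_t\beta\le_t\gamma$ inside $\m A_t$ is just a repackaging of the paper's direct application of condition~(*) to $a\cdot b$ and $c$). One small caveat: your parenthetical claim that the first rotation equivalence needs ``no appeal to balance'' is misleading, since reading $b\cdot\nein c\le\nein a$ in the second form is itself an application of~(*), hence of balance.
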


\begin{proof}
The first equivalence can be proven as follows:
$a \le b \iff a \cdot \nein b = 0_{p\lor q}
    \iff {\no b\cdot a = 0_{p\lor q}}
    \iff \no b\cdot \nein\no a = 0_{p\lor q}
    \iff \no b \le \no a$.
As for the second one,
$\nein b \le \nein a \iff a = \no\nein a \le \no\nein b = b$.

Regarding rotation, suppose that $a \in A_p$, $b \in A_q$, $c \in A_r$ and $u=p\lor q\lor r$. Then ${a\cdot b \le c} \iff {(a\cdot b)\cdot \nein c = 0_u} \iff \no\nein a \cdot (b\cdot \nein c) = 0_u \iff (b \cdot \nein c)\cdot \nein\nein a = 0_u$ by~(bal), that is, {$b\cdot \nein c \le \nein a$}. For the second equivalence, $a\cdot b \le c \iff {(a \cdot b) \cdot \nein c = 0_u} \iff \no {c \cdot (a\cdot b) = 0_u} \iff {(\no c \cdot a) \cdot b = 0_u} \iff (\no c \cdot a) \cdot \nein\no b = 0_u \iff \no c \cdot a \le \no b$. 
\end{proof}

The semilattice directed system of Figure~\ref{fig:counterexample:glueins:are:ipomonoids} avoids zeros and is balanced\,---since its components are commutative, and therefore cyclic---, but the glueing is not an ipo\nbd-semigroup because the relation $\le$ is not transitive. Hence, we also need to impose the following condition on the family $\Phi$ for $\int_\Phi \m A_p$ to be an ipo\nbd-semigroup:
\begin{equation}\label{eq:transitivity:leG}
\text{for all } a,b,c\in \textstyle\biguplus A_p,\quad \text{if } a \le b \text{ and } b \le c, \text{ then } a \le c. \tag{tr}    
\end{equation}
Our next result characterizes the condition~\eqref{eq:transitivity:leG} in simpler terms.

\begin{lemma}\label{lem:char:transitivity:leG}
If $\int_\Phi \m A_p$ is the glueing of a semilattice directed system of integral ipo\nbd-monoids $(\m D, \cl A, \Phi)$ and $\Phi$ satisfies~\eqref{eq:condition:on:zeros} and~\eqref{eq:Phi:satisfies:bal}, then $\Phi$ satisfies~\eqref{eq:transitivity:leG} if and only if it satisfies:
\begin{enumerate}
    \item for all $p \le^{\m D} q$, and $a,b\in A_p$, 
    $a \le_p b \implies \varphi_{pq}(a) \le_q \varphi_{pq}(b)$;\hfill \textnormal{(mon)}
    
    \item for all $p \le^{\m D} q$, $p \le^{\m D} r$, and $a\in A_p$,
    $\nein \varphi_{pq}(a) \le \varphi_{pr}(\nein a)$;\hfill \textnormal{(lax)}
\end{enumerate}
\end{lemma}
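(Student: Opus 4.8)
The plan is to prove both implications, working throughout under the standing hypotheses \eqref{eq:condition:on:zeros} and \eqref{eq:Phi:satisfies:bal}. These give us two tools for free: by Proposition~\ref{prop:glueing:is:an:aggregate}, the restriction of each of $\le,\cdot,\nein,\no$ to a single $A_p$ is the corresponding operation of $\m A_p$, and $a\le\varphi_{pq}(a)$ whenever $p\le^{\m D}q$ and $a\in A_p$; and by Lemma~\ref{lem:glueing:involution} the negations are antitone for $\le$ and rotation holds.

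\textbf{Forward direction} (from (tr) to (mon) and (lax)). For (mon), fix $p\le^{\m D}q$ and $a,b\in A_p$ with $a\le_p b$. By Proposition~\ref{prop:glueing:is:an:aggregate} this means $a\le b$ in the glueing, and also $b\le\varphi_{pq}(b)$; transitivity yields $a\le\varphi_{pq}(b)$. Since $a\in A_p$, $\varphi_{pq}(b)\in A_q$ and $p\lor q=q$, unfolding the definition of $\le$ and using (ineg) in $\m A_q$ turns this into $\varphi_{pq}(a)\cdot_q\nein_q\varphi_{pq}(b)=0_q$, that is, $\varphi_{pq}(a)\le_q\varphi_{pq}(b)$, which is (mon). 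For (lax), fix $p\le^{\m D}q$, $p\le^{\m D}r$ and $a\in A_p$. From $a\le\varphi_{pq}(a)$ and the antitonicity of $\nein$ (Lemma~\ref{lem:glueing:involution}) we get $\nein\varphi_{pq}(a)\le\nein a$; applying Proposition~\ref{prop:glueing:is:an:aggregate} to $\nein a\in A_p$ gives $\nein a\le\varphi_{pr}(\nein a)$. Transitivity now delivers $\nein\varphi_{pq}(a)\le\varphi_{pr}(\nein a)$, which is exactly (lax).

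\textbf{Backward direction} (from (mon) and (lax) to (tr)). The first step is a \emph{lifting lemma}: if $a\le b$ with $a\in A_p$, $b\in A_q$, then $\varphi_{ps}(a)\le_s\varphi_{qs}(b)$ for every $s\ge^{\m D}p\lor q$. At $t=p\lor q$, unfold $a\le b$ as $\varphi_{pt}(a)\cdot_t\varphi_{qt}(\nein b)=0_t$ and read it through (ineg) in $\m A_t$ as $\varphi_{pt}(a)\le_t\no_t\varphi_{qt}(\nein b)$; applying (lax) with source $q$ and both targets equal to $t$ gives $\nein_t\varphi_{qt}(b)\le_t\varphi_{qt}(\nein b)$, whence $\no_t\varphi_{qt}(\nein b)\le_t\varphi_{qt}(b)$ and so $\varphi_{pt}(a)\le_t\varphi_{qt}(b)$; finally (mon) pushes this up to any $s\ge^{\m D}t$. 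Applying the lifting lemma to $a\le b$ and $b\le c$ and taking the common join $u=p\lor q\lor r$ produces $\varphi_{pu}(a)\le_u\varphi_{ru}(c)$ by transitivity inside the poset $\m A_u$.

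The hard part is to convert this back into $a\le c$, i.e. into $\varphi_{p(p\lor r)}(a)\cdot_{p\lor r}\varphi_{r(p\lor r)}(\nein c)=0_{p\lor r}$, and this is the step I expect to be the main obstacle. Two gaps must be bridged. First, the lifted comparison lives at level $u$, whereas the defining condition for $a\le c$ lives at the smaller level $p\lor r$, and the homomorphisms only move information upward; this is where \eqref{eq:condition:on:zeros} is essential, since it pins down \emph{exactly} when a homomorphism carries a zero $0_x$ to a zero $0_y$ (namely only when $x=y$), and hence lets one recognise the bottom at the correct level rather than at $u$. Second, the maps need not preserve the negations, so the clean $\le_u$-comparison is genuinely \emph{weaker} than the product-equals-$0$ condition defining $a\le c$, the discrepancy being measured precisely by (lax). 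I therefore expect to route the argument through the clean comparable-component identity $p\le^{\m D}q\implies\big(a\le b\iff\varphi_{pq}(a)\le_q b\big)$, which follows at once from the definition, (ineg), and $\varphi_{qq}=\mathrm{id}$, organising the proof according to the order relations among $p,q,r$ in $\m D$ and using Corollary~\ref{cor:positive:and:zeros:in:G} and Lemma~\ref{lem:za:implies:iso:D:positives} to keep track of the positives and the zeros. This reconciliation of levels, together with the $\varphi\circ\nein$ versus $\nein\circ\varphi$ mismatch, is the crux of the backward implication.
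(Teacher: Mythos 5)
Your forward direction is correct and is essentially the paper's argument: (mon) from $a\le b\le\varphi_{pq}(b)$ plus transitivity, and (lax) from $\nein\varphi_{pq}(a)\le\nein a\le\varphi_{pr}(\nein a)$ plus transitivity. The backward direction, however, has a genuine gap, and you have in fact flagged it yourself: after obtaining $\varphi_{pu}(a)\le_u\varphi_{ru}(c)$ at the level $u=p\lor q\lor r$, you never convert this back into $a\le c$, i.e.\ into $\varphi_{pw}(a)\cdot_w\varphi_{rw}(\nein c)=0_w$ at the level $w=p\lor r$; you only describe the obstacles. Worse, your lifting lemma has already discarded the information needed to do so. When you pass from $\varphi_{qt}(b)\le_t\no_t\varphi_{rt}(\nein c)$ to $\varphi_{qt}(b)\le_t\varphi_{rt}(c)$, you use the (lax) inequality $\no\varphi_{rt}(\nein c)\le\varphi_{rt}(c)$ in the direction that \emph{weakens} the conclusion: from the resulting $\varphi_{pu}(a)\cdot_u\nein_u\varphi_{ru}(c)=0_u$ you cannot recover $\varphi_{pu}(a)\cdot_u\varphi_{ru}(\nein c)=0_u$, because (lax) gives $\nein\varphi_{ru}(c)\le\varphi_{ru}(\nein c)$, which only yields the useless lower bound $0_u\le\varphi_{pu}(a)\cdot_u\varphi_{ru}(\nein c)$.

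The paper's proof avoids this by applying (lax) \emph{only to the middle term} $b$, keeping $\varphi(a)$ and $\varphi(\nein c)$ intact. Concretely, with $s=p\lor q$, $t=q\lor r$, $v=s\lor t$: the hypotheses give $\varphi_{ps}(a)\le_s\no\varphi_{qs}(\nein b)$ and $\varphi_{rt}(\nein c)\le_t\nein\varphi_{qt}(b)$; (mon) lifts both to level $v$; (lax) applied to $b$ (sources $q$, targets $t$ and $s$) plus antitonicity gives $\no\varphi_{qs}(\nein b)\le\varphi_{qt}(b)$, whose very definition as a $\le$-statement in the glueing says that $\varphi_{sv}(\no\varphi_{qs}(\nein b))\cdot_v\varphi_{tv}(\nein\varphi_{qt}(b))=0_v$; monotonicity of $\cdot_v$ then yields $\varphi_{pv}(a)\cdot_v\varphi_{rv}(\nein c)=0_v$. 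Finally, the descent from $v$ to $w=p\lor r$ is exactly where \eqref{eq:condition:on:zeros} enters: since $\varphi_{wv}$ is a monoid homomorphism, $\varphi_{wv}\bigl(\varphi_{pw}(a)\cdot_w\varphi_{rw}(\nein c)\bigr)=0_v$, and as $0_w$ is the bottom of $\m A_w$ and $\varphi_{wv}$ is monotone by (mon), this forces $\varphi_{wv}(0_w)=0_v$, hence $w=v$ by \eqref{eq:condition:on:zeros} and therefore $a\le c$. Your proposal correctly identifies that zero avoidance and the $\varphi\circ\nein$ versus $\nein\circ\varphi$ mismatch are the crux, but identifying the crux is not the same as resolving it; as written, the backward implication is not proved.
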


\begin{shortproof}
Suppose that $\Phi$ satisfies both~\eqref{eq:Phi:satisfies:bal} and~\eqref{eq:transitivity:leG}.
\begin{enumerate}
    \item By Proposition~\ref{prop:glueing:is:an:aggregate},  $a \le b \le \varphi_{pq}(b)$, and by~\eqref{eq:transitivity:leG}, we obtain that $a \le \varphi_{pq}(b)$. Hence, $\varphi_{pq}(a)\cdot_q \nein \varphi_{pq}(b) = a \cdot \nein \varphi_{pq}(b) = 0_q$, and therefore $\varphi_{pq}(a) \le_q \varphi_{pq}(b)$.

    \item By Proposition~\ref{prop:glueing:is:an:aggregate}, we have that $\nein a \le \varphi_{pr}(\nein a)$ and $a \le \varphi_{pq}(a)$, and then by Lemma~\ref{lem:glueing:involution}, $\nein \varphi_{pq}(a) \le \nein a$. We deduce by~\eqref{eq:transitivity:leG} that $\nein\varphi_{pq}(a) \le \varphi_{pr}(\nein a)$.
\end{enumerate}

In order to prove the reverse implication, suppose that $\Phi$ satisfies \eqref{eq:condition:on:zeros}, \eqref{eq:Phi:satisfies:bal}, (mon), and (lax), and $p,q,r\in D$, with $s = p\lor q$, $t = q\lor r$, $u = p\lor r$, $a\in A_p$, $b\in A_q$, and $c\in A_r$ are such that $a\le b$ and $b \le c$. Then, by definition of~$\le$, we have that
$\varphi_{ps}(a)\cdot_s\varphi_{qs}(\nein b) = 0_s$ and
$\varphi_{qt}(b)\cdot_t\varphi_{rt}(\nein c) = 0_t$,
whence we deduce that
$\varphi_{ps}(a) \le_s \no\varphi_{qs}(\nein b)$ and
$\varphi_{rt}(\nein c) \le_t \nein\varphi_{qt}(b)$.
Taking $v = s\lor t$, we deduce by~(mon) that
$\varphi_{pv}(a) \le_v \varphi_{sv}(\no\varphi_{qs}(\nein b))$ and
$\varphi_{rv}(\nein c) \le_v \varphi_{tv}(\nein\varphi_{qt}(b))$.
Moreover, by~(lax), we have that
$\nein\varphi_{qt}(b) \le \varphi_{qs}(\nein b)$
and by Lemma~\ref{lem:glueing:involution}, we deduce that
$\no\varphi_{qs}(\nein b) \le \no\nein\varphi_{qt}(b) = \varphi_{qt}(b)$,
and therefore
\[
\varphi_{pv}(a)\cdot_v\varphi_{rv}(\nein c) \le_v \varphi_{sv}(\no\varphi_{qs}(\nein b)) \cdot_v \varphi_{tv}(\nein\varphi_{qt}(b)) = 0_v,
\]
so $\varphi_{pv}(a)\cdot_v\varphi_{rv}(\nein c) = 0_v$ and hence
$\varphi_{uv}(\varphi_{pu}(a)\cdot_u\varphi_{ru}(\nein c)) = \varphi_{uv}(\varphi_{pu}(a))\cdot_v\varphi_{uv}(\varphi_{ru}(\nein c)) = \varphi_{pv}(a)\cdot_v\varphi_{rv}(\nein c) = 0_v$. By~\eqref{eq:condition:on:zeros}, $v=u=p\lor r$ and $\varphi_{pu}(a)\cdot_u\varphi_{ru}(\nein c) = 0_v$, that is,
$a\le c$.
\end{shortproof}

\begin{remark}
Notice that if a compatible family $\Phi$ satisfies~\eqref{eq:Phi:satisfies:bal}, then it satisfies~(lax) if and only if for all $p \le^{\m D} q$, $p \le^{\m D} r$, and $a\in A_p$, $\no \varphi_{pq}(a) \le \varphi_{pr}(\no a)$.
\end{remark}

\begin{lemma}\label{lem:antisymmetry:leG}
If $\int_\Phi \m A_p$ is the glueing of a semilattice directed system of integral ipo\nbd-monoids $(\m D, \cl A, \Phi)$ and $\Phi$ satisfies~\eqref{eq:condition:on:zeros}, \eqref{eq:Phi:satisfies:bal}, and~\textnormal{(lax)}, then $\le$ is antisymmetric.
\end{lemma}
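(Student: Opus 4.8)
The plan is to show that $a \le b$ and $b \le a$ force $a = b$ by first reducing to the case where $a$ and $b$ lie in the \emph{same} component. So fix $a \in A_p$ and $b \in A_q$ with $a \le b$ and $b \le a$, and set $r = p \lor q$. Once I have shown $p = q$, both elements lie in $A_p$, where by Proposition~\ref{prop:glueing:is:an:aggregate} the relation $\le$ restricts to the genuine partial order $\le_p$ of the poset $\m A_p$; antisymmetry of $\le_p$ then yields $a = b$. Thus everything reduces to proving $p = q$.

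The engine of the argument is the following claim: whenever $a \le b$ with $a \in A_p$, $b \in A_q$ and $r = p \lor q$, one has $\varphi_{pr}(a) \le_r \varphi_{qr}(b)$ \emph{inside the component} $\m A_r$. To see this I unfold the definition of $\le$: since $\nein b \in A_q$, the Płonka product gives that $a \le b$ means $\varphi_{pr}(a) \cdot_r \varphi_{qr}(\nein b) = 0_r$. As $\m A_r$ is integral, $0_r$ is its least element and $x \cdot_r y = 0_r$ is equivalent to $x \le_r \no_r y$; hence $\varphi_{pr}(a) \le_r \no_r \varphi_{qr}(\nein b)$. Now I apply \textnormal{(lax)} in the form $\nein_r \varphi_{qr}(b) \le_r \varphi_{qr}(\nein b)$ and apply the antitone map $\no_r$ to obtain $\no_r \varphi_{qr}(\nein b) \le_r \varphi_{qr}(b)$; chaining in the poset $\m A_r$ gives $\varphi_{pr}(a) \le_r \varphi_{qr}(b)$, as claimed. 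Applying this to both $a \le b$ and $b \le a$ and using antisymmetry of $\le_r$ shows $\varphi_{pr}(a) = \varphi_{qr}(b) =: e$.

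With this equality in hand the conclusion follows from zero avoidance. Since each $\varphi$ is a monoid homomorphism and $b \cdot_q \nein_q b = 0_q$ in the integral monoid $\m A_q$, I compute $\varphi_{qr}(0_q) = \varphi_{qr}(b) \cdot_r \varphi_{qr}(\nein b) = e \cdot_r \varphi_{qr}(\nein b) = a \cdot \nein b = 0_r$; by \eqref{eq:condition:on:zeros} this forces $q = r = p \lor q$. Running the symmetric computation from $b \le a$ (using $\varphi_{pr}(a) = e$ and $a \cdot_p \nein_p a = 0_p$) gives $\varphi_{pr}(0_p) = \varphi_{pr}(a) \cdot_r \varphi_{pr}(\nein a) = b \cdot \nein a = 0_r$, hence $p = r$. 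Therefore $p = q = r$, which completes the reduction.

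The main obstacle is the key claim of the second paragraph. The natural instinct---that $a \le b$ should imply $\varphi_{pr}(a) \le_r \varphi_{qr}(b)$---is exactly a monotonicity statement, yet the maps in $\Phi$ are only assumed to be monoid homomorphisms and need not be order preserving; indeed monotonicity \textnormal{(mon)} is part of transitivity, which is \emph{not} among the present hypotheses. The resolution is that \textnormal{(lax)} supplies just enough one-sided control over how $\varphi$ interacts with $\nein$ to force the comparison inside a single integral component $\m A_r$, where the order is already well behaved. After that, the passage from the equality $\varphi_{pr}(a) = \varphi_{qr}(b)$ to the identity $p = q$, via the homomorphism identity $x \cdot \nein x = 0$ and \eqref{eq:condition:on:zeros}, is the decisive but, by comparison, routine step.
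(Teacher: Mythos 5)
Your proof is correct and follows essentially the same route as the paper's: both use (lax) (together with double negation) to turn the defining equalities $\varphi_{pr}(a)\cdot_r\varphi_{qr}(\nein b)=0_r$ and $\varphi_{qr}(b)\cdot_r\varphi_{pr}(\nein a)=0_r$ into a chain of inequalities inside the single component $\m A_r$, then apply \eqref{eq:condition:on:zeros} to $\varphi_{pr}(0_p)=0_r$ and $\varphi_{qr}(0_q)=0_r$ to force $p=q=r$, and finish by antisymmetry of $\le_p$. The only cosmetic difference is that you first extract the equality $\varphi_{pr}(a)=\varphi_{qr}(b)$ by antisymmetry in $\m A_r$ before invoking zero avoidance, whereas the paper reads $\varphi_{pr}(a)\le_r\no\varphi_{pr}(\nein a)$ directly off the chained inequalities.
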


\begin{proof}
Suppose that $p,q\in D$ with $r = p\lor q$, and $a\in A_p$ and $b\in A_q$ are such that $a \le b$ and $b \le a$. That is, $\varphi_{pr}(a)\cdot_r \varphi_{qr}(\nein b) = 0_r$ and $\varphi_{qr}(b)\cdot_r \varphi_{pr}(\nein a) = 0_r$, or equivalently $\varphi_{pr}(a) \le_r \no\varphi_{qr}(\nein b)$ and $\varphi_{qr}(b) \le_r \no\varphi_{pr}(\nein a)$. By~(lax) and the preceding remark, we get
\[
\varphi_{pr}(a) \le_r \no\varphi_{qr}(\nein b)
\le_r \varphi_{qr}(\no\nein b) =
\varphi_{qr}(b) \le_r \no\varphi_{pr}(\nein a).
\]
Hence, we would have that
$\varphi_{pr}(0_p) = \varphi_{pr}(a\cdot_p \nein_pa) = \varphi_{pr}(a)\cdot_r \varphi_{pr}(\nein_pa) = 0_r$.
By~\eqref{eq:condition:on:zeros}, this only is possible if $p = r$. By a symmetric argument, we also obtain that $q = r$, and therefore $p = q$. Thus, by Proposition~\ref{prop:glueing:is:an:aggregate}, we have that $a \le_p b$ and $b \le_p a$, and therefore $a = b$.
\end{proof}

We are now in the position to prove our main result.

\begin{theorem}\label{thm:decompsition:and:glueing}
A structure $\m A$ is a locally integral ipo\nbd-semigroup if and only if there is a semilattice directed system $(\m D,\cl A, \Phi)$ of integral ipo\nbd-monoids satisfying~\eqref{eq:condition:on:zeros}, \eqref{eq:Phi:satisfies:bal},
(mon), and~(lax) such that $\m A = \int_\Phi \m A_p$. Moreover, $\m A$ has a global identity if and only if $\m D$ has a minimum element.
\end{theorem}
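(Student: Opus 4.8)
The plan is to assemble the lemmas of this section into the two implications, and to treat the global-identity clause separately at the end.

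For the forward direction, I would start from a locally integral ipo\nbd-semigroup $\m A$ and take the system $(\m A^+, \cl A, \Phi)$ with $\varphi_{pq}(x)=qx$, which Theorem~\ref{thm:decomposition:ipo-monoids} already exhibits as a semilattice directed system of integral ipo\nbd-monoids with $\m A=\int_\Phi\m A_p$. It then suffices to check the four conditions on $\Phi$: \eqref{eq:condition:on:zeros} is precisely Lemma~\ref{lem:A:liipomon:condition:on:zeros}; \eqref{eq:Phi:satisfies:bal} holds because Lemma~\ref{lem:A:is:balanced} establishes~\eqref{eq:cond:equivalent:to:Phi:satisfies:bal}, which is equivalent to it; and since $\m A$ is a genuine ipo\nbd-semigroup its order is transitive, so $\Phi$ satisfies~\eqref{eq:transitivity:leG}, whence Lemma~\ref{lem:char:transitivity:leG}, applied with~\eqref{eq:condition:on:zeros} and~\eqref{eq:Phi:satisfies:bal} already in hand, delivers (mon) and (lax) at no extra cost.

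For the converse, given a system satisfying the four conditions, I would first show $\int_\Phi\m A_p$ is an ipo\nbd-semigroup. Reflexivity of $\le$ and double negation come from Remark~\ref{rem:leG:reflexive}; antitonicity of $\nein,\no$ and rotation from Lemma~\ref{lem:glueing:involution}; transitivity from Lemma~\ref{lem:char:transitivity:leG}, now used in the direction that (mon) and (lax) imply~\eqref{eq:transitivity:leG}; and antisymmetry from Lemma~\ref{lem:antisymmetry:leG}. As the semigroup reduct is the P\l{}onka sum, associativity holds, so every ipo\nbd-semigroup axiom is met. Local integrality I would then obtain through Proposition~\ref{prop:char:locally:integral}: for $x\in A_p$, products of elements of $A_p$ remain in $A_p$ and restrict to the operations of the integral component $\m A_p$ by Proposition~\ref{prop:glueing:is:an:aggregate}, so conditions (1), (3), and (4) reduce to computations inside $\m A_p$, where they hold because $\m A_p$ is integral, hence balanced and square decreasing, with bottom $0_p$. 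Condition (2) is where the real content lies: after computing $x/x=\no(x\cdot\nein x)=\no 0_p=1_p$, the required inequality $y\le 1_p y$ is exactly the statement $1_p\in G^+$ furnished by Corollary~\ref{cor:positive:and:zeros:in:G}. This is the one step that genuinely leaves a single component, since positivity of the local identities against all of $\biguplus A_p$ cannot be seen inside any one $\m A_p$ and must be extracted from the hypotheses through the glueing.

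Finally, for the moreover clause I would use that, by Corollary~\ref{cor:positive:and:zeros:in:G} and Lemma~\ref{lem:za:implies:iso:D:positives}, the map $p\mapsto 1_p$ is an order isomorphism from $\m D$ onto the positive cone $G^+$ of $\int_\Phi\m A_p$, and argue that a global identity exists if and only if $G^+$ has a least element. If $\m D$ has minimum $m$, then $m\lor q=q$ for every $q$, so $1_m\cdot x=\varphi_{mq}(1_m)\cdot_q x=1_q\cdot_q x=x$ for $x\in A_q$, making $1_m$ a global identity; conversely any global identity is positive and lies below every positive element, hence equals $1_m$ for the minimum $m$ of $\m D$ under the isomorphism. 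The whole argument is thus an orchestration of the preceding lemmas, the only non-bookkeeping point being the verification of local integrality, and within it the appeal to Corollary~\ref{cor:positive:and:zeros:in:G} for the global positivity of the local identities.
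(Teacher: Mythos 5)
Your proposal is correct and follows essentially the same route as the paper: both directions are assembled from the same lemmas (Theorem~\ref{thm:decomposition:ipo-monoids}, Lemmas~\ref{lem:A:liipomon:condition:on:zeros}, \ref{lem:A:is:balanced}, \ref{lem:char:transitivity:leG}, \ref{lem:antisymmetry:leG}, Remark~\ref{rem:leG:reflexive}, Lemma~\ref{lem:glueing:involution}, Corollary~\ref{cor:positive:and:zeros:in:G}, and Lemma~\ref{lem:za:implies:iso:D:positives}), and the only cosmetic difference is that you verify local integrality of the glueing via the characterization in Proposition~\ref{prop:char:locally:integral} while the paper checks the defining conditions directly, with both arguments hinging on the same appeal to Corollary~\ref{cor:positive:and:zeros:in:G} for positivity of the $1_p$.
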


\begin{proof}
As we showed in Theorem~\ref{thm:decomposition:ipo-monoids}, if $\m A$ is a locally integral ipo\nbd-semigroup, then $\m A^+$ is a join-semilattice, the integral components of $\m A$ form a family $\{\m A_p : p\in A^+\}$ of integral ipo\nbd-monoids, and $\Phi = \{\varphi_{pq}\: \m A_p \to \m A_q : p \le q\}$, where $\varphi_{pq}(x) = qx$, is a compatible family of monoid homomorphisms, such that  $\m A = \int_\Phi \m A_p$. Moreover, $\Phi$ is balanced by Lemma~\ref{lem:A:is:balanced}, avoids zeros by Lemma~\ref{lem:A:liipomon:condition:on:zeros}, and satisfies condition~\eqref{eq:transitivity:leG}, and therefore~(mon) and~(lax), since $\le$ is a partial order.

Conversely, if $\m D = (D,\lor)$ is a join-semilattice, $\{\m A_p : p\in D\}$ is a family of integral ipo\nbd-monoids, and $\Phi = \{\varphi_{pq}\: \m A_p \to \m A_q : p \le^{\m D} q\}$ is a compatible family of monoid homomorphisms satisfying~\eqref{eq:Phi:satisfies:bal}, \eqref{eq:condition:on:zeros}, (mon), and~(lax), then $\le$ is a reflexive binary relation on $A=\biguplus A_p$ by Remark~\ref{rem:leG:reflexive}, which is also transitive by Lemma~\ref{lem:char:transitivity:leG}, and antisymmetric by Lemma~\ref{lem:antisymmetry:leG}. That is, $\big(\biguplus A_p, {\le}\,\big)$ is a poset. By construction as a P\l{}onka sum of monoids, $\big(\biguplus A_p, \cdot\big)$ is a semigroup. Furthermore, the involutive negations $\no$ and $\nein$ satisfy~(dn), by Remark~\ref{rem:leG:reflexive}, and since $\Phi$ satisfies~\eqref{eq:Phi:satisfies:bal} they are antitone by Lemma~\ref{lem:glueing:involution} and $\int_\Phi \m A_p$ satisfies~(rot). Hence, $\int_\Phi \m A_p$ is an ipo\nbd-semigroup.

It can be readily checked that for all $p\in D$ and $x\in A_p$, $\no x\cdot x = x\cdot\nein x$, since these involutive negations and products are computed inside $\m A_p$, which is integral. Moreover, $1_x = \no (x \cdot \nein x) = \no_p (x \cdot_p \nein_p x) = 1_p$, which by Corollary~\ref{cor:positive:and:zeros:in:G} is positive, and $1_x\cdot x = 1_p\cdot x = x$. For the same reasons, one can check that $x \le 1_p = 1_x$ and $x\backslash 1_x = x\backslash 1_p = \nein(0_p\cdot x) = \nein 0_p = 1_p = 1_x$, since the restriction of $\le$ to $A_p$ is $\le_p$ and the product and negation are computed inside $\m A_p$, by Proposition~\ref{prop:glueing:is:an:aggregate}, and $\m A_p$ is integral. In summary, $\int_\Phi \m A_p$ is a locally integral ipo\nbd-semigroup. 

Since for all $p\in D$ and $x\in A_p$, $1_x = 1_p$, we deduce that $\{\m A_p : p\in D\}$ is the family of integral components of $\int_\Phi \m A_p$. Also, by Proposition~\ref{prop:glueing:is:an:aggregate}, we know that $\varphi_{pq}(x) = 1_q\cdot x$, for all $p \le^{\m D} q$ and $x\in A_p$, that is, $\Phi$ is the family of homomorphisms of the decomposition of Theorem~\ref{thm:decomposition:ipo-monoids}.

Finally, if the locally integral ipo\nbd-semigroup $\m A$ has a global identity $1$, then it is the smallest of all the identities of $\m A$, that is, the minimum of $\m A^+$, as we already observed at the beginning of Section~\ref{sec:locally:integral:ipo-semigroups}. For the other direction, by Lemma~\ref{lem:za:implies:iso:D:positives}, there is an isomorphism between the semilattices $\m A^+$ and~$\m D$. Therefore, if $\m D$ has a minimum element $p$, so does $\m A^+$, namely, $1_p$. And, as we mentioned before Proposition~\ref{prop:A+:join:semilattice}, this implies that $1_p$ is the global identity of $\m A$.
\end{proof}

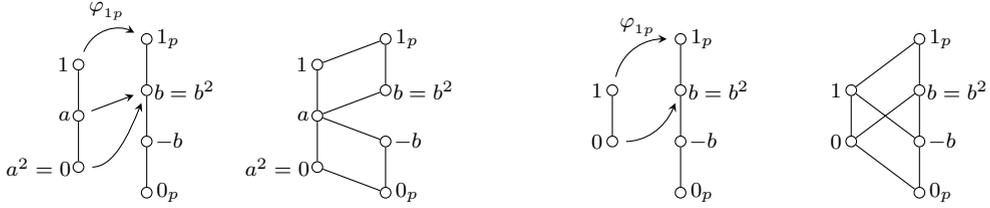
\begin{figure}[t]
\centering\scriptsize
\begin{tikzpicture}[yscale=1.5]
\node(1) at (0,1)[label=left:$1$]{};
\node(a) at (0,0)[label=left:$a$]{};
\node(0) at (0,-1)[label=left:{$a^2=0$}]{};
\node(1p) at (2,1.5)[label=right:$1_p$]{};
\node(b) at (2,0.5)[label=right:{$b=b^2$}]{};
\node(-b) at (2,-0.5)[label=right:$\no b$]{};
\node(0p) at (2,-1.5)[label=right:$0_p$]{};

\draw (0)--(a)--(1)
    (0p)--(-b)--(b)--(1p);
\draw [-stealth, shorten >= 3pt, shorten <= 3pt] (1) to [out=60,in=160] node[draw=none, fill=none, above]{\scriptsize$\varphi_{_{1p}}$} (1p);
\draw [-stealth, shorten >= 3pt, shorten <= 3pt] (a) to (b);
\draw [-stealth, shorten >= 3pt, shorten <= 3pt] (0) to [out=0,in=-120] (b);

\node(1') at (7,1)[label=left:$1$]{};
\node(a') at (7,0)[label=left:$a$]{};
\node(0') at (7,-1)[label=left:{$a^2=0$}]{};
\node(1p') at (9,1.5)[label=right:$1_p$]{};
\node(b') at (9,0.5)[label=right:{$b=b^2$}]{};
\node(-b') at (9,-0.5)[label=right:$\no b$]{};
\node(0p') at (9,-1.5)[label=right:$0_p$]{};

\draw (0')--(a')--(1')--(1p')--(b')--(a')--(-b')--(0p')--(0');
\end{tikzpicture}
\qquad\qquad\qquad
\begin{tikzpicture}[yscale=1.5]
\node(1) at (0,.5)[label=left:$1$]{};
\node(0) at (0,-.5)[label=left:{$0$}]{};
\node(1p) at (2,1.5)[label=right:$1_p$]{};
\node(b) at (2,0.5)[label=right:{$b=b^2$}]{};
\node(-b) at (2,-0.5)[label=right:$\no b$]{};
\node(0p) at (2,-1.5)[label=right:$0_p$]{};

\draw (0)--(1)
    (0p)--(-b)--(b)--(1p);
\draw [-stealth, shorten >= 3pt, shorten <= 3pt] (1) to [out=70,in=180] node[draw=none, fill=none, above]{\scriptsize$\varphi_{_{1p}}$} (1p);
\draw [-stealth, shorten >= 3pt, shorten <= 3pt] (0) to [out=0,in=-120] (b);

\node(1') at (7,.5)[label=left:$1$]{};
\node(0') at (7,-.5)[label=left:{$0$}]{};
\node(1p') at (9,1.5)[label=right:$1_p$]{};
\node(b') at (9,0.5)[label=right:{$b=b^2$}]{};
\node(-b') at (9,-0.5)[label=right:$\no b$]{};
\node(0p') at (9,-1.5)[label=right:$0_p$]{};

\draw (0')--(1')--(1p')--(b')--(0')--(0p')--(-b')--(1') (b')--(-b');
\end{tikzpicture}
\caption{Two glueings, one being a semiring, the other just an ipo\nbd-monoid.}
\label{fig:two:glueings}
\end{figure}

\begin{corollary}
Given any nonempty family of nontrivial integral ipo\nbd-monoids (involutive semirings, respectively) there is a locally integral ipo\nbd-semigroup (involutive semiring, respectively) whose integral components are the given ones. Furthermore, we can construct it so that it has a global identity.
\end{corollary}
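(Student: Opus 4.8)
The plan is to realize the prescribed family as the family of integral components of a suitable glueing and then appeal to Theorem~\ref{thm:decompsition:and:glueing}. Write the given nonempty family as $\{\m B_i : i\in I\}$. First I would well-order $I$ and view $\m D=(I,\le)$ as a chain; being totally ordered it is a join-semilattice with $p\lor q=\max\{p,q\}$, and being well-ordered it has a minimum $p_0$. I would then put $\m A_p=\m B_p$ for each $p\in I$ and define $\Phi$ by the \emph{constant-top} maps $\varphi_{pq}(x)=1_q$ for $p<q$ in $\m D$, with $\varphi_{pp}$ the identity. Each $\varphi_{pq}$ is a monoid homomorphism, since it sends $1_p$ to $1_q$ and, being constant on $A_p$, sends every product to $1_q=1_q\cdot 1_q$; and the family is compatible, because the composite of two constant-top maps is again constant-top. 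Thus $(\m D,\{\m A_p\},\Phi)$ is a semilattice directed system of integral ipo\nbd-monoids.

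Next I would verify the four hypotheses of Theorem~\ref{thm:decompsition:and:glueing}. Off the diagonal, both sides of~\eqref{eq:Phi:satisfies:bal} equal $\nein 1_q=0_q=\no 1_q$, and on the diagonal~\eqref{eq:Phi:satisfies:bal} is just double negation, so $\Phi$ is balanced; condition (mon) is immediate because $\varphi_{pq}$ is constant for $p<q$ and the identity for $p=q$. Condition~\eqref{eq:condition:on:zeros} is exactly where the nontriviality of the $\m B_i$ is used: for $p<q$ we have $\varphi_{pq}(0_p)=1_q\neq 0_q$, while $\varphi_{pp}(0_p)=0_p$. The only hypothesis needing attention is (lax); since $q,r\ge p$ are comparable in the chain, a routine case analysis reduces it to two facts about the constant-top glueing---(A) every element lies below the top $1_r$ of any component above or equal to its own, which is Proposition~\ref{prop:glueing:is:an:aggregate}, and (B) the bottom $0_q$ of a component lies below every element of any strictly lower component, which holds because $0_q\cdot\nein c=0_q\cdot 1_q=0_q=0_{q\lor s}$ for $c\in A_s$ with $s<q$. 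With all four conditions in hand, Theorem~\ref{thm:decompsition:and:glueing} yields that $\int_\Phi\m A_p$ is a locally integral ipo\nbd-semigroup whose integral components are exactly $\{\m A_p:p\in D\}=\{\m B_i:i\in I\}$, and, since $\m D$ has the minimum $p_0$, it has a global identity $1_{p_0}$. This settles both assertions of the corollary at once.

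For the involutive-semiring version I would keep the same construction and argue that the glued order is a lattice. Since $\nein$ is an order-reversing involution by Lemma~\ref{lem:glueing:involution}, it transposes joins and meets, so it is enough to produce all binary joins; distributivity of the product over joins then comes for free from residuation, making $\int_\Phi\m A_p$ an involutive semiring. To exhibit joins I would read off from the constant-top maps the cross-component order: for $p<q$, an element of $A_p$ lies below $c\in A_q$ iff $c=1_q$, and $c\in A_q$ lies below an element of $A_p$ iff $c=0_q$. It follows that for $a\in A_p$ and $b\in A_q$ with $p<q$ one has $a\lor b=1_q$ unless $b=0_q$, in which case $a\lor b=a$; within a single component the join is the join of the (lattice-ordered) component. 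Hence all binary joins exist and $\int_\Phi\m A_p$ is lattice-ordered.

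The conceptual point is that passing to a chain makes the directed-system axioms nearly automatic: compatibility, (mon), and balancedness are trivial for constant-top maps, and zero avoidance uses only nontriviality. The one step I expect to require genuine care---and hence the main obstacle---is (lax), although the chain collapses it to the elementary inequalities (A) and (B) above; in the semiring case the analogous care goes into confirming the cross-component joins, which is the only place the lattice structure of the components is actually used.
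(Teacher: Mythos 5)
Your proposal is correct and follows essentially the same route as the paper: linearly order the index set (your well-ordering just builds in the minimum that the paper adds as a separate choice), glue along the constant-top homomorphisms $\varphi_{pq}(x)=1_q$, check \eqref{eq:condition:on:zeros}, \eqref{eq:Phi:satisfies:bal}, (mon), and (lax), and invoke Theorem~\ref{thm:decompsition:and:glueing}, with the same case analysis of the cross-component order for the semiring claim. Your write-up is in fact more explicit than the paper's, which asserts the four conditions without verification.
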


\begin{proof}
If $\{\m A_p : p\in D\}$ is a nonempty family of nontrivial ipo\nbd-monoids, let's choose a linear order on $D$ and let $\m D=(D,\lor)$ be the associated join-semilattice. Let $\Phi = \{\varphi_{pq}\: \m A_p \to \m A_q : p \le^{\m D} q\}$ be the family of maps such that $\varphi_{pp}$ is the identity map and $\varphi_{pq}(x) = 1_q$ if $p < q$. Then $\Phi$ is a compatible family of monoid homomorphisms satisfying~\eqref{eq:condition:on:zeros}, \eqref{eq:Phi:satisfies:bal}, (mon), and~(lax). By Theorem~\ref{thm:decompsition:and:glueing}, $\int_\Phi \m A_p$ is a locally integral ipo\nbd-monoid whose integral components are $\{\m A_p : p\in D\}$. Moreover, for every $a\in A_p$, $b\in A_q$ with $p< q$, $a\le b$ if and only if $b=1_q$, and $b\le q$ if and only if $a=0_p$.

If in addition all the integral components are involutive semirings, then $\int_\Phi \m A_p$ is also an involutive semiring, since the join of $a\in A_p$ and $b\in A_q$ in $\int_\Phi \m A_p$ is their join in $\m A_p$, if $p = q$, or $1_q$ if $p<q$ and $b\neq 0_q$, or $a$ if $p < q$ and $b = 0_q$. If moreover we choose $(D,\lor)$ to have a lower bound $p$, then  $1_p$ is the global identity of $\int_\Phi\m A_p$.
\end{proof}

The last theorem is the object correspondence of a categorical equivalence. First, for any arbitrary category $\ct C$, we can define the category $\ct C^\SD$ of semilattice directed systems on $\ct C$ as the category whose objects are functors $\Phi\:\ct D\to \ct C$, where $\ct D$ is a skeletal and thin category with binary coproducts (i.e., a join-semilattice). A morphism $\Phi\to\Psi$ in $\ct C^\SD$ is a pair $\pair{\tau,\eta}$, where ${\tau\:\ct D_\Phi\to\ct D_\Psi}$ is a coproduct-preserving functor and $\eta\: \Phi\nat\Psi\tau$ is a natural transformation. 

Now, let $\LIIPOSg$ be the category whose objects are the locally integral ipo\nbd-semigroups and morphisms are the monotone algebraic homomorphisms. Let $\IIPOMn$ be the category of integral ipo\nbd-monoids and monoid homomorphisms, and let $\SDIIPOMn$ be the full subcategory of $\IIPOMn^\SD$ whose objects are the functors $\Phi\:\ct{D}\to \IIPOMn$ satisfying~\eqref{eq:condition:on:zeros}, \eqref{eq:Phi:satisfies:bal}, (mon), and~(lax).
\footnote{~Notice that both (mon) and (bal) could also be encoded as conditions on the morphisms of $\IIPOMn$, but (za) and (lax) cannot.}

Every morphism $\eta\:\m A\to\m B$ induces a semilattice homomorphism $\tau\: \m A^+\to\m B^+$ and monoid homomorphisms $\eta_p\:\m A_p\to \m B_{\tau(p)}$\,---which are the corresponding restrictions of~$\eta$---\,such that 
\[
\text{for all $p\le q$ in $A^+$,}\quad \eta_q\varphi_{pq}^\m A = \varphi_{\tau(p)\tau(q)}^\m B\eta_p.\tag{nat}
\]
Conversely, any semilattice homomorphism $\tau\:\m A^+\to\m B^+$ and family of monoid homomorphisms $\eta_p\:\m A_p\to\m B_{\tau(p)}$ satisfying~(nat) induce a morphism $\eta\:\m A\to\m B$. That is, every morphism $\eta\:\m A\to \m B$ in $\LIIPOSg$ corresponds to a morphism $\pair{\tau,\eta}\:\Phi^\m A\to\Phi^\m B$ in $\SDIIPOMn$. For the other direction, every object $\Phi$ of $\SDIIPOMn$ gives rise to an object $\int\Phi$ in $\LIIPOSg$, and every morphism ${\pair{\tau,\eta}\:\Phi\to\Psi}$ induces a morphism $\eta\:\int\Phi\to\int\Psi$ in $\LIIPOSg$. This correspondence is an equivalence of categories between $\LIIPOSg$ and $\SDIIPOMn$.

A morphism in $\LIIPOSg$ is an \emph{embedding} if it is order-reflecting, and therefore injective. $\m A$ is a \emph{substructure} of $\m B$ if and only if the inclusion map $A\to B$ is an embedding, that is, if $\m A^+$ is a subsemilattice of $\m B^+$, for every $p\in A^+$, $\m A_p$ is a substructure of $\m B_p$, and for all $p\le q$ in $A^+$, the associated monoid homomorphisms $\varphi_{pq}^\m A\:\m A_p\to\m A_q$ is the restriction of the corresponding $\varphi_{pq}^\m B\:\m B_p\to \m B_q$. Also, $\m A$ is isomorphic to $\m B$ if and only if there is a semilattice isomorphism $\tau\:\m A^+\to \m B^+$ and isomorphisms $\eta_p\:\m A_p\to \m B_{\tau(p)}$ for all $p\in A^+$ satisfying~(nat).

We have already characterized which locally integral ipo\nbd-semigroups are the $1$-free reducts of the locally integral ipo\nbd-monoids, namely those that have a global identity, or equivalently, those that have a smallest positive element. We end this section by responding to a very natural question: which locally integral ipo\nbd-semigroups are the subreducts of locally integral ipo\nbd-monoids? Notice that, by the discussion above, checking if a locally integral ipo\nbd-semigroup is a substructure of or isomorphic to another one can be done componentwise.

\eject

\begin{theorem}
For any locally integral ipo\nbd-semigroup $\m A$, the following conditions are equivalent:
\begin{enumerate}
    \item $\m A$ is a subreduct of a locally integral ipo\nbd-monoid.
    \item $\m A$ satisfies the equation $0_x0_y = 0_{xy}$.
    \item For all $p,q\in A^+$, $0_p0_q = 0_{pq}$.
    \item For all $p,q\in A^+$, $0_p \le 1_q$.
\end{enumerate}
Moreover, the same conditions characterize the locally integral ipo\nbd-semirings that are subreducts of a locally integral unital semiring.
\end{theorem}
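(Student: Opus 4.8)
The plan is to prove the cheap equivalences $(2)\Leftrightarrow(3)\Leftrightarrow(4)$ first, and then close the loop through the ``monoid'' condition $(1)$ by showing $(1)\Rightarrow(2)$ and $(4)\Rightarrow(1)$. For $(2)\Leftrightarrow(3)$ I would reduce the equation to positive elements: by Lemma~\ref{lem:local:identities} we have $0_x=\nein 1_x=\nein 1_{1_x}=0_{1_x}$, and by Lemma~\ref{lem:prod:in:Aprod} we have $1_{xy}=1_x1_y$, hence $0_{xy}=0_{1_x1_y}$; so the instance of $0_x0_y=0_{xy}$ at $(x,y)$ is literally the instance $0_p0_q=0_{pq}$ at the positive pair $(p,q)=(1_x,1_y)$, while $(3)$ is the restriction of $(2)$ to $A^+$ (where $1_p=p$). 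For $(3)\Leftrightarrow(4)$ I would apply Lemma~\ref{lem:A:is:balanced} to $0_p\in A_p$ and $1_q\in A_q$: since $\nein 1_q=0_q$, it reads $0_p\le 1_q\iff 0_p\cdot\nein 1_q=0_{pq}\iff 0_p0_q=0_{pq}$, which is exactly the desired equivalence.

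For $(1)\Rightarrow(2)$ I would first observe that every locally integral ipo\nbd-monoid satisfies $(4)$: its global identity is the minimum of the positive cone, so $0=\nein 1$ is the top of $A^\sim$, whence $0_p\le 0\le 1\le 1_q$ for all positive $p,q$. By the equivalences just proved, such a monoid also satisfies the equation $(2)$. Now $0_x$, $0_y$, and $0_{xy}$ are all terms in the signature $\{\cdot,\nein,\no\}$ (e.g. $0_x=x\cdot\nein x$), so $(2)$ is an identity and is inherited by every subreduct. Hence any $\m A$ that embeds as a subreduct of a locally integral ipo\nbd-monoid satisfies $(2)$.

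The substantive direction is $(4)\Rightarrow(1)$. Assuming $(4)$, I would enlarge $\m D=\m A^+$ by a new bottom element $\bot$, assign to it the integral component $\m 2=\{0,1\}$ (the two\nbd-element Boolean algebra), keep all old components and homomorphisms, and define $\varphi_{\bot p}$ to be the constant map with value $1_p$. One checks that this $\Phi'$ is compatible and satisfies \eqref{eq:condition:on:zeros} (since $\varphi_{\bot p}(0)=1_p\neq 0_p$ for $p\neq\bot$), \eqref{eq:Phi:satisfies:bal}, and (mon) ($\varphi_{\bot p}$ is constant). The only condition with real content is (lax): for the new base point, $\bot\le q$ and $\bot\le r$ give $\nein\varphi_{\bot q}(a)=0_q\le 1_r=\varphi_{\bot r}(\nein a)$, which is precisely $(4)$; the remaining instances reduce to $0_q\le 0,1$ and $0\le 1\le 1_r$, which hold automatically. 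By Theorem~\ref{thm:decompsition:and:glueing}, $\int_{\Phi'}\m A_p$ is then a locally integral ipo\nbd-semigroup, and it has a global identity because $\m D'$ has the minimum $\bot$; thus it is a locally integral ipo\nbd-monoid. Its restriction to $\biguplus_{p\in D}A_p=A$ recovers $\m A$ (same P\l{}onka sum and same order on $A$), so $\m A$ is a subreduct, giving $(1)$.

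The hard part is exactly this construction: zero avoidance \eqref{eq:condition:on:zeros} forbids sending $0_\bot$ to $0_p$, yet the new bottom must sit order\nbd-below the whole structure, and reconciling these forces the constant choice $\varphi_{\bot p}\equiv 1_p$, after which (lax) turns out to be nothing but hypothesis $(4)$. For the \emph{moreover} clause, $(1)\Rightarrow(2)$ is unchanged (an identity survives subreducts), and for $(2)\Rightarrow(1)$ I would run the same construction, the extra task being to verify that $\int_{\Phi'}\m A_p$ is lattice\nbd-ordered. The key lemma is that under $(4)$ the set $A^+$ is closed under binary meets: for $m=1_p\land 1_q$, condition $(4)$ gives $0_r\le m$ for every $r$, so $0_p\le m\le 1_p$ forces $1_m\le 1_p$ by Lemma~\ref{lem:char:Ax}, and likewise $1_m\le 1_q$, whence $m\le 1_m\le 1_p\land 1_q=m$, so $1_m=m$ and $m\in A^+$ by Lemma~\ref{lem:char:positives:and:zeros}. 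Dually $A^\sim$ is closed under joins, so the adjoined element $0$ lies above every join of elements of $A^\sim$ and destroys no existing least upper bound; a routine computation of the new joins $a\lor 0$ and $a\lor 1$ then shows that $\int_{\Phi'}\m A_p$ is an involutive semiring containing $\m A$ as a sub\nbd-semiring, and it is unital because $\bot$ is the least element of $\m D'$.
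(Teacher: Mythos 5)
Your overall strategy coincides with the paper's: the easy equivalences among (2), (3), (4); the observation that (2) is an identity in the signature $\{\cdot,\nein,\no\}$ and hence passes to subreducts; and, for $(4)\Rightarrow(1)$, adjoining a new bottom $\bot$ to $\m A^+$ with constant homomorphisms $\varphi_{\bot p}\equiv 1_p$ and invoking Theorem~\ref{thm:decompsition:and:glueing}, the whole content of (lax) at the new base point being exactly hypothesis (4). Two of your local arguments are actually cleaner than the paper's: the two-way proof of $(3)\Leftrightarrow(4)$ by specializing Lemma~\ref{lem:A:is:balanced} to $x=0_p$, $y=1_q$ (the paper proves only $(3)\Rightarrow(4)$ and closes the loop through (1)), and the reduction of (2) to (3) via $0_x=0_{1_x}$ and $1_{xy}=1_x1_y$. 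Your extra lemma that condition (4) makes $A^+$ closed under binary meets, used to check that adjoining $0_\bot$ above $A^\sim$ does not disturb existing joins in the semiring case, is correct and is a point the paper treats more tersely.

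There is, however, one step that fails as written. In verifying zero avoidance for the extended system you assert $\varphi_{\bot p}(0_\bot)=1_p\neq 0_p$, which is false precisely when some integral component $\m A_p$ is trivial. In that case $\widehat\Phi$ violates \eqref{eq:condition:on:zeros} and Theorem~\ref{thm:decompsition:and:glueing} cannot be applied; indeed one then gets $0_\bot\le 1_p$ and $1_p\le 0_\bot$ with $0_\bot\ne 1_p$, so the glued relation is not even antisymmetric. The repair is the case split the paper makes: if $0_p=1_p$ for some $p\in A^+$, then condition (4) gives $1_p=0_p\le 1_q=q$ for every $q\in A^+$, so $p$ is the minimum of $\m A^+$ and hence already a global identity of $\m A$; thus $\m A$ is itself the $1$-free reduct of a locally integral ipo-monoid and no construction is needed. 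Your argument then covers the remaining case, where every component is nontrivial and the verification of \eqref{eq:condition:on:zeros} goes through.
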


\begin{proof}
$(1\implies 2)$ If $\m B$ is a locally integral ipo\nbd-monoid, then for all $x,y\in B$ we have that $1_x\cdot 0_x = 0_x\le 0 \le 1 \le 1_y$, and by rotation we obtain that $0_x\cdot\nein 1_y\le \nein 1_x$, that is, $0_x0_y\le 0_x$. Analogously, we can prove that $0_x0_y\le 0_y$, and therefore $0_x0_y\le 0_x\land 0_y = 0_{xy} = 0_{xy}0_{xy}\le 0_x0_y$. That is, $0_x0_y = 0_{xy}$ is valid in $\m B$. Therefore, if $\m A$ is a subreduct of $\m B$, the equation is also valid in $\m A$.

\vspace\itemsep
\noindent
$(2\implies 3)$ This implication is trivial.

\vspace\itemsep
\noindent
$(3\implies 4)$ Since $0_{pq} = 0_p\land 0_q \le 0_p$, the equation $0_p0_q\le 0_{pq}$ implies that $0_p0_q\le 0_p$, and by rotation we have that $\no 0_p\cdot 0_p \le \no 0_q$, that is, $0_p\le 1_q$.

\vspace\itemsep
\noindent
$(4\implies 1)$ We will distinguish two cases according to whether one of the integral components of $\m A$ is trivial or not:

If one of the integral components of $\m A$ is trivial, that is, there is $p\in A^+$ such that $0_p = 1_p$, we can readily check that $p=1_p$ is the minimum of $\m A^+$, since for every $q\in A^+$ we have that $1_p = 0_p \le 1_q = q$. Thus, $1_p$ is the global identity of $\m A$ and the structure $\widehat{\m A} = (A,\le,\cdot,1_p,\nein,\no)$ is a locally integral ipo\nbd-monoid, as a consequence of Proposition~\ref{prop:char:locally:integral}.

If for every $p\in A^+$, $0_p < 1_p$, let $(\m A^+,\cl A,\Phi)$ be the semilattice directed system associated to~$\m A$. Consider the join-semilattice $\m D = (A^+\cup\{\bot\}, \lor)$ that results from adding a lower bound to $\m A^+$ and let $\m A_\bot$ be any integral ipo\nbd-monoid and $\widehat{\cl A} = \cl A\cup\{\m A_\bot\}$. Let $\varphi_{\bot\bot}\: \m A_\bot\to\m A_\bot$ be the identity homomorphism, $\varphi_{\bot p}\:\m A_\bot\to\m A_p$ the monoid homomorphism determined by $\varphi_{\bot p}(a) = 1_p$, and $\widehat\Phi = \Phi\cup\{\varphi_{\bot p} : p\in D\}$. One can check that $(\m D,\widehat{\cl A},\widehat\Phi)$ is a semilattice directed system of integral ipo\nbd-monoids, and we can form its glueing $\m B = \int_\Phi\m A_p$. We will see next that $\widehat\Phi$ satisfies all the necessary conditions for $\m B$ to be a locally integral ipo\nbd-semigroup, but first of all notice that, by construction, $\m A$ is a substructure of $\m B$.

The family $\widehat\Phi$ avoids zeros, because $\Phi$ does and, moreover, for every $p\in A^+$, $\varphi_{\bot p}(a) = 1_p\neq 0_p$. In order to show that it satisfies~\eqref{eq:Phi:satisfies:bal}, 
we only need to check that the newly added morphisms satisfy~\eqref{eq:Phi:satisfies:bal}: the identity $\varphi_{\bot\bot}$ is trivially balanced, and for every $p\in D$ and $a\in A_\bot$, we have that $\nein\varphi_{\bot p}(\no a) = \nein 1_p = 0_p = \no 1_p = \no\varphi_{\bot p}(\nein a)$. It can be easily shown that for all $x\in A_\bot$ and $y\in A_p$, with $p\neq \bot$, we have that $x\le y \iff y = 1_p$ and $y\le x \iff y=0_p$.

Now, one can readily see that all the maps of $\widehat\Phi$ are monotone. As for~(lax), we have four cases: when $q\neq p=\bot \neq r$, and then $\nein\varphi_{\bot q}(a) = \nein 1_q = 0_q \le 1_r = \varphi_{\bot r}(\nein a)$;\footnote{~This is the only point in the proof where we use condition~(4).} the case $p=\bot=q\neq r$, in which $\nein\varphi_{\bot\bot}(a) = \nein a \le 1_r = \varphi_{\bot r}(\nein a)$; the case $p= \bot = r \neq q$, in which $\nein\varphi_{\bot q}(a) = \nein 1_q = 0_q \le \nein a = \varphi_{\bot\bot}(\nein a)$; and the case $p=q=r=\bot$, where $\nein\varphi_{\bot\bot}(a) = \nein a = \varphi_{\bot\bot}(\nein a)$.

We have just proved that $\m A$ is a substructure of $\m B$, which is also a locally integral ipo\nbd-semigroup. Moreover, since $\bot$ is the lower bound of $\m D$, then $1_\bot$ is the global identity of $\m B$. Hence, adding the constant $1_\bot$ to the signature of $\m B$ we obtain the locally integral ipo\nbd-monoid $\widehat{\m B}$, of which $\m A$ is a subreduct.

As for the final statement, we only need to choose $\m A_\bot$ to be an integral unital semiring. In order to check that $\m B$ contains all joins and meets, we only need to consider the case in which we have $a\in A_\bot$ and $b\in A_p$, with $p\in A^+$. We would have two different situations: if $b=0_p$ then $b\le 0_\bot\le a$, and therefore $a\lor b = a$. Otherwise, if $b\neq 0_p$ then $b\nle x$ for any $x\in A_\bot$, and $a\le y$, with $y\notin A_\bot$ only if $y\in A^+$. And since the least element of $A^+$ that is larger than $b$ is $1_p$, we have that $a\lor b = 1_p$. The case of the meets is analogous: $a\land 1_p = a$ and $a\land b = 0_p$, if $b\neq 1_p$.
\end{proof}

\begin{figure}
\centering\scriptsize
\begin{tikzpicture}[baseline=0pt]
\node at (0,-1.5)[n]{$\m 1$};
\node(0) at (0,0)[i,label=left:{$0=1$}]{};
\end{tikzpicture}
\ 
\begin{tikzpicture}[baseline=0pt]
\node at (0,-1.5)[n]{$\m 2$};
\node(1) at (0,1)[i,label=left:$1$]{};
\node(0) at (0,0)[i,label=left:$0$]{} edge (1);
\end{tikzpicture}
\ 
\noindent
\begin{tikzpicture}[baseline=0pt]
\node at (0,-1.5)[n]{$\m L_{3}$};
\node(2) at (0,2)[i,label=left:$1$]{};
\node(1) at (0,1)[label=right:$a$]{} edge (2);
\node(0) at (0,0)[i,label=right:$a^2$,label=left:$0$]{} edge (1);
\end{tikzpicture}
\ 
\begin{tikzpicture}[baseline=0pt]
\node at (0,-1.5)[n]{$\m 2^2$};
\node(3) at (0,2)[i,label=left:$1$]{};
\node(2) at (1,1)[i,label=left:$a$]{} edge (3);
\node(1) at (-1,1)[i,label=right:$-a$]{} edge (3);
\node(0) at (0,0)[i,label=left:$0$,label=right:$-a{\cdot}a$]{} edge (1) edge (2);
\end{tikzpicture}
\ 
\begin{tikzpicture}[baseline=0pt]
\node at (0,-1.5)[n]{$\m L_{4}$};
\node(3) at (0,3)[i,label=left:$1$]{};
\node(2) at (0,2)[label=right:$a$]{} edge (3);
\node(1) at (0,1)[label=right:$a^2$]{} edge (2);
\node(0) at (0,0)[i,label=right:$a^3$,label=left:$0$]{} edge (1);
\end{tikzpicture}
\ 
\begin{tikzpicture}[baseline=0pt]
\node at (0,-1.5)[n]{$\m I_{4,1}$};
\node(3) at (0,3)[i,label=left:$1$]{};
\node(2) at (0,2)[i,label=right:$a$]{} edge (3);
\node(1) at (0,1)[label=right:$b$]{} edge (2);
\node(0) at (0,0)[i,label=left:$0$,label=right:$b^2$]{} edge (1);
\end{tikzpicture}
\ 
\begin{tikzpicture}[baseline=0pt]
\node at (0,-1.5)[n]{$\m L_{5}$};
\node(4) at (0,4)[i,label=left:$1$]{};
\node(3) at (0,3)[label=right:$a$]{} edge (4);
\node(2) at (0,2)[label=right:$a^2$]{} edge (3);
\node(1) at (0,1)[label=right:$a^3$]{} edge (2);
\node(0) at (0,0)[i,label=left:$0$,label=right:$a^4$]{} edge (1);
\end{tikzpicture}
\ 
\begin{tikzpicture}[baseline=0pt]
\node at (0,-1.5)[n]{$\m I_{5,1}$};
\node(4) at (0,4)[i,label=left:$1$]{};
\node(3) at (0,3)[label=right:$a$]{} edge (4);
\node(2) at (0,2)[label=right:$b$]{} edge (3);
\node(1) at (0,1)[label=right:$a^2$][label=left:$ab$]{} edge (2);
\node(0) at (0,0)[i,label=left:$0$][label=right:$a^3$]{} edge (1);
\end{tikzpicture}
\ 
\begin{tikzpicture}[baseline=0pt]
\node at (0,-1.5)[n]{$\m I_{5,2}$};
\node(4) at (0,4)[i,label=left:$1$]{};
\node(3) at (0,3)[i,label=right:$a$][label=left:$ab$]{} edge (4);
\node(2) at (0,2)[label=right:$b$]{} edge (3);
\node(1) at (0,1)[label=right:$c$]{} edge (2);
\node(0) at (0,0)[i,label=left:$0$,label=right:$b^2$]{} edge (1);
\end{tikzpicture}
\ 
\begin{tikzpicture}[baseline=0pt]
\node at (0,-1.5)[n]{$\m P_{6,1}$};
\node(5) at (0,3)[i,label=left:$1$]{};
\node(4) at (.7,2)[i,label=right:$-{-}a$][label=left:$b$]{} edge (5);
\node(3) at (-.7,2)[i,label=left:$-{-}b$][label=right:$a$]{} edge (5);
\node(2) at (.7,1)[label=right:$-a$]{} edge (3) edge (4);
\node(1) at (-.7,1)[label=left:$-b$]{} edge (3) edge (4);
\node(0) at (0,0)[i,label=left:$0$]{} edge (1) edge (2);
\end{tikzpicture}
\caption{All integral involutive semirings up to size 5 and an integral ipo\nbd-monoid of size 6, as components for constructing locally integral idempotent semirings and ipo\nbd-semigroups.}
\label{fig:integral:ipomonoids:up:to:size:6}
\end{figure}
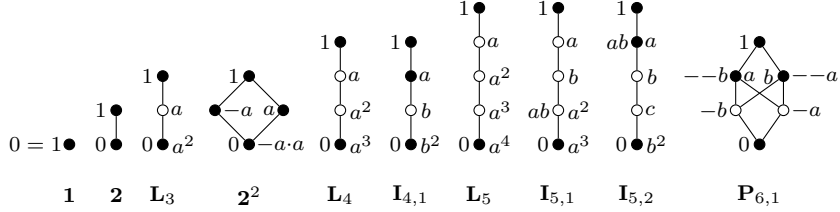

\section{Idempotent locally integral ipo-semigroups}

In this section we specialize to locally integral ipo\nbd-semigroups in which all elements are idempotent, i.e., the identity $x^2=x$ holds. The following two results describe this class of structures.

\begin{theorem}\label{thm:loc:integ:idempotent}
A locally integral ipo\nbd-semigroup is idempotent if and only if all its integral components are Boolean algebras.
\end{theorem}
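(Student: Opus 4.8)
The statement is a biconditional, and the easy direction is that Boolean components force idempotence, so I would dispatch it first. Every element $x\in A$ lies in the component $\m A_{1_x}$, whose multiplication is the restriction of that of $\m A$ by Proposition~\ref{prop:integral:components}, and $x\cdot x$ stays in $A_{1_x}$ by Lemma~\ref{lem:prod:in:Aprod}. If that component is a Boolean algebra, then its monoid operation is the (idempotent) meet, so $x\cdot x=x$ holds in the component and hence in $\m A$. Thus $\m A$ is idempotent.

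For the converse, idempotence is inherited by each component, so it suffices to prove the self-contained claim that every idempotent integral involutive ipo\nbd-monoid $\m B$, with top and identity $1$ and bottom $0=\nein 1=\no 1$, is a Boolean algebra; applying this to each $\m A_p$ finishes the theorem. The crux is to recover the lattice order from the monoid structure alone. Integrality gives $xy\le x$ and $xy\le y$, so $xy$ is a lower bound of $\{x,y\}$; and if $z\le x$ and $z\le y$, then $z=zz\le xy$ by idempotence and monotonicity. Hence $xy=x\wedge y$ is the greatest lower bound, the same computation gives $yx=x\wedge y$, so $\m B$ is automatically commutative and $(B,\le)$ is a meet-semilattice with $\wedge={\cdot}$.

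I would then promote this to a bounded lattice and verify the Boolean axioms. Since $\nein$ is an order-reversing bijection with inverse $\no$, it carries the existing meets to suprema, so joins exist and $x\vee y=\no(\nein x\cdot\nein y)=x+y$; together with top $1$ and bottom $0$ this makes $\m B$ a bounded lattice. Complementation is then immediate from the involution: $x\wedge\nein x=x\cdot\nein x=0_x=0$ and $x\vee\nein x=x+\nein x=\nein(\no x\cdot x)=\nein 0_x=\nein 0=1$, using $\no x\cdot x=x\cdot\nein x=0_x$ from Lemma~\ref{lem:char:local:bounds}. For distributivity I would invoke residuation inside $\m B$ itself: as an ipo\nbd-monoid its multiplication preserves existing joins, and since $\cdot=\wedge$ this reads $x\wedge(y\vee z)=(x\wedge y)\vee(x\wedge z)$. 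A complemented distributive lattice is a Boolean algebra, completing the argument.

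The main obstacle is the very first move in the converse: a priori an integral ipo\nbd-monoid is only a poset and need not be commutative or lattice-ordered, so it is not obvious that $\m B$ carries any lattice structure at all. The identity $z=zz\le xy$ is what breaks this open, simultaneously producing the meet and forcing commutativity; everything afterwards is routine De~Morgan and involution bookkeeping. One point to keep honest is that the joins used for distributivity are the joins of $(B,\le)$ itself, preserved by $\cdot$ precisely because $\m B$ is residuated, so no appeal to joins in the ambient $\m A$ is needed.
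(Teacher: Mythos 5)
Your proposal is correct and follows essentially the same route as the paper: both directions reduce to the components, and the key step in the converse is exactly the paper's computation $xy\le x,y$ together with $z=zz\le xy$, showing $x\cdot y=x\wedge y$. You simply spell out the remaining bookkeeping (joins via the involution, complements, distributivity from residuation) that the paper leaves implicit, and all of it checks out.
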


\begin{proof}
A locally integral ipo\nbd-semigroup is idempotent if and only if all its integral components are idempotent. And an integral ipo\nbd-monoid is idempotent if and only if it is a Boolean algebra, because if $\m A$ is idempotent then for all $x,y\in A$, $x\cdot y = x\land y$. Indeed, $x\cdot y \le 1\cdot y = y$ and analogously $x\cdot y \le x$. And if $z\le x$ and $z\le y$, then $z = z\cdot z \le x\cdot y$.
\end{proof}

\begin{corollary}\label{cor:idempotent:commutative}
For any idempotent ipo\nbd-semigroup $\m A$, the following conditions are equivalent:
\begin{enumerate}
    \item $\m A$ satisfies $\no x\cdot x = x\cdot\nein x$,
    \item $\m A$ is commutative,
    \item $\m A$ has local identities,
    \item $\m A$ is cyclic,
    \item $\m A$ is locally integral and all its components are Boolean algebras.
\end{enumerate}
\end{corollary}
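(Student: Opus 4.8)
The plan is to treat idempotence as a standing hypothesis and to prove the five conditions equivalent through the single cycle $(1)\to(3)\to(5)\to(2)\to(4)\to(1)$, using the decomposition machinery to supply the ``output'' implications and reserving the genuinely computational work for two steps. The cheap arrows come first. By Lemma~\ref{lem:char:local:bounds}, condition~(1) is literally the equation $x\backslash x = x/x$; and under idempotence $xx=x$ gives $x\le x/x=1_x$ by residuation, whence $x=xx\le 1_x\cdot x\le x$, so $1_x\cdot x=x$ and $\m A$ has local identities. This settles $(1)\iff(3)$. For $(5)\to(2)$ I would invoke Theorem~\ref{thm:loc:integ:idempotent}: under the standing idempotence, (5) says exactly that $\m A$ is locally integral with all integral components Boolean algebras, and Boolean algebras are commutative; since commutativity is a \emph{local} property by Theorem~\ref{thm:decomposition:ipo-monoids}, $\m A$ is commutative. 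Then $(2)\to(4)$ is immediate: a commutative ipo\nbd-semigroup satisfies $x\backslash y=y/x$, hence $x\backslash x=x/x$, so it has local identities, and Remark~\ref{rem:loc:ident:commutat:implies:cyclic} supplies cyclicity.

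The engine is $(3)\to(5)$: that idempotence upgrades mere local identities to full local integrality. I would check the four conditions of Proposition~\ref{prop:char:locally:integral}. Idempotence gives the square\nbd-decreasing law $xx\le x$ and the idempotence of $0_x$ for free, while having local identities is condition~(1) there; so everything reduces to showing that $x/x=1_x$ is \emph{positive}, that is, $y\le 1_x y$ for all $y$. The clean reduction is this: since $1_y\cdot y=y$, we have $1_x y=(1_x 1_y)\,y$, so by monotonicity of the product it suffices to prove the sub\nbd-lemma that a product of local identities dominates each factor, $1_x 1_y\ge 1_y$ (and symmetrically $\ge 1_x$); then $1_x y=(1_x 1_y)\,y\ge 1_y\cdot y=y$. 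Once positivity is established, $\m A$ is locally integral, Theorem~\ref{thm:loc:integ:idempotent} makes its components Boolean, and (5) follows; this also yields $(5)\to(1)$ directly via Proposition~\ref{prop:char:locally:integral}.

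To close the cycle I still need $(4)\to(1)$. Here cyclicity $\nein=\no$ collapses the residuals to sums, $x\backslash x=x+\nein x$ and $x/x=\nein x+x$ (from $z/y=\no(y\cdot\nein z)=\nein y+z$ and the dual), so condition~(1), namely $x\backslash x=x/x$, is exactly the commutation of $+$ on a complementary pair $x,\nein x$; I would derive it from the idempotence of $\cdot$ (equivalently of $+$) together with the De~Morgan duality and residuation linking the two operations. Both remaining steps---the sub\nbd-lemma $1_x 1_y\ge 1_y$ and this commutation---are manifestations of one phenomenon: in the developed theory the positive elements are central and $\{1_x\}$ is a join\nbd-semilattice (Propositions~\ref{prop:positive:are:central} and~\ref{prop:A+:join:semilattice}), but those facts were proved \emph{assuming} local integrality, so here they must be bootstrapped directly from idempotence using only the elementary relations $0_x\le x\le 1_x$, the idempotence of $0_x$ and $1_x$, and $z\cdot 0_z=0_z$.

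I expect this bootstrapping to be the main obstacle, and most sharply the step $1_x 1_y\ge 1_y$. The natural reformulation (applying $\nein$) is $1_y\backslash 0_x\le 0_y$, i.e.\ every $u$ with $1_y u\le 0_x$ satisfies $u\le 0_y$; the temptation is to argue by monotonicity of $x\mapsto 1_x$, but that map genuinely fails to be monotone (already in Example~\ref{ex:loc:integ:no:global:unit}, $\bot\le p$ while $1_\bot=\top\nleqslant p=1_p$), so the inequality must be obtained by a direct product\nbd-and\nbd-negation computation inside the interval $[0_{\,\cdot},1_{\,\cdot}]$. The value of the reduction above is precisely that it isolates the whole difficulty into this one self\nbd-contained inequality.
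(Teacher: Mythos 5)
Your overall architecture is reasonable and the cheap arrows ($1\Leftrightarrow 3$ via residuation and idempotence, $5\Rightarrow 2$ via locality of commutativity, $2\Rightarrow 4$ via Remark~\ref{rem:loc:ident:commutat:implies:cyclic}, and the reduction of $3\Rightarrow 5$ to the positivity condition of Proposition~\ref{prop:char:locally:integral}) agree with the paper. But the proposal has a genuine gap, and you flag it yourself without closing it: the positivity of the local identities is never proved. Your reduction to the ``sub-lemma'' $1_y\le 1_x1_y$ is not real progress, since that inequality is just the instance $y:=1_y$ of the statement $y\le 1_xy$ being proved, so the two are equivalent in strength; the ``direct product-and-negation computation'' you defer is precisely the hard content of the corollary. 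The paper supplies it as follows: it first derives commutativity from condition~(1) together with idempotence (from $(yx)x\le yx$ and $(yx)\backslash(yx)=(yx)/(yx)$ one gets $xyx\le yx$ and $yxy\le yx$, whence $xy=x(yxy)\le xyx\le yx$), and only then proves $y\le 1_xy$ by the chain $y\le 1_xy/0_xy=(1_xy/y)/0_x=\no 0_x/\no(1_xy/y)=1_x/(y\cdot\no(1_xy))\le(1_xy\cdot\no(1_xy))/(y\cdot\no(1_xy))=\dots=(1_xy)/y$, using contraposition, $u/vw=(u/w)/v$, and $u/v\le uv/v$. Note that this computation relies on commutativity, which in your cycle only becomes available \emph{after}~(5); so even the ordering of your implications would need to change (establish~(2) from~(3) first, as the paper does, before attempting positivity). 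Your observation that $x\mapsto 1_x$ is not monotone is correct and shows why the naive route fails, but it does not substitute for an argument.

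The step $(4)\Rightarrow(1)$ is likewise only a declaration of intent (``I would derive it from\dots''). The paper's argument is short and you should supply something like it: by idempotence and cyclicity, $x\le x/x=\no(x\cdot\no x)$ and $x\le x\backslash x=\no(\no x\cdot x)$, which yield $x\cdot\no x\le\no x$, hence $\no x\cdot x\le x$ and $\no x\cdot x\le\no x$; then $\no x\cdot x=(\no x\cdot x)(\no x\cdot x)\le x\cdot\no x$, and substituting $\nein x$ for $x$ gives the reverse inequality. Without this computation and the positivity argument above, your cycle does not close.
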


\begin{proof}
$(1 \implies 2)$ If $\m A$ satisfies $\no x\cdot x = x\cdot\nein x$, then it also satisfies $x\backslash x = x/x$, by Lemma~\ref{lem:char:local:bounds}. The inequality $yxx\le yx$ implies that $x\le yx\backslash yx = yx/yx$, and therefore $xyx\le yx$. Analogously, the inequality $yyx\le yx$ implies that $yxy\le yx$. Hence, $xy = x(yxy) \le xyx \le yx$, and thus $xy = yx$.

\vspace\itemsep
\noindent
$(2 \implies 3)$ If $\m A$ has local identities, in particular $x\backslash x = x/x$. Notice that the equation $(x/x)x=x$ holds in every idempotent ipo\nbd-semigroup, since $xx = x$ implies $x\le x/x$ and therefore $x = xx \le (x/x)x \le x$. Hence, $\m A$ has local identities.

\vspace\itemsep
\noindent
$(3 \implies 4)$ If $\m A$ has local identities, in particular it satisfies $\no x\cdot x = x\cdot\nein x$, by Lemma~\ref{lem:char:local:bounds}. By the previous argument, $\m A$ is commutative and therefore cyclic by Remark~\ref{rem:loc:ident:commutat:implies:cyclic}.

\vspace\itemsep
\noindent
$(4\implies 1)$ Since $x\le x/x = \no(x\cdot\no x)$, we get $x\cdot\no x\le \no x$ and hence $\no x\cdot x\le x$, by double negation. Analogously, since $x\le x\backslash x = \no(\no x\cdot x)$, we get $\no x\cdot x\le \no x$. Using these inequalities, we obtain $\no x \cdot x = \no x \cdot x\cdot \no x \cdot x \le x\cdot \no x$. That is, $\no x\cdot x\le x\cdot \no x$, and by double negation, $x\cdot\no x\le \no x\cdot x$, whence we derive the equality.

\vspace\itemsep
The fact that condition~(5) implies each one of the other conditions is trivial. Suppose then that $\m A$ satisfies condition~(1), and then it has local identities and it is commutative and cyclic. Notice also that $\m A$ satisfies conditions~(1), (3), and~(4) of Proposition~\ref{prop:char:locally:integral}. We will show next that all local identities are positive, what proves that $\m A$ is a commutative locally integral ipo\nbd-monoid, and by Theorem~\ref{thm:loc:integ:idempotent} all its components are Boolean algebras.

First, it is easy to check that, by residuation and idempotency, we have that $u/vw = (u/w)/v$ and $u/v \le uv/v$. Now, the inequality $0_x\le 1_x$ implies that $0_xyy = 0_xy \le 1_x y$, and by residuation, commutativity, contraposition, and the aforementioned properties, we have that
\begin{align*}
y &\le 1_xy/0_xy = (1_xy/y)/0_x = \no 0_x/\no (1_xy/y) = 1_x/(y\cdot\no(1_xy)) \\
&\le (1_xy\cdot\no(1_xy))/(y\cdot\no(1_xy)) = 0_{\no(1_xy)}/(y\cdot\no(1_xy)) = \big(0_{\no(1_xy)}/\no(1_xy)\big)/y \\
&= \no\no(1_xy)/y = (1_xy)/y,
\end{align*}
and by residuation and idempotency, $y = yy \le 1_xy$, as we wanted to prove.
\end{proof}

Since a commutative and idempotent semigroup is a semilattice, we will refer to these structures as locally integral \emph{ipo\nbd-semilattices}, or \emph{i$\ell$-semilattices} in case they are lattice-ordered. This semilattice order, called the \emph{multiplicative order}, is denoted by $\sqsubseteq$ and it is defined by $x\sqsubseteq y\iff xy=x$.

The smallest locally integral i$\ell$-semilattice that doesn't have a global unit is the $4$-element algebra described in Example~\ref{ex:loc:integ:no:global:unit}. And the smallest locally integral ipo\nbd-semilattice that is not lattice-ordered has $8$ elements. Their posets and multiplicative orders are shown in Figure~\ref{fig:cidil:cidipo}.

\begin{figure}[ht!]
\centering
\begin{tikzpicture}[scale=1, baseline=0pt]
\node(3) at (0,2)[label=above:$\top$]{};
\node(2) at (1,1)[label=right:$q$]{};
\node(1) at (-1,1)[label=left:$p$]{};
\node(0) at (0,0)[label=below:$\bot$]{};
\draw(0)--(1)--(3)--(2)--(0);
\node at (0,-1.5)[n]{$\le$};
\end{tikzpicture}
\qquad
\begin{tikzpicture}[scale=1, baseline=0pt]
\node(3) at (1,2)[label=right:$q$]{};
\node(2) at (-1,2)[label=left:$p$]{};
\node(1) at (0,1)[label=right:$\top$]{};
\node(0) at (0,0)[label=below:$\bot$]{};
\draw[ultra thick](0)--(1);
\draw(1)--(2)(1)--(3);
\node at (0,-1.5)[n]{$\sqsubseteq$};
\end{tikzpicture}
\qquad\qquad\qquad
\begin{tikzpicture}[scale=1, baseline=0pt]
\node(3) at (0,4)[label=above:$\top$]{};
\node(2) at (2,2){};
\node(1) at (-2,2){};
\node(0) at (0,0)[label=below:$\bot$]{};
\draw(0)--(1)--(3)--(2)--(0);
\node(7) at (-1,3){};
\node(6) at (1,1){};
\node(5) at (1,3){};
\node(4) at (-1,1){};
\draw(4)--(5)(6)--(7);
\node at (0,-1.5)[n]{$\le$};
\end{tikzpicture}
\qquad
\begin{tikzpicture}[scale=1, baseline=0pt]
\node(7) at (1,3){};
\node(6) at (2,2){};
\node(5) at (-1,3){};
\node(4) at (-2,2){};
\node(3) at (0,2)[label=above:$\top$]{};
\node(2) at (1,1){};
\node(1) at (-1,1){};
\node(0) at (0,0)[label=below:$\bot$]{};
\draw[ultra thick](0)--(1)--(3)--(2)--(0);
\draw[ultra thick](4)--(5)(6)--(7);
\draw(1)--(4)(3)--(5)(2)--(6)(3)--(7);
\node at (0,-1.5)[n]{$\sqsubseteq$};
\end{tikzpicture}
\caption{The smallest i$\ell$-semilattice that does not have a global identity and the smallest ipo\nbd-semilattice that is not lattice-ordered.}
\label{fig:cidil:cidipo}
\end{figure}
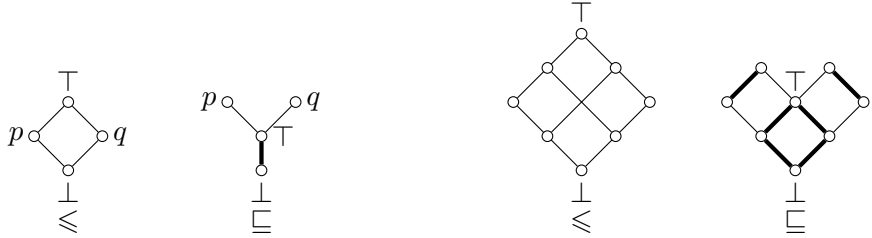

The number of locally integral ipo\nbd-semilattices with $n\le 16$ elements (up to isomorphism) have been calculated with Mace4~\cite{McC2010} and are given in Table~\ref{tab:iposemilattices:size-n}. Apart from the 16-element Boolean algebra, all these po-algebras can be constructed from glueings of Boolean algebras of cardinality $\le 8$.

\begin{table}
\centering\small\tabcolsep3pt
\begin{tabular}{l|cccccccccccccccc}
Number of elements $=$   &1&2&3&4&5& 6& 7& 8& 9&10&11&12&13&14&15&16\\\hline
ipo\nbd-semilattices      &1&1&1&3&4&10&17&43&82&201&&&&&&\\
i$\ell$-semilattices  &1&1&1&3&4&10&17&42&80&191&&&&&&\\
comm. idempotent ipo\nbd-monoids &1&1&1&2&2& 4& 4& 9&10&22&24&53&61&134&157&343\\
comm. idempotent i$\ell$-monoids &1&1&1&2&2& 4& 4& 9&10&21&22&49&52&114&121&270\\
Boolean algebras &1&1&0&1&0&0&0&1&0&0&0&0&0&0&0&1\\
\end{tabular}
\caption{Number of ipo\nbd-semilattices up to isomorphism. Boolean algebras are the building blocks for ipo\nbd-semilattices.}
\label{tab:iposemilattices:size-n}
\end{table}

To end this section, we will describe a duality for a certain class of locally integral ipo\nbd-semilattices including all finite ones. This dual representation gives a much more compact way of drawing finite ipo\nbd-semilattices (see Figures~\ref{fig:m14:4} and~\ref{fig:m14:8}). We first recall that generalized Boolean algebras form the variety \textbf{GBA} generated by the 2-element residuated lattice $\m 2_r$. An equational basis for this variety is given by the identities for residuated lattices together with $xy=x\land y$ and $(y\backslash x)/ y=x\lor y$ (see~\cite[Prop. 3.23]{GaJiKoHi07}). Actually, GBAs can be characterized as the residuated lattices for which every principal filter is a Boolean lattice (see~\cite{GaJiKoHi07}).

Finite members of \textbf{GBA} are reducts of Boolean algebras without complementation or a constant designating the bottom element. Homomorphisms between GBAs are residuated lattice homomorphisms, hence they preserve the top element $1$, join, meet, and residuals, but not necessarily complementation or $0$. A generalized Boolean algebra is \emph{complete} if all joins (and therefore all meets) exist, and it is \emph{atomic} if every non-bottom element is above some atom, or equivalently, if every element is the join of all the atoms below it.

Consider the category $\CAGBA$ whose objects are the complete and atomic generalized Boolean algebras and whose morphisms are the sup-preserving homomorphisms of generalized Boolean algebras. Consider also the category $\Par$ of sets and partial maps. The well-known Tarski duality between the category of complete and atomic Boolean algebras and the category of sets and maps can be extended to a duality between $\CAGBA$ and $\Par$ in the following way.

Consider the functor $\Psi\:\CAGBA\to\Par^\op$ defined as follows: for every $\m B$ in $\CAGBA$, $\Psi(\m B)=\At(\m B)$, the set of atoms of $\m B$, and for every morphism $\theta\:\m B\to \m B'$, $\Psi(\theta)\:\At(\m B')\parmap\At(\m B)$ is the partial map defined on $\{b\in\At(\m B') : b\nle\theta(0_\m B)\}$ as $\Psi(\theta)(b)$ being the only $a\in\At(\m B)$ such that $b\le\theta(a)$, and remaining undefined otherwise. Notice that this is a well-defined partial map, because if $b\in\At(\m B')$ and $a,a'\in\At(\m B)$ are such that $b\le \theta(a)$ and $b\le\theta(a')$, then $b\le \theta(a)\land \theta(a') =\theta(a\land a') =\theta(0_\m B)$. In the other direction, consider the functor $\P\:\Par^\op\to\CAGBA$ defined for every $X$ as $\P(X)$, the powerset generalized Boolean algebra, and for every partial map $f:X\parmap Y$, $\P(f)\:\P(Y)\to\P(X)$ is the \emph{partial inverse image} defined as $\P(f)(a) = U_f\cup\{x\in X : f(x)\in a\}$, where $U_f = \{x\in X : f(x)\text{ is undefined}\}$. These two functors establish the aforementioned categorical equivalence between $\CAGBA$ and $\Par^\op$.

Now, a complete and atomic generalized Boolean algebra can be understood as an integral ipo-monoid, and every sup-preserving homomorphism between complete and atomic generalized Boolean algebras is in particular a monoid homomorphism. Thus, we can view $\CAGBA$ as a subcategory of $\IIPOMn$. We would like to use the duality between $\CAGBA$ and $\Par$ and the equivalence of categories described in the previous section in order to give a duality between the locally integral ipo\nbd-semilattices whose components are complete and atomic and a certain category of semilattice directed systems of sets and partial maps.

The functor categories $\CAGBA^\SD$ and $(\Par^\op)^\SD$ are equivalent, since $\CAGBA$ and $\Par^\op$ are equivalent. That said, not every semilattice directed system of complete and atomic generalized Boolean algebras will give rise via the glueing construction to a locally integral ipo\nbd-semigroup, but only those satisfying~\eqref{eq:condition:on:zeros}, \eqref{eq:Phi:satisfies:bal}, (mon), and~(lax). Notice that, by assumption, all the considered homomorphisms are sup-preserving, and therefore monotone. Also, since these structures are commutative, condition~\eqref{eq:Phi:satisfies:bal} is trivially satisfied. Consider then the full subcategory $\SDCAGBA$ of $\CAGBA^\SD$ of those functors satisfying~\eqref{eq:condition:on:zeros} and~(lax). Using the equivalence of the previous section, this category $\SDCAGBA$ is equivalent to the category of locally integral ipo\nbd-semilattices which are locally complete and atomic. And by the duality just described, $\SDCAGBA$ is equivalent to a full subcategory $\SDPar$ of $(\Par^\op)^\SD$.

We note that conditions on semilattice directed systems of $\CAGBA$ 
translate into conditions on the semilattice directed systems of partial maps.
For instance, 
the condition~\eqref{eq:condition:on:zeros} corresponds to the following condition on a functor $F\:\ct D\to\Par^\op$ in $\SDPar$: for all $p\le q$ in $\ct D$,
\[
f_{pq}\: F(q)\parmap F(p) \text{ is total}\quad\iff\quad p=q.
\]

\begin{figure}[!ht]
\centering
\begin{tikzpicture}
\node(13) at (-1.5,1.5){};
\node(12) at (-1,1){};
\node(11) at (-3,0){};
\node(10) at (-2.5,-0.5){};
\node(9) at (-0.5, 0.5){};
\node(8) at (-2, -1){};
\draw[ultra thick] (13)--(11);
\draw[ultra thick] (8)--(10)--(12)--(9);
\draw(10)--(11)(12)--(13);
\draw[ultra thick] (8)--(9);
\node(7) at (0,0){};
\node(6) at (0,-1){};
\node(5) at (-1,-1){};
\node(4) at (-1,-2){};
\node(3) at (1,-1){};
\node(2) at (1,-2){};
\node(1) at (0,-2){};
\node(0) at (0,-3){};
\draw[ultra thick](4)--(5)--(7)--(6)--(4)
(0)--(1)--(3)--(2)--(0)
(0)--(4)(1)--(5)(2)--(6)(3)--(7);
\draw(9)--(7) (8)--(4);
\end{tikzpicture}
\qquad
\begin{tikzpicture}
\node at (0,5)[n]{};
\node(6) at (0,4){};
\node(5) at (.5,2.5){};
\node(4) at (-.5,2.5){};
\node(3) at (1,1){};
\node(2) at (-1,1){};
\node(1) at (0,1){};
\draw(4) edge (6);
\draw(2) edge (4);
\draw(1) edge (4);
\draw (0,2.5) ellipse [x radius=25pt, y radius=13pt];
\draw (0,1) ellipse [x radius=40pt, y radius=13pt];
\end{tikzpicture}
\qquad\qquad\qquad
\begin{tikzpicture}
\node(13) at (-0.5,1.5){};
\node(12) at (0,1){};
\node(11) at (-1,1){};
\node(10) at (-2.5,-0.5){};
\draw[ultra thick] (13)--(12);
\draw[ultra thick] (8)--(10)--(11)--(9);
\draw(9)--(12)(11)--(13);
\node(9) at (-0.5, 0.5){};
\node(8) at (-2, -1){};
\draw[ultra thick] (8)--(9);
\node(7) at (0,0){};
\node(6) at (0,-1){};
\node(5) at (-1,-1){};
\node(4) at (-1,-2){};
\node(3) at (1,-1){};
\node(2) at (1,-2){};
\node(1) at (0,-2){};
\node(0) at (0,-3){};
\draw[ultra thick](4)--(5)--(7)--(6)--(4)
(0)--(1)--(3)--(2)--(0)
(0)--(4)(1)--(5)(2)--(6)(3)--(7);
\draw(9)--(7) (8)--(4);
\end{tikzpicture}
\qquad
\begin{tikzpicture}
\node(6) at (0,4){};
\node(5) at (.5,2.5){};
\node(4) at (-.5,2.5){};
\node(3) at (1,1){};
\node(2) at (-1,1){};
\node(1) at (0,1){};
\draw(5) edge (6);
\draw(2) edge (4);
\draw(1) edge (4);
\draw (0,2.5) ellipse [x radius=25pt, y radius=13pt];
\draw (0,1) ellipse [x radius=40pt, y radius=13pt];
\end{tikzpicture}
\caption{Multiplicative orders and dual representations for two different ipo\nbd-monoids obtained from glueing the same Boolean algebras over a 3-element semilattice chain.}
\label{fig:m14:4}
\end{figure}
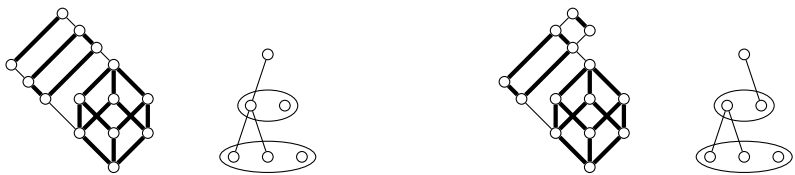

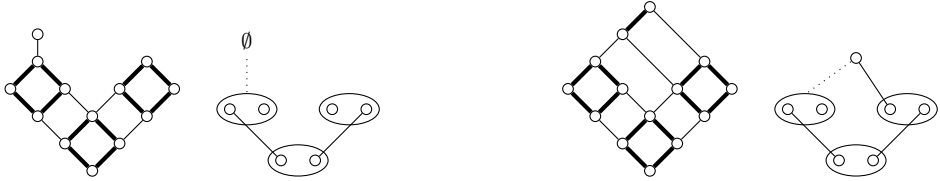
\begin{figure}[!ht]
\centering
\begin{tikzpicture}[scale=.8]
\node(12) at (-2,2){};
\node(11) at (2,1){};
\node(10) at (1, 0){};
\node(9) at (3, 0){};
\node(8) at (2,-1){};
\node(7) at (-2,1){};
\node(6) at (-3, 0){};
\node(5) at (-1, 0){};
\node(4) at (-2,-1){};
\node(3) at (0,-1){};
\node(2) at (-1, -2){};
\node(1) at (1, -2){};
\node(0) at (0,-3){};
\draw[ultra thick] (2)--(3)--(1)--(0)--(2)(6)--(7)--(5)--(4)--(6)(10)--(11)--(9)--(8)--(10);
\draw(1)--(8)(3)--(10)(2)--(4)(3)--(5)(12)--(7);
\end{tikzpicture}
\quad
\begin{tikzpicture}
\node(7) at (-1,4.5)[n]{\scriptsize$\emptyset$};
\node(6) at (-1.5,2.5){};
\node(5) at (-0.5,2.5){};
\node(4) at (2.5,2.5){};
\node(3) at (1.5,2.5){};
\node(2) at (1,1){};
\node(1) at (0,1){};
\draw[dotted](-1,3) edge (7);
\draw(2) edge (4);
\draw(1) edge (6);
\draw (0.5,1) ellipse [x radius=25pt, y radius=13pt];
\draw (2,2.5) ellipse [x radius=25pt, y radius=13pt];
\draw (-1,2.5) ellipse [x radius=25pt, y radius=13pt];
\end{tikzpicture}
\qquad\qquad\qquad
\begin{tikzpicture}[scale=.8]
\node(13) at (0, 3){};
\node(12) at (-1, 2){};
\node(11) at (2,1){};
\node(10) at (1, 0){};
\node(9) at (3, 0){};
\node(8) at (2,-1){};
\node(7) at (-2,1){};
\node(6) at (-3, 0){};
\node(5) at (-1, 0){};
\node(4) at (-2,-1){};
\node(3) at (0,-1){};
\node(2) at (-1, -2){};
\node(1) at (1, -2){};
\node(0) at (0,-3){};
\draw[ultra thick] (2)--(3)--(1)--(0)--(2)(6)--(7)--(5)--(4)--(6)(10)--(11)--(9)--(8)--(10)(12)--(13);
\draw(1)--(8)(3)--(10)(2)--(4)(3)--(5)(12)--(10)(12)--(7)(13)--(11);
\end{tikzpicture}
\quad
\begin{tikzpicture}
\node(7) at (.5,4){};
\node(6) at (-1.5,2.5){};
\node(5) at (-0.5,2.5){};
\node(4) at (2.5,2.5){};
\node(3) at (1.5,2.5){};
\node(2) at (1,1){};
\node(1) at (0,1){};
\draw[dotted](-.9,3) edge (7);
\draw(3) edge (7);
\draw(2) edge (4);
\draw(1) edge (6);
\draw (0.5,1) ellipse [x radius=25pt, y radius=13pt];
\draw (2,2.5) ellipse [x radius=25pt, y radius=13pt];
\draw (-1,2.5) ellipse [x radius=25pt, y radius=13pt];
\end{tikzpicture}
\caption{An example of an ipo\nbd-semilattice and an ipo\nbd-monoid, together with their dual representations.}
\label{fig:m14:8}
\end{figure}

Figures~\ref{fig:m14:4} and~\ref{fig:m14:8} contain several examples of locally integral ipo\nbd-semilattices. If $\m A$ is finite, then in particular $\m A^+$ and $\m A_p$ are finite, for all $p\in A^+$. Hence, its dual can be displayed as the Hasse diagram of $(\m A^+)^\op$ where every node $p$ of $\m A^+$ is replaced by the set $\At(\m A_p)$. If $q$ is a cover of $p$, the partial map $f_{pq}$ is given by edges from elements of $\At(\m A_q)$ to elements of $\At(\m A_p)$. The empty set is labeled by $\emptyset$ and the empty partial map is indicated by a dotted line between the sets. This graphical representation and the fact that systems of partial maps are logarithmically smaller than the corresponding finite ipo\nbd-semilattices make them a useful tool for investigating finite locally integral ipo\nbd-semilattices.

\nocite{*}
\bibliographystyle{fundam}
\bibliography{bibliography}

\end{document}